\newenvironment{proof}{ {\it Proof.} }{\hfill{\it{QED}}\medskip}
\selectfont\symbol{60}\fontencoding{\encodingdefault}}
\selectfont\symbol{62}\fontencoding{\encodingdefault}}
\selectfont\symbol{124}\fontencoding{\encodingdefault}}
\newcommand{\ave}[1]{\langle #1 \rangle}
\newcommand{\pair}[2]{\langle #1,#2 \rangle}
\newtheorem{lemma}{Lemma}
\newtheorem{theorem}{Theorem}
\newtheorem{definition}{Definition}
\newtheorem{remark}{Remark}
\newtheorem{corollary}{Corollary}
   \def\vec#1{\ensuremath{\mathchoice
                     {\mbox{\boldmath$\displaystyle\mathbf{#1}$}}
                     {\mbox{\boldmath$\textstyle\mathbf{#1}$}}
                     {\mbox{\boldmath$\scriptstyle\mathbf{#1}$}}
                     {\mbox{\boldmath$\scriptscriptstyle\mathbf{#1}$}}}}
   \def\vec#1{\ensuremath{\mathchoice
                     {\mbox{\boldmath$\displaystyle#1$}}
                     {\mbox{\boldmath$\textstyle#1$}}
                     {\mbox{\boldmath$\scriptstyle#1$}}
                     {\mbox{\boldmath$\scriptscriptstyle#1$}}}}
\begin{document}

\begin{frontmatter}
  \title{Higher order Journ\'e commutators and characterizations of multi-parameter BMO}
  
   \author[Brown]{Yumeng Ou\thanksref{YO}}
  \author[IMT]{Stefanie Petermichl\thanksref{SP}\thanksref{fn1}}
  \ead{stefanie.petermichl@gmail.com}
  \ead[url]{http://math.univ-toulouse.fr/{$\tilde{\hspace{0.5em}}$}petermic}
  \author[IMB]{Elizabeth Strouse}

   \thanks[YO]{Research supported in part by NSF-DMS 0901139.}
  \thanks[SP]{Research supported in part by ANR-12-BS01-0013-02. The author is a member of IUF.}
  \thanks[fn1]{Correponding author, Tel:+33 5 61 55 76 59, Fax: +33 5 61 55 83 85}
 
 \address[Brown]{Department of mathematics, Brown University, 151 Thayer Street, Providence RI 02912, USA}
  \address[IMT]{Institut de Math{\'e}matiques de Toulouse, Universit{\'e} Paul Sabatier, Toulouse, France}
   \address[IMB]{Institut de Math{\'e}matiques de Bordeaux, 351 cours de la Lib\'eration, F-33405 Talence, France}
  
  \begin{abstract}
  
    We characterize $L^p$ boundedness of iterated commutators of multiplication by a symbol function and tensor products of Riesz and Hilbert transforms. 
    We obtain a two-sided norm estimate that shows that such operators are bounded on $L^p$ if and only if the symbol belongs to the appropriate multi-parameter BMO class. We extend our results to a much more intricate situation; commutators of multiplication by a symbol function and paraproduct-free Journ\'e operators. We show that the boundedness of these commutators is also determined by the inclusion of their symbol function in the same multi-parameter BMO class. In this sense the tensor products of Riesz transforms are a representative testing class for Journ\'e operators.

  Previous results in this direction do not apply to tensor products
and only to Journ\'e operators which can be reduced to Calder\'on-Zygmund operators. Upper norm estimate of Journ\'e commutators are new even in the case of no iterations. 
Lower norm estimates for iterated commutators only existed when no tensor products were present. In the case of one dimension, lower estimates were known for products of two Hilbert transforms, and without iterations. New methods using Journ\'e operators are developed  to obtain these lower norm estimates in the multi-parameter real variable setting.

      \end{abstract}

  \begin{keyword}
    Iterated commutator, Journ\'e operator, multi-parameter, BMO
  \end{keyword}

\end{frontmatter}

\section{Introduction} \label{s1}

As dual of the Hardy space $H^1$, the classical space of functions of bounded mean oscillation, BMO, arises naturally in many endpoint results in analysis, partial differential equations and probability. When entering a setting with several free parameters, a large variety of spaces are encountered, some of which lose the feature of mean oscillation itself. We are interested in characterizations of multi-parameter BMO spaces through boundedness of commutators.

A classical result of Nehari {\cite{Ne}}  shows that a Hankel
operator with anti-analytic symbol $b$ mapping analytic functions into the
space of anti-analytic functions by $f \mapsto P_{-} b f$ is bounded with respect to the $L^2$ norm if and only if the symbol belongs to BMO. This theorem has an equivalent
formulation in terms of the boundedness of the commutator of the multiplication operator with symbol function $b$ and the Hilbert transform $[ H,b ] =H b-b H.$ 

Ferguson-Sadosky in \cite{FS} and later Ferguson-Lacey in their groundbreaking paper \cite{FL} study the symbols of bounded `big' and `little' Hankel operators on the bidisk through commutators of the tensor product or of the iterated form 
\[ [ H_{1} H_{2} ,b ] , \text{ and } [ H_{1} , [ H_{2} ,b ] ]  .\] Here $b=b ( x_{1} ,x_{2} )$ and the $H_{k}$ are the Hilbert transforms acting in
the $k^{\text{th}}$ variable. A full characterization of different two-parameter BMO spaces, Cotlar-Sadosky's little BMO and Chang-Fefferman's product BMO space, is given through these commutators. 

Through the use of completely different real variable methods, in {\cite{CRW}} Coifman-
Rochberg-Weiss   extended Nehari's one-parameter theory to real
analysis in the sense that the Hilbert transform was replaced by Riesz
transforms. These one-parameter results in {\cite{CRW}}  were treated in the multi-parameter setting in Lacey-Petermichl-Pipher-Wick {\cite{LPPW}}. Both the upper and lower estimate have proofs very different from those in one parameter.  
In addition, in both cases it is observed that the Riesz transforms are a representative testing class in the sense that BMO also ensures boundedness for (iterated) commutators with more general Calderon-Zygmund operators, a result now known in full generality due to Dalenc-Ou \cite{DO}. Notably the Riesz commutator has found striking applications to compensated compactness and div-curl lemmas, \cite{CLMS}, \cite{LPPW3}.

Our extension to the multi-parameter setting is two-fold. On the one hand we replace the Calderon-Zygmund operators by Journ\'e operators $J_i$ and on the other hand we also iterate the commutator: $$[J_1,...,[J_t,b]...].$$ We prove the remarkable fact that a multi-parameter BMO class still ensures boundedness in this situation and that the collection of tensor products of Riesz transforms remains the representative testing class. The BMO class encountered is a mix of little BMO and product BMO that we call a little product BMO. Its precise form depends upon the distribution of variables in the commutator. Our result is new even when no iterations are present: in this case, lower estimates were only known in the case of the double Hilbert transform \cite{FS}. The sufficiency of the little BMO class for boundedness of Journ\'e commutators had never been observed. 

\medskip

It is a general fact that two-sided commutator estimates have an equivalent formulation in terms of weak factorization. We find the pre-duals of our little product BMO spaces and prove a corresponding weak factorization result.

\medskip

Necessity of the little product BMO condition is shown through a lower estimate on the commutator. There is a sharp contrast when tensor products of Riesz transforms are considered instead of multiple Hilbert transforms and  when iterations are present. 

\medskip

In the Hilbert transform case, Toeplitz operators with operator symbol arise naturally.
Using Riesz transforms in $\mathbb{R}^d$ as a replacement, there is an absence of analytic structure and tools relying on analytic projection or orthogonal spaces are not readily available. We overcome part of this difficulty through the use of Calder\'on-Zygmund operators whose Fourier multiplier symbols are adapted to cones. This idea is inspired by {\cite{LPPW}}. Such operators are also mentioned in \cite{U}.
A class of operators of this type classifies little product BMO through two-sided commutator estimates, but it does not allow the passage to a classification through iterated commutators with tensor products of Riesz transforms. In a second step, we find it necessary to consider upper and lower commutator estimates using a well-chosen family of Journ\'e operators that are not of tensor product type. Through geometric considerations and an averaging procedure of zonal harmonics on products of spheres, we construct the multiplier of a special Journ\'e operator that preserves lower commutator estimates and resembles the multiple Hilbert transform: it has large plateaus of constant values and is a polynomial in multiple Riesz transforms. We expect that this construction allows other applications.

 There is an increase in difficulty when the dimension is greater than two, due to the simpler structure of the rotation group on $\mathbb{S}^1$. In higher dimension, there is a rise in difficulty when tensor products involve more than two Riesz transforms.
 
 \medskip

The actual passage to the Riesz transforms requires a stability estimate in commutator norms for certain multi-parameter singular integrals in terms of the mixed BMO class. In this context, we prove a qualitative upper estimate for iterated commutators using paraproduct free Journ\'e operators. We make use of recent versions of $T(1)$ theorems in this setting. These recent advances are different from the corresponding theorem of Journ\'e \cite{J2}. The results we allude to have the additional feature of providing a convenient representation formula for bi-parameter in {\cite{M}} and even multi-parameter in {\cite{O2}} Calder\'on-Zygmund operators by dyadic shifts.

\section{Aspects of Multi-Parameter Theory}

This section contains some review on Hardy spaces in several parameters as well as some new definitions and lemmas relevant to us. 

\subsection{Chang-Fefferman BMO}

We describe the elements of product Hardy space theory, as developed by
Chang and Fefferman as well as Journ{\'e}. By this we mean the Hardy
spaces associated with domains like the poly-disk or $\mathbb{R}^{\vec{d}} :=\bigotimes_{s=1}^{t} \mathbb{R}^{d_{s}}$ for $\vec{d}=(d_1,\ldots , d_t)$.
While doing so, we typically do not distinguish whether we are working on $\mathbb{R}^{\vec{d}}$ or $\mathbb{T}^d$. In higher dimensions, the Hilbert transform is usually replaced by the collection of Riesz transforms. 

The (real) one-parameter Hardy space $H^{1}_{\text{Re}} ( \mathbb{R}^{d} )$  denotes the class
of functions with the norm
\[ \sum_{j=0}^{d} \|{R_{j} f}\|_{1} \]
where $R_{j}$ denotes the $j^{\text{th}}$ Riesz transform or the Hilbert transform if the dimension is one. Here and below we adopt the
convention that $R_{0}$, the $0^{\text{th}}$ Riesz transform, is the identity. This
space is invariant under the one-parameter family of isotropic dilations,
while the product Hardy space $H^{1}_{\text{Re}} ( \mathbb{R}^{\vec{d}} )$ is invariant under dilations of each
coordinate separately. That is, it is invariant under a $t$ parameter family
of dilations, hence the terminology `multi-parameter' theory.
One way to define a norm on $H^{1}_{\text{Re}} ( \mathbb{R}^{\vec{d}} )$ is \[
\|f\|_{H^1}\sim\sum_{0\leq j_l\leq d_l}\|\bigotimes_{l=1}^{t}\mathrm{R}_{l,j_l}f\|_1.
\]
$\mathrm{R}_{l,j_l}$ is the Riesz transform in the $j_l^{\text{th}}$ direction of the $l^{\text{th}}$ variable, and the $0^{\text{th}}$ Riesz transform is the identity operator.

The dual of the real Hardy space $H^{1}_{\text{Re}} ( \mathbb{R}^{\vec{d}} )^{\ast} $ is
$\text{BMO} ( \mathbb{R}^{\vec{d}} )$, the $t$-fold product BMO space. It is a
theorem of S.-Y.~Chang and R.~Fefferman {\cite{CF1}}, {\cite{CF2}} that this space has a
characterization in terms of a product Carleson measure.

Define
\begin{equation}
  \label{e.BMOdef} 
  \lVert b \rVert_{\text{BMO} ( \mathbb{R}^{\vec{d}} )}
  := \sup_{U \subset \mathbb{R}^{\vec{d}}} 
  \left(\vphantom{\sum}\right. |{U}|^{-1} 
  \sum_{R \subset U} 
  \sum_{\vec{\varepsilon} \in { \text{sig}_{\vec{d}}}}
  |{\langle b,w_{R}^{\vec{\varepsilon}} \rangle}|^{2} \left.\vphantom{\sum}\right)^{1/2} .
\end{equation}
Here the supremum is taken over all open subsets $U \subset
\mathbb{R}^{\vec{d}}$ with finite measure, and we use a wavelet basis
$w_{R}^{\vec{\varepsilon}}$ adapted to rectangles $R=Q_{1} \times \cdots \times Q_{t}$, where
each $Q_{l}$ is a cube. The superscript $\vec{\varepsilon}$ reflects the fact that multiple wavelets are associated to any dyadic cube, see \cite{LPPW} for details. 
The fact that the supremum admits all open sets of finite measure cannot be omitted, as Carleson's example shows \cite{C}. This fact is responsible for some of the difficulties encountered when working with this space.
\begin{theorem}
  [Chang, Fefferman]\label{t.changfefferman} We have
  the equivalence of norms
  \[ \|{b}\|_{(H^{1}_{\text{Re}} ( \mathbb{R}^{\vec{d}} ))^{\ast}} \sim
     \|{b}\|_{\text{BMO} ( \mathbb{R}^{\vec{d}} )} .\]
  That is, ${ \text{BMO}} ( \mathbb{R}^{\vec{d}} )$ is the dual to
  $H^{1}_{\text{Re}} ( \mathbb{R}^{\vec{d}} )$.
\end{theorem}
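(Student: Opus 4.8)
The plan is to prove the two norm bounds $\|b\|_{(H^{1}_{\text{Re}}(\mathbb{R}^{\vec{d}}))^{\ast}}\lesssim\|b\|_{\text{BMO}(\mathbb{R}^{\vec{d}})}$ and $\|b\|_{\text{BMO}(\mathbb{R}^{\vec{d}})}\lesssim\|b\|_{(H^{1}_{\text{Re}}(\mathbb{R}^{\vec{d}}))^{\ast}}$ separately, working throughout with the wavelet coefficients $\langle b,w_{R}^{\vec{\varepsilon}}\rangle$ and the discrete product square function $Sf=(\sum_{R}\sum_{\vec{\varepsilon}}|\langle f,w_{R}^{\vec{\varepsilon}}\rangle|^{2}|R|^{-1}\mathbbm{1}_{R})^{1/2}$. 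I take as given two standard pieces of product Hardy space theory: the square function characterization $\|f\|_{H^{1}_{\text{Re}}(\mathbb{R}^{\vec{d}})}\sim\|Sf\|_{1}$, and the density in $H^{1}_{\text{Re}}(\mathbb{R}^{\vec{d}})$ of finite wavelet sums (finitely many nonzero $\langle f,w_{R}^{\vec{\varepsilon}}\rangle$).

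\emph{The functional bound.} For $f$ a finite wavelet sum, expand $\langle f,b\rangle=\sum_{R}\sum_{\vec{\varepsilon}}\langle f,w_{R}^{\vec{\varepsilon}}\rangle\langle b,w_{R}^{\vec{\varepsilon}}\rangle$ and run a stopping-time argument over the level sets $\Omega_{k}=\{Sf>2^{k}\}$. The point specific to the multi-parameter setting is that \eqref{e.BMOdef} can only be tested on open sets, so one replaces $\Omega_{k}$ by its strong-maximal enlargement $\widetilde\Omega_{k}=\{M_{s}\mathbbm{1}_{\Omega_{k}}>1/2\}$: this is open, satisfies $|\widetilde\Omega_{k}|\lesssim|\Omega_{k}|$ (the strong maximal function obeys an $L\log L$ weak-type bound and indicators carry no logarithmic loss), and every relevant $R$ is contained in $\widetilde\Omega_{k}$ for a unique largest $k$. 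Sorting the rectangles into the resulting classes $\mathcal{R}_{k}$ and applying Cauchy--Schwarz on each class gives $\sum_{R\in\mathcal{R}_{k}}\sum_{\vec{\varepsilon}}|\langle f,w_{R}^{\vec{\varepsilon}}\rangle||\langle b,w_{R}^{\vec{\varepsilon}}\rangle|\le A_{k}^{1/2}B_{k}^{1/2}$, with $A_{k}$, $B_{k}$ the corresponding sums of squares of the $f$- and $b$-coefficients. Since $R\subseteq\widetilde\Omega_{k}$ for $R\in\mathcal{R}_{k}$, definition \eqref{e.BMOdef} gives $B_{k}\le\|b\|_{\text{BMO}}^{2}|\widetilde\Omega_{k}|$; since each such $R$ has at least half its mass outside $\Omega_{k+1}$, a one-line self-improvement (the part of $(Sf)^{2}$ coming from $\mathcal{R}_{k}$ has total mass $A_{k}$, at most $\tfrac12 A_{k}$ of it sits over $\Omega_{k+1}$, and off $\Omega_{k+1}$ it is pointwise $\le 2^{2k+2}$) yields $A_{k}\lesssim 2^{2k}|\widetilde\Omega_{k}|$. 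Summing the resulting geometric series, $|\langle f,b\rangle|\lesssim\|b\|_{\text{BMO}}\sum_{k}2^{k}|\widetilde\Omega_{k}|\lesssim\|b\|_{\text{BMO}}\sum_{k}2^{k}|\Omega_{k}|\lesssim\|b\|_{\text{BMO}}\|Sf\|_{1}\sim\|b\|_{\text{BMO}}\|f\|_{H^{1}}$, and density finishes this direction.

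\emph{The Carleson bound.} Given $\Phi\in(H^{1}_{\text{Re}}(\mathbb{R}^{\vec{d}}))^{\ast}$, set $\beta_{R}^{\vec{\varepsilon}}:=\Phi(w_{R}^{\vec{\varepsilon}})$; I show these are the wavelet coefficients of a $\text{BMO}$ function $b$ with $\|b\|_{\text{BMO}}\lesssim\|\Phi\|$. Fix an open $U$ of finite measure and, after truncating to finitely many rectangles, put $g=\sum_{R\subseteq U}\sum_{\vec{\varepsilon}}\beta_{R}^{\vec{\varepsilon}}w_{R}^{\vec{\varepsilon}}$. Because every rectangle appearing lies inside $U$, the square function $Sg$ is supported in $U$, so by Cauchy--Schwarz $\|g\|_{H^{1}}\lesssim\|Sg\|_{1}\le|U|^{1/2}\|Sg\|_{2}=|U|^{1/2}(\sum_{R\subseteq U}\sum_{\vec{\varepsilon}}|\beta_{R}^{\vec{\varepsilon}}|^{2})^{1/2}$. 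On the other hand $\Phi(g)=\sum_{R\subseteq U}\sum_{\vec{\varepsilon}}|\beta_{R}^{\vec{\varepsilon}}|^{2}$ by linearity, whence $\sum_{R\subseteq U}\sum_{\vec{\varepsilon}}|\beta_{R}^{\vec{\varepsilon}}|^{2}=\Phi(g)\le\|\Phi\|\,\|g\|_{H^{1}}\lesssim\|\Phi\|\,|U|^{1/2}(\sum_{R\subseteq U}\sum_{\vec{\varepsilon}}|\beta_{R}^{\vec{\varepsilon}}|^{2})^{1/2}$; cancelling one factor and taking the supremum over $U$ gives exactly the Carleson estimate $\|b\|_{\text{BMO}}\lesssim\|\Phi\|$. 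Finally, by the functional bound $f\mapsto\langle f,b\rangle$ is a bounded functional on $H^{1}_{\text{Re}}(\mathbb{R}^{\vec{d}})$ agreeing with $\Phi$ on the dense set of finite wavelet sums, so $\Phi=\langle\cdot,b\rangle$.

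I expect the real obstacle to be the functional bound, and within it precisely the fact emphasised after \eqref{e.BMOdef} that the supremum is over all open sets and not just rectangles (Carleson's example, \cite{C}). In one parameter a Whitney decomposition of $\Omega_{k}$ into disjoint cubes is enough; here one must instead manufacture a genuine open test set $\widetilde\Omega_{k}$ of comparable measure over which \eqref{e.BMOdef} can be invoked, which is exactly the content of the product Carleson embedding theorem (equivalently, of Journ\'e's covering lemma) underlying Chang and Fefferman's original argument. Once that mechanism is in place, the stopping classes $\mathcal{R}_{k}$, the self-improvement estimate for $A_{k}$, and the convergence $\sum_{k}2^{k}|\Omega_{k}|\lesssim\|Sf\|_{1}$ are routine; the Carleson bound is comparatively soft, using only the trivial direction $\|g\|_{H^{1}}\lesssim\|Sg\|_{1}$ of the square function equivalence.
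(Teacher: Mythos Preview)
The paper does not supply a proof of this theorem; it is stated with attribution to Chang and Fefferman \cite{CF1}, \cite{CF2} and used as background. So there is no proof in the paper to compare your proposal against.

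That said, your argument is the standard wavelet reformulation of Chang--Fefferman's duality and is correct in outline. A couple of small points worth tightening. First, your sorting criterion ``$R\subseteq\widetilde\Omega_{k}$ for the largest $k$'' does imply $|R\cap\Omega_{k+1}|\le|R|/2$, but you should say why: if $R\not\subseteq\widetilde\Omega_{k+1}$ there is $x\in R$ with $M_{s}\mathbbm{1}_{\Omega_{k+1}}(x)\le 1/2$, and testing the strong maximal function on the rectangle $R\ni x$ gives the claim. Second, in the self-improvement step for $A_{k}$ you need that the partial square function built from $\mathcal{R}_{k}$ is supported in $\widetilde\Omega_{k}$ (not just that it is $\le 2^{2k+2}$ off $\Omega_{k+1}$), so that the integral over $\Omega_{k+1}^{c}$ is really $\lesssim 2^{2k}|\widetilde\Omega_{k}|$ rather than over an infinite set; this is immediate from $R\subseteq\widetilde\Omega_{k}$ but should be stated. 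Third, $|\widetilde\Omega_{k}|\lesssim|\Omega_{k}|$ follows most cleanly from the $L^{2}$ boundedness of $M_{s}$ via Chebyshev, which avoids the $L\log L$ digression. Finally, the assertion that $Sg$ is supported in $U$ in the Carleson half assumes the wavelets $w_{R}^{\vec\varepsilon}$ are supported in $R$; this is fine for Haar-type bases but needs a word if one uses smooth wavelets with tails.
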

This $\text{BMO}$ norm is invariant under a $t$-parameter family of dilations. Here
the dilations are isotropic in each parameter separately. See also \cite{F1} and \cite{F3}.

\subsection{Little BMO}
Following {\cite{CS}} and {\cite{FS}}, we recall some facts about the space little BMO, often written as `bmo',  and its predual.  A locally integrable function $b: \mathbb{R}^{\vec{d}}=\mathbb{R}^{d_1}\times \ldots \times \mathbb{R}^{d_s} \to \mathbb{C}$ is in
$\text{bmo}$ if and only if
\[ \| b \|_{\text{bmo}} = \sup_{\vec{Q} =Q_{1} \times \cdots \times Q_{s}} |
   \vec{Q} |^{-1} \int_{\vec{Q}} | b ( \vec{x} ) -b_{\vec{Q}} | < \infty \]
Here the $Q_{k}$ are $d_k$-dimensional cubes and $b_{\vec{Q}}$ denotes the
average of $b$ over $\vec{Q}$.

It is easy to see that this space consists of all
functions  that are uniformly in BMO in each variable
separately. Let $\vec{x}_{\hat{v}} = ( x_{1} , \ldots
.,x_{v-1} , \cdot , x_{v+1},\ldots ,x_{s} )$. Then $b(\vec{x}_{\hat{v}})$ is a function in $x_v$ only with the other variables fixed. Its BMO norm in $x_v$ is
\[ \| b ( \vec{x}_{\hat{v}} ) \|_{\text{BMO}} = \sup_{Q_{v}} | Q_{v}
   |^{-1} \int_{Q_{v}} | b ( \vec{x} ) -b ( \vec{x}_{\hat{v}}
   )_{Q_{v}} |dx_v \]
and the little BMO norm becomes 
$$\| b \|_{\text{bmo}} = \max_{v} \{
\sup_{\vec{x}_{\hat{v}}} \| b ( \vec{x}_{\hat{v}} )
\|_{\text{BMO}} \} .$$ On the bi-disk, this becomes $$\|b\|_{\text{bmo}}=\max \{\sup_{x_1}\|b(x_1,\cdot)\|_{\text{BMO}}, \sup_{x_2}\|b(\cdot,x_2)\|_{\text{BMO}}\},$$ the space discussed in {\cite{FS}}.
Here, the pre-dual is the space $H^1(\mathbb{T})\otimes L^1(\mathbb{T})+ L^1(\mathbb{T})\otimes H^1(\mathbb{T})$. All other cases are an obvious generalization, at the cost of notational inconvenience.

\subsection{Little product BMO}

In this section we define a BMO space which is in between little BMO and product BMO. As mentioned in the introduction, we aim at characterizing BMO spaces consisting for example of those functions $b(x_1,x_2,x_3)$ such that $b(x_1,\cdot,\cdot)$ and $b(\cdot,\cdot,x_3)$ are uniformly in product BMO in the remaining two variables. 
\begin{definition}\label{definitionlpbmo}
Let $b : \mathbb{R}^{\vec{d}} \to \mathbb{C}$ with $\vec{d}=(d_1,\cdots,d_t)$. Take a partition $\mathcal{I}=\{I_s:1\le s\le l\}$ of $\{1,2,...,t\}$ so that $\dot{\cup}_{1\le s \le l} I_s=\{1,2,...,t\}$.  We say that $b \in \text{BMO}_{\mathcal{I}}(\mathbb{R}^{\vec{d}})$ if for any choices ${\boldsymbol{v}}=(v_s), v_s\in I_{s}$, $b$ is uniformly in product BMO in the variables indexed by ${v_s}$.
We call a $\text{BMO}$  space of this type a `little product BMO'. If for any $\vec{x}=(x_1,...,x_t) \in \mathbb{R}^{\vec{d}}$, we define $\vec{x}_{\hat{\boldsymbol{v}}}$ by removing those variables indexed by ${v_s}$, the little product BMO norm becomes $$\|b\|_{\text{BMO}_{\mathcal{I}}}=\max_{{\boldsymbol{v}}} \{\sup_{\vec{x}_{\hat{\boldsymbol{v}}}}\|b(\vec{x}_{\hat{\boldsymbol{v}}})\|_{\text{BMO}}\}$$ where the BMO norm is product BMO in the variables indexed by ${v_s}$. 
\end{definition}
For example, when $\vec{d}=(1,1,1)=\vec{1}$,  when $t=3$ and $l=2$ with $I_1=(13)$ and $I_2=(2)$, writing $\mathcal{I}=(13)(2)$ the space  $\text{BMO}_{(13)(2)}(\mathbb{T}^{\vec{1}})$ arises,
which consists  of those functions that are uniformly in product BMO in the 
variables $(1,2)$ and $(3,2)$ respectively, as described above.  
Moreover, as degenerate cases, it is easy 
to see that $\text{BMO}_{(12\ldots t)}$ and $\text{BMO}_{(1)(2)\ldots (t)}$ are 
exactly little BMO and product BMO respectively,  the spaces we are familiar 
with.

Little product BMO spaces on $\mathbb{T}^{\vec{d}}$ can be defined in the same way. 
Now we find the predual of $\text{BMO}_{(13)(2)},$ which is a good model for other cases. We choose the order of variables most convenient for us.
\begin{theorem}\label{predual}
The pre-dual of the space $\text{BMO}_{(13)(2)}(\mathbb{T}^{\vec{1}})$ is equal to the 
space 
\[
\begin{split}
&H^1_{\text{Re}}(\mathbb{T}^{(1,1)})\otimes L^1(\mathbb{T})+L^1(\mathbb{T})\otimes 
H^1_{\text{Re}}(\mathbb{T}^{(1,1)})\\
&:=\{f+g: f\in H^{1}_{\text{Re}} ( {\mathbb{T}}^{(1,1)}) \otimes
 L^{1} ( \mathbb{T} ) \text{ and }  g \in L^{1} ( \mathbb{T} ) \otimes 
H^{1}_{\text{Re}} ( {\mathbb{T}}^{(1,1)})\}.
\end{split}
\]
\end{theorem}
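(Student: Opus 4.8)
The statement is a duality assertion, so the plan is to show that $X^{\ast}=\text{BMO}_{(13)(2)}(\mathbb{T}^{\vec{1}})$ with equivalent norms, where $X$ denotes the displayed sum space, equipped with $\|h\|_{X}=\inf\{\|f\|+\|g\|:h=f+g\}$, and where each summand is read as the mixed-norm space $L^{1}(\mathbb{T}_{x_{3}};H^{1}_{\text{Re}}(\mathbb{T}^{2}_{x_{1},x_{2}}))$, respectively $L^{1}(\mathbb{T}_{x_{1}};H^{1}_{\text{Re}}(\mathbb{T}^{2}_{x_{2},x_{3}}))$. First I would record the easy inclusion $\text{BMO}_{(13)(2)}\subseteq X^{\ast}$: given $b$ in the little product BMO space and a decomposition $h=f+g$, write $\langle b,f\rangle=\int_{\mathbb{T}}\langle b(\cdot,\cdot,x_{3}),f(\cdot,\cdot,x_{3})\rangle\,dx_{3}$ and apply the product $H^{1}$--product BMO duality of Chang--Fefferman (Theorem~\ref{t.changfefferman}) in the $(x_{1},x_{2})$ fibres to get $|\langle b,f\rangle|\le\big(\sup_{x_{3}}\|b(\cdot,\cdot,x_{3})\|_{\text{BMO}}\big)\,\|f\|_{L^{1}(\mathbb{T};H^{1}_{\text{Re}})}$, and symmetrically for $g$; taking the infimum over decompositions yields $\|b\|_{X^{\ast}}\lesssim\|b\|_{\text{BMO}_{(13)(2)}}$.

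For the reverse inclusion, take $\Lambda\in X^{\ast}$. Since trigonometric polynomials are dense in each summand, hence in $X$, $\Lambda$ is represented by a distribution $b$ on $\mathbb{T}^{3}$. Restrict $\Lambda$ to the first summand and test it against elementary tensors $f\otimes g$ with $g\in L^{1}(\mathbb{T})$ fixed: the functional $f\mapsto\Lambda(f\otimes g)$ is bounded on $H^{1}_{\text{Re}}(\mathbb{T}^{2})$, so Theorem~\ref{t.changfefferman} produces $\beta_{g}=\int_{\mathbb{T}}b(\cdot,\cdot,x_{3})g(x_{3})\,dx_{3}\in\text{BMO}(\mathbb{T}^{2})$ with $\|\beta_{g}\|_{\text{BMO}}\le\|\Lambda\|\,\|g\|_{1}$. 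Letting $g$ range over $L^{1}$-normalised functions and using separability of $H^{1}_{\text{Re}}(\mathbb{T}^{2})$ to interchange the essential supremum in $x_{3}$ with a supremum over a countable norming family, I would upgrade this to $\sup_{x_{3}}\|b(\cdot,\cdot,x_{3})\|_{\text{BMO}}\le\|\Lambda\|$ (in the essential sense, which is the meaning of the supremum in Definition~\ref{definitionlpbmo} for the class at hand). Restricting $\Lambda$ to the second summand yields in exactly the same way $\sup_{x_{1}}\|b(x_{1},\cdot,\cdot)\|_{\text{BMO}}\le\|\Lambda\|$. Hence $b\in\text{BMO}_{(13)(2)}$ with $\|b\|_{\text{BMO}_{(13)(2)}}\lesssim\|\Lambda\|$, which together with the previous paragraph identifies $X^{\ast}$ with $\text{BMO}_{(13)(2)}$; thus $X$ is the predual.

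The conceptual content is merely that the \emph{sum} structure of the predual is dual to the \emph{max}, i.e.\ intersection, structure of little product BMO, with Chang--Fefferman supplying the fibrewise duality, and the remaining cases follow by the same scheme with heavier notation. The genuinely fiddly points, which I expect to be the only real obstacle, are: (i) pinning down the precise meaning of the tensor-product summands and checking that $X$ is complete and that trigonometric polynomials are dense in it; (ii) the vector-valued duality $(L^{1}(\mu;H^{1}_{\text{Re}}))^{\ast}=L^{\infty}(\mu;\text{BMO})$, which is delicate because $\text{BMO}$ is non-separable and lacks the Radon--Nikodym property, so one must argue through weak-$\ast$ measurable representatives---legitimate since $H^{1}_{\text{Re}}$ is separable---and then run the countable-norming-family step to pass from ``essentially bounded $\text{BMO}$-valued'' to the pointwise uniform product-BMO bound of Definition~\ref{definitionlpbmo}; and (iii) checking that the two $\text{BMO}$-valued representations obtained from the two summands are the same distribution $b$, which is immediate because both compute $\Lambda$ on the common dense set of trigonometric polynomials.
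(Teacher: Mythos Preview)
Your argument is correct, but it proceeds along a different axis than the paper's. You run a direct duality computation: the easy inclusion via fibrewise Chang--Fefferman pairing, and the hard inclusion via the vector-valued duality $(L^{1}(\mu;H^{1}_{\text{Re}}))^{\ast}\simeq L^{\infty}_{w^{\ast}}(\mu;\text{BMO})$, which you rightly flag as the delicate point because $\text{BMO}$ lacks the Radon--Nikodym property. The paper sidesteps this entirely. It uses the Hilbert-transform definition of $H^{1}_{\text{Re}}(\mathbb{T}^{(1,1)})\otimes L^{1}(\mathbb{T})$ to embed that space isometrically as a closed subspace $V=\{(f,H_{1}f,H_{2}f,H_{1}H_{2}f)\}$ of $W^{1}=(L^{1}(\mathbb{T}^{3}))^{4}$; then $V^{\ast}\cong W^{\infty}/U$ with $U$ the annihilator, and since $U=\ker\theta$ for $\theta(g_{1},g_{2},g_{3},g_{4})=g_{1}+H_{1}g_{2}+H_{2}g_{3}+H_{1}H_{2}g_{4}$, the dual is identified with $L^{\infty}+H_{1}L^{\infty}+H_{2}L^{\infty}+H_{1}H_{2}L^{\infty}$, i.e.\ functions uniformly in product BMO in $(x_{1},x_{2})$. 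The sum space is then handled by a second annihilator argument on the ``$L^{1}$ sum'' $M_{(13)(2)}$, whose dual is the diagonal $\{(\phi,\phi)\}$, i.e.\ the intersection of the two fibrewise product BMO conditions.

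What each approach buys: the paper's route needs only the scalar duality $(L^{1})^{\ast}=L^{\infty}$ and elementary quotient/annihilator bookkeeping, at the cost of invoking the $L^{\infty}+\sum H_{i}L^{\infty}$ description of product BMO; it generalises to $\mathbb{R}^{\vec d}$ by replacing Hilbert transforms with all Riesz transforms, as the paper remarks. Your route is conceptually cleaner---sum of preduals is predual of intersection, with Chang--Fefferman doing the fibrewise work---but the price is the weak-$\ast$ measurable representation theorem for $L^{1}(\mu;E)^{\ast}$ with $E$ separable, which is standard but heavier machinery than anything the paper invokes. Your points (i)--(iii) are exactly the right checklist, and none of them fails.
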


\begin{proof}
  The space 
  $$H^{1}_{\text{Re}} ({ \mathbb{T}}^{(1,1)} ) \otimes L^{1} (\mathbb{T} ) = 
\{ f \in L^{1} ( \mathbb{T}^{3} ) :H_{1} f,H_{2} f, H_{1}H_{2} f \in 
L^{1} ( \mathbb{T}^{3} ) \}$$ 
  equipped with the norm 
  $\| f \| = \| f \|_{1} + \| H_{1} f \|_{1} + \| H_{2} f \|_{1} + \| 
H_{1} H_{2} f \|_{1}$
   is a Banach space. Let $W^1 = L^1(\mathbb{T}^3) \times 
L^1(\mathbb{T}^3) \times L^1(\mathbb{T}^3) \times L^1(\mathbb{T}^3) $  
equipped with the norm 
  $$\| (f_1, f_2 , f_3 , f_4) \|_{W_1}  = \| f _1\|_1 +   \| f _2\|_1 +   
\| f _3\|_1 +   \| f _4\|_1. $$
  Then we see that  $H^{1}_{\text{Re}} ( {\mathbb{T}}^{(1,1)}) \otimes L^{1} 
(\mathbb{T} )$ is isomorphically isometric to the closed subspace
  $$V = \{ (f, H_1(f) , H_2(f) , H_1 H_2(f)) : f\in H^1({\mathbb{T}}^{(1,1)}) 
\otimes L^1 ({\mathbb{T}})\} $$ 
  of $W^1.$ Now, the dual of $W^1$ is equal
  to $W^\infty = L^\infty(\mathbb{T}^3) \times L^\infty(\mathbb{T}^3) \times 
L^\infty(\mathbb{T}^3) \times L^\infty(\mathbb{T}^3)$ equipped with the norm
  $\|(g_1 , g_2 , g_3, g_4 )\|_\infty = \max \{ \|g_i \|_\infty 
: 1\le i \le 4 \}$ so the dual space of $V$ is equal to the quotient of 
$W^\infty$ by the annihilator $U$ of the subspace $V$ in $W^\infty.$ But, using the fact 
that the Hilbert transforms are self-adjoint up to a sign change, we see that  
  $$U = \{ (g_1, g_2 , g_3 , g_4) : g_1 + H_1 g_2 + H_2 g_3 + H_1 H_2 g_4 
= 0 \}$$ 
 and so:
$$V^\ast \cong W^\infty / U \cong \text{Im}(\theta )$$
where 
$$\theta (g_1 , g_2 ,g_3 , g_4) =g_1 + H_1 g_2 + H_2 g_3 + H_1 H_2 g_4 $$ since 
$U = \text{ker}(\theta ).$ But 
$$\text{Im}(\theta ) = L^\infty(\mathbb{T}^3) + H_1( L^\infty(\mathbb{T}^3))+
H_2(L^\infty(\mathbb{T}^3)) + H_1(H_2( L^\infty(\mathbb{T}^3)))$$ 
is equal to
the functions that are uniformly in product
  BMO in variables 1 and 2.

  Using the same reasoning we see that the dual of $L^1(\mathbb{T}) \otimes 
H^{1}_{\text{Re}} ({ \mathbb{T}}^{(1,1)} )$ is equal to $L^\infty(\mathbb{T}^3) + 
H_2(L^\infty(\mathbb{T}^3) ) + H_3 (L^\infty(\mathbb{T}^3) ) + 
H_2 H_3 (L^\infty(\mathbb{T}^3) )$,
  which is equal to the space of functions that are uniformly in product
  BMO in variables 2 and 3.  
  
  Now, we consider the `$L^1$ sum' of the spaces 
$ H^{1}_{\text{Re}} ({ \mathbb{T}}^{(1,1)} ) \otimes L^1(\mathbb{T})$ and 
$L^1(\mathbb{T}) \otimes H^{1}_{\text{Re}} ({ \mathbb{T}}^{(1,1)} )$; that is 
$$M_{(13)(2)} = \{(f,g): f \in  H^{1}_{\text{Re}} ({ \mathbb{T}}^{(1,1)} ) \otimes L^1(\mathbb{T});
g \in L^1(\mathbb{T}) \otimes H^{1}_{\text{Re}} ({ \mathbb{T}}^{(1,1)} )\}$$
equipped with the norm
$$\| (f,g) \| = \|f \|_{H^{1}_{\text{Re}} ({ \mathbb{T}}^{(1,1)} ) \otimes L^1(\mathbb{T})} + \| g \|_{L^1(\mathbb{T}) \otimes 
H^{1}_{\text{Re}} ({ \mathbb{T}}^{(1,1)} )}.
$$
We see that, if $\phi : M_{(13)(2)} \to L^1( ( \mathbb{T}^{3} )$ is defined
by $\phi (f,g) = f + g,$ then the image of $\phi$ is isometrically 
isomorphic to the quotient of $M_{(13)(2)}$ by the space 
\[
\begin{split}
N &= \{ (f,g) \in  M_{(13)(2)} : f + g = 0 \} \\
&= \{ (f,-f) : f \in 
H^{1}_{\text{Re}} ({ \mathbb{T}}^{(1,1)} ) \otimes L^1(\mathbb{T}) 
\cap L^1(\mathbb{T}) \otimes H^{1}_{\text{Re}} ({ \mathbb{T}}^{(1,1)} )\}.
\end{split}
\]
Now, recall that the dual of the quotient $M/N$ is equal to the annihilator of $N.$ 
It is easy to see that the annihilator of $N$ is equal to the set of ordered pairs 
$(\phi ,\phi)$ with 
$\phi$ in the intersection of the duals of the two spaces. Thus 
the dual of the image of $\theta$ is equal to  $\text{BMO}_{( 13 ) (2)}.$ The norm
of an element in the predual is equal to its norm as an element of the 
double dual which is easily computed.
\end{proof}

Following this example, the reader may easily find the correct formulation for the predual of other little product BMO spaces as well those in several variables, replacing the Hilbert transform by all choices of Riesz transforms. For instance, one can prove that the predual of the space $\text{BMO}_{(13)(2)}(\mathbb{R}^{\vec{d}})$ is equal to $H^1_{\text{Re}}(\mathbb{R}^{(d_1,d_2)})\otimes L^1(\mathbb{R}^{d_3})+L^1(\mathbb{R}^{d_1})\otimes H^1_{\text{Re}}(\mathbb{R}^{(d_2,d_3)})$.

\section{The Hilbert transform case}

In this section, we characterize the boundedness of commutators of the form $ [ H_{2} , [ H_{3} H_{1} ,b ] ]$ as  operators on 
$L^{2} ( \mathbb{T}^{3} )$. In the case of the Hilbert transform, this case is representative of the general case and provides a starting point that is easier to read because of the simplicity of the expression of products and sums of projection onto orthogonal subspaces. Its general form can be found at the beginning of Section \ref{section_riesz}.

Now let $b\in L^1 ({\mathbb{T}}^n)$ and let $P$ and $Q$ denote orthogonal projections onto subspaces of $L^2(\mathbb{T}^n)$. We shall describe relationships between functions in the little product BMOs and several types of projection-multiplication operators. These will be Hilbert transform-type operators of the form $P - P^{\perp}$;  and iterated Hankel or Toeplitz type operators of the form 
$Q^\perp b Q$ (Hankel), $P b P$ (Toeplitz),$ P Q^\perp b Q P$ (mixed), where $b$ means the (not a priori bounded) multiplication operator $M_b$ on $L^2(\mathbb{T}^n)$.  

We shall use the following simple observation concerning Hilbert transform type operators again and again:

\begin{remark}\label{Hilbobs}
 If  $H = P - P^\perp$  and $T:L^2(\mathbb{T}^n) \to L^2(\mathbb{T}^n) $ is a linear operator  then 
 $$ [H,T] = 2PTP^\perp - 2P^\perp T P$$
  and $H$ is bounded if and only if $PTP^\perp$ and $P^\perp T P$ are.
\end{remark}
 
\begin{proof}
 \begin{align*} 
 (P - P^\perp)T - T(P - P^\perp) &= (P - P^\perp)T(P + P^\perp) - (P + P^\perp)T(P - P^\perp)\\
 & = 2PTP^\perp - 2P^\perp T P.
 \end{align*}
\end{proof} 

We state the main result of this section. 
\begin{theorem}\label{hilbthm}
  Let $b \in L^{1} ( \mathbb{T}^{3} )$. Then the following are equivalent with linear dependence on the respective norms
  \begin{enumerate}[(1)]
    \item $b \in \text{BMO}_{( 13 ) (2)}$
    
    \item The commutators $[ H_{2} , [ H_{1} ,b ] ]$ and $[ H_{2} , [ H_{3} ,b
    ] ]$ are bounded on $L^{2} ( \mathbb{T}^{3} )$
    
    \item The commutator $ [ H_{2} , [ H_{3} H_{1} ,b ] ]$ is bounded on $L^{2} ( \mathbb{T}^{3} )$.
  \end{enumerate}
\end{theorem}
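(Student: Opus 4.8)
The plan is to prove the cycle of implications $(1)\Rightarrow(3)\Rightarrow(2)\Rightarrow(1)$, exploiting the orthogonal projection formalism of Remark~\ref{Hilbobs} throughout. Write $H_k=P_k-P_k^\perp$ where $P_k$ is the analytic (Riesz) projection in the $k^{\text{th}}$ variable; since these act on disjoint variables they all commute with one another. Iterating Remark~\ref{Hilbobs}, the double commutator $[H_2,[H_3H_1,b]]$ (and likewise $[H_2,[H_1,b]]$, $[H_2,[H_3,b]]$) expands into a signed sum of $16$ (resp.\ $8$) ``generalized mixed Hankel--Toeplitz'' terms of the shape $\pm 4\,P_2^{\sharp}P_3^{\sharp}P_1^{\sharp}\,b\,P_1^{\sharp}P_3^{\sharp}P_2^{\sharp}$, where each $P^{\sharp}$ is either $P$ or $P^\perp$ and the two outer groups match up in the $2$-variable but are ``split'' across $P_3P_1$ versus $P_3^\perp P_1^\perp$, etc. The point of working on $\mathbb{T}^3$ with Hilbert transforms rather than with Riesz transforms in $\mathbb{R}^{\vec d}$ is precisely that these products and sums of projections onto orthogonal subspaces are transparent; this is the simplification advertised in the section's opening paragraph.

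For $(1)\Rightarrow(3)$: by Theorem~\ref{predual} the space $\text{BMO}_{(13)(2)}$ is the dual of $H^1_{\text{Re}}(\mathbb{T}^{(1,1)})\otimes L^1(\mathbb{T})+L^1(\mathbb{T})\otimes H^1_{\text{Re}}(\mathbb{T}^{(1,1)})$, so it suffices to show that for each of the $16$ terms $P_2^{\sharp}P_3^{\sharp}P_1^{\sharp}\,b\,P_1^{\sharp}P_3^{\sharp}P_2^{\sharp}$, testing against $f\otimes g$ (or sums thereof) against an $L^2$ function controls it by $\|b\|_{\text{BMO}_{(13)(2)}}\|f\|_2\|g\|_2$. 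Concretely one pairs $\langle (\text{term})f,g\rangle$, moves the $2$-variable projections onto $g$, and is left with an inner integral that for each fixed $x_2$ is a product-BMO (Chang--Fefferman) pairing in the variables $(1,3)$ against a product $H^1$ function built from $f(\cdot,x_2,\cdot)$ and $g(\cdot,x_2,\cdot)$; one then integrates in $x_2$ and applies Cauchy--Schwarz. This is the Ferguson--Lacey mechanism adapted to the mixed class, using the weak-factorization/$H^1$ duality built into Theorem~\ref{predual}. The implication $(3)\Rightarrow(2)$ is the ``commutation trick'': since $[H_2,[H_3,b]]$ can be recovered from $[H_2,[H_3H_1,b]]$ by post- and pre-composing with $H_1$ (using $H_1^2=I$, $H_1$ bounded, and that $H_1$ commutes with $H_2,H_3$, so $H_1[H_2,[H_3H_1,b]]H_1=[H_2,[H_3,b]]$ up to the identity $H_1bH_1$ contributing nothing to the commutator structure), boundedness of the product commutator forces boundedness of each ``sub-iterated'' commutator; symmetrically for $[H_2,[H_1,b]]$ one multiplies by $H_3$ instead. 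Care is needed because $b$ is only in $L^1$, not a bounded multiplier a priori, so these manipulations must be read as identities of sesquilinear forms on a dense class (say $C^\infty$ functions), which is standard.

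For $(2)\Rightarrow(1)$, the lower bound, one shows the two hypotheses give respectively that $b(\cdot,x_2,\cdot)$ is uniformly in product BMO$(x_1,x_3)$ --- wait, more precisely the first gives $b$ uniformly in product BMO in variables $(1,2)$ and the second in variables $(3,2)$, which by Definition~\ref{definitionlpbmo} (with $\mathcal I=(13)(2)$, so $\boldsymbol v\in\{(1,2),(3,2)\}$ after choosing $v_1\in\{1,3\}$, $v_2=2$) is exactly membership in $\text{BMO}_{(13)(2)}$. To get ``$b$ uniformly in product BMO in variables $(1,2)$'' from boundedness of $[H_2,[H_1,b]]$ one freezes $x_3$ and runs the one-shot two-parameter lower estimate: test the commutator on cleverly chosen ``bump'' or exponential inputs localized to a rectangle (or open set) $U\subset\mathbb{T}^2$ in the $(1,2)$ variables times a mollifier in $x_3$, and read off the product Carleson condition \eqref{e.BMOdef}; this is the Ferguson--Sadosky argument in its two-Hilbert-transform form, which the introduction notes is already known.

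I expect the main obstacle to be the lower estimate $(2)\Rightarrow(1)$, specifically verifying the full Chang--Fefferman product-Carleson supremum over \emph{all} open sets $U$ (not merely rectangles) in the relevant two variables uniformly in the frozen third variable --- this is where the genuine two-parameter difficulty lives, and where Carleson's counterexample shows one cannot cut corners. The other delicate point, more technical than deep, is the justification of all the projection-algebra identities when $b$ is merely integrable: one should fix once and for all that all commutator expressions are a priori defined as bilinear forms on $C^\infty(\mathbb{T}^3)\times C^\infty(\mathbb{T}^3)$, that the $16$-term expansion and the conjugation-by-$H_1$ identities hold there, and that ``bounded on $L^2$'' means the form extends boundedly --- after which everything above goes through verbatim.
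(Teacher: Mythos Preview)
Your implication $(3)\Rightarrow(2)$ contains a genuine gap. The conjugation identity you invoke, $H_1[H_2,[H_3H_1,b]]H_1=[H_2,[H_3,b]]$, is false: since $H_1$ commutes with $H_2,H_3$ but not with $b$, one has
\[
H_1[H_2,[H_3H_1,b]]H_1=[H_2,\,H_1(H_3H_1 b-bH_3H_1)H_1]=[H_2,\,H_3 b H_1-H_1 b H_3],
\]
which is not $[H_2,[H_3,b]]=[H_2,\,H_3 b-bH_3]$; the hedge ``$H_1bH_1$ contributes nothing to the commutator structure'' does not repair this. More to the point, no purely algebraic manipulation can work here: the Leibniz identity $[H_2,[H_1H_3,b]]=H_1[H_2,[H_3,b]]+[H_2,[H_1,b]]H_3$ shows that the tensor-product commutator is a genuine mixture of both small commutators, and solving for one requires already knowing the other. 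The paper's argument for $(3)\Rightarrow(2)$ is analytic, not algebraic: it passes to the Hankel blocks of Lemma~\ref{hilblem} and then invokes Lemma~\ref{ToeplitzLemma2}, which says that $\|P_3 P_1^\perp P_2^\perp\, b\, P_1 P_2 P_3\|=\|P_1^\perp P_2^\perp\, b\, P_1 P_2\|$ because a Toeplitz operator (in the variable $x_3$) with operator-valued symbol has norm equal to the supremum norm of its symbol. This Toeplitz fact is the missing idea in your proposal.

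A secondary issue: your sketch of $(1)\Rightarrow(3)$ claims that for fixed $x_2$ one obtains a product-BMO pairing in variables $(1,3)$, but $\text{BMO}_{(13)(2)}$ gives no such control --- by Definition~\ref{definitionlpbmo} the partition $\mathcal I=(13)(2)$ yields uniform product BMO only in variables $(1,2)$ and $(3,2)$, never $(1,3)$. The paper instead proves $(1)\Leftrightarrow(2)$ directly (via the multiplication-operator reduction you also use for $(2)\Rightarrow(1)$, combined with the Ferguson--Lacey two-sided estimate) and then obtains $(2)\Rightarrow(3)$ from the Leibniz identity above. Your $(2)\Rightarrow(1)$ is essentially correct and matches the paper; the obstacle you anticipate there (the Carleson open-set supremum) is absorbed into the Ferguson--Lacey black box and is not where the real difficulty lies in this theorem.
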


\begin{corollary}\label{hilbest}
We have the following two-sided estimate 
\[ \|b\|_{\text{BMO}_{(13)(2)}}\lesssim\| [ H_{2} , [ H_{3} H_{1} ,b ] ]\|_{L^{2} ( \mathbb{T}^{3} )\to L^{2} ( \mathbb{T}^{3} )}\lesssim\|b\|_{\text{BMO}_{(13)(2)}}.\]
\end{corollary}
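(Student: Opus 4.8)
The plan is to derive Corollary~\ref{hilbest} directly from Theorem~\ref{hilbthm}, using the fact that the equivalences there are quantitative (``with linear dependence on the respective norms''). Since the corollary is a two-sided norm estimate for the single iterated commutator $[H_2,[H_3H_1,b]]$, only the equivalence $(1)\Longleftrightarrow(3)$ of the theorem is needed, in its quantitative form.

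First I would record the upper bound. By the implication $(1)\Rightarrow(3)$ of Theorem~\ref{hilbthm} with its linear norm dependence, membership $b\in\text{BMO}_{(13)(2)}$ gives
\[
\|[H_2,[H_3H_1,b]]\|_{L^2(\mathbb{T}^3)\to L^2(\mathbb{T}^3)}\lesssim \|b\|_{\text{BMO}_{(13)(2)}},
\]
which is the right-hand inequality. For the left-hand inequality I would invoke the reverse implication $(3)\Rightarrow(1)$, again quantitatively: boundedness of $[H_2,[H_3H_1,b]]$ forces $b\in\text{BMO}_{(13)(2)}$ with
\[
\|b\|_{\text{BMO}_{(13)(2)}}\lesssim \|[H_2,[H_3H_1,b]]\|_{L^2(\mathbb{T}^3)\to L^2(\mathbb{T}^3)}.
\]
Concatenating the two displays yields exactly the claimed chain of inequalities, so the corollary is immediate once Theorem~\ref{hilbthm} is proved in its quantitative form.

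The only subtlety — and the reason this is stated as a corollary rather than a trivial restatement — is making sure the constants are genuinely two-sided and do not secretly pass through an intermediate quantity (such as the norms in statement $(2)$) in a way that loses control. The direction $(3)\Rightarrow(1)$ is the substantive one: the lower estimate on the commutator. I expect the main obstacle, already handled in the proof of Theorem~\ref{hilbthm}, to be extracting the product-BMO norm of each slice $b(\cdot,x_2,\cdot)$ and $b(x_1,x_2,\cdot)$ from the operator norm of the single iterated commutator, using Remark~\ref{Hilbobs} to rewrite $[H_2,[H_3H_1,b]]$ in terms of the projection operators $PTP^\perp$, $P^\perp TP$ and isolating the relevant Toeplitz/Hankel-type compressions, then applying the one- and two-parameter lower commutator bounds (Nehari, Ferguson--Lacey). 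Since all of that is internal to Theorem~\ref{hilbthm}, the corollary itself requires no further work beyond citing the theorem; I would keep the proof to the two-line concatenation above.
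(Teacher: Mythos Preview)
Your proposal is correct and matches the paper's approach: the corollary is stated immediately after Theorem~\ref{hilbthm} without separate proof, precisely because the quantitative equivalence $(1)\Leftrightarrow(3)$ in that theorem \emph{is} the two-sided estimate. One small slip in your commentary: the relevant slices for $\text{BMO}_{(13)(2)}$ are $b(\cdot,\cdot,x_3)$ and $b(x_1,\cdot,\cdot)$ (each uniformly in product BMO in the remaining two variables), not $b(\cdot,x_2,\cdot)$ or $b(x_1,x_2,\cdot)$; this does not affect your argument since you correctly defer that analysis to the proof of Theorem~\ref{hilbthm}.
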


 It will be useful to denote by $Q_{13}$ orthogonal projection on the subspace of functions which are either analytic or anti-analytic in the first and third variables;
$Q_{13} = P_1P_3  + P_1^\perp P_3^\perp$. Then the projection $Q_{13}^\perp $ onto the orthogonal of this subspace is defined by 
$Q_{13}^\perp = P_1^\perp P_3 + P_1 P_3^\perp.$  We reformulate properties 
{\it (2)} and {\it (3)} in the statement of Theorem \ref{hilbthm} in terms of Hankel Toeplitz type operators.

\begin{lemma}
  \label{hilblem}
  We have the following algebraic facts on commutators and projection operators.
  \begin{enumerate}[(1)]
   \item The commutators $[ H_{2} , [ H_{1} ,b ] ]$ and $[ H_{2} , [ H_{3} ,b
    ] ]$ are bounded on $L^{2} ( \mathbb{T}^{3} )$ if and only if  the operators
     $P_{i} P_{2} b P^{\perp}_{i} P^{\perp}_{2} ,
     P^{\perp}_{i} P_{2} b P_{i} P^{\perp}_{2} ,
     P_{i} P^{\perp}_{2} b P^{\perp}_{i} P_{2},
     P^{\perp}_{i} P^{\perp}_{2} b P_{i} P_{2}$
    with $i \in \{ 1,3 \}$ are bounded on $L^{2} ( \mathbb{T}^{3} )$.
    
   \item The commutator $ [ H_{2} , [ H_{3} H_{1} ,b ] ]
    $ is bounded on $L^{2} ( \mathbb{T}^{3} )$ if and only if all
    four operators
    $P_{2} Q_{13} b Q^{\perp}_{13} P^{\perp}_{2},
    P^{\perp}_{2} Q^{\perp}_{13} b Q_{13} P_{2} ,
    P_{2} Q^{\perp}_{13}b Q_{13} P^{\perp}_{2} ,
    P^{\perp}_{2} Q_{13} b Q^{\perp}_{13}P_{2}$  are bounded
    on $L^{2} ( \mathbb{T}^{3} )$. 

   \end{enumerate}
\end{lemma}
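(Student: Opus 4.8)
The plan is to reduce everything to the simple commutator identity in Remark~\ref{Hilbobs} and apply it twice, once for each Hilbert transform appearing in the iterated commutator. The key observation is that $H_2$ and $Q_{13}$ (hence $H_1$, $H_3$, and $H_3H_1$) act in disjoint sets of variables, so the associated projections commute with one another; this is what makes the bookkeeping manageable.

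\medskip

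\textbf{Part (1).} First I would write $[H_2,[H_1,b]]$. By Remark~\ref{Hilbobs} applied with $H=H_1$ and $T=M_b$ (multiplication by $b$), the inner commutator is $[H_1,b]=2P_1bP_1^\perp-2P_1^\perp bP_1$. Then I apply Remark~\ref{Hilbobs} again with $H=H_2$ and $T=[H_1,b]$: since $H_2$ is bounded and $[H_1,b]$ is a linear operator, $[H_2,[H_1,b]]$ is bounded iff $P_2[H_1,b]P_2^\perp$ and $P_2^\perp[H_1,b]P_2$ are bounded. Substituting the expression for $[H_1,b]$ and using that $P_1,P_1^\perp$ commute with $P_2,P_2^\perp$ (they act in different variables), $P_2[H_1,b]P_2^\perp$ splits as $2P_1P_2bP_1^\perp P_2^\perp-2P_1^\perp P_2bP_1P_2^\perp$, and similarly for $P_2^\perp[H_1,b]P_2$. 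The crucial point is that these four resulting operators have ``disjoint'' domain/range projections in the sense that one can recover each one individually from the sum: composing on the left and right with the appropriate coordinate projections $P_1^{(\perp)}P_2^{(\perp)}$ isolates each term, because e.g. $P_1P_2\cdot(P_1P_2bP_1^\perp P_2^\perp)\cdot P_1^\perp P_2^\perp=P_1P_2bP_1^\perp P_2^\perp$ while all the other pieces are killed. Hence boundedness of the commutator is equivalent to boundedness of each of the four pieces $P_1P_2bP_1^\perp P_2^\perp$, $P_1^\perp P_2bP_1P_2^\perp$, $P_1P_2^\perp bP_1^\perp P_2$, $P_1^\perp P_2^\perp bP_1P_2$. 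The identical argument with $i=3$ handles $[H_2,[H_3,b]]$, giving the four operators with $i=3$. Together this is exactly the list in item (1), ranging over $i\in\{1,3\}$.

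\medskip

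\textbf{Part (2).} The argument is structurally the same, with $H_3H_1$ playing the role of a single ``Hilbert-type'' operator. I would first check that $H_3H_1=Q_{13}-Q_{13}^\perp$: indeed $H_1=P_1-P_1^\perp$, $H_3=P_3-P_3^\perp$ commute, so $H_3H_1=(P_1-P_1^\perp)(P_3-P_3^\perp)=(P_1P_3+P_1^\perp P_3^\perp)-(P_1^\perp P_3+P_1P_3^\perp)=Q_{13}-Q_{13}^\perp$, where $Q_{13}=P_1P_3+P_1^\perp P_3^\perp$ and $Q_{13}^\perp=P_1^\perp P_3+P_1P_3^\perp$ are complementary orthogonal projections. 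Now apply Remark~\ref{Hilbobs} with $H=H_3H_1$, $T=M_b$: $[H_3H_1,b]=2Q_{13}bQ_{13}^\perp-2Q_{13}^\perp bQ_{13}$. Then apply Remark~\ref{Hilbobs} with $H=H_2$, $T=[H_3H_1,b]$: the iterated commutator is bounded iff $P_2[H_3H_1,b]P_2^\perp$ and $P_2^\perp[H_3H_1,b]P_2$ are bounded. Since $Q_{13},Q_{13}^\perp$ act in variables $1,3$ and $P_2,P_2^\perp$ in variable $2$, they commute, so $P_2[H_3H_1,b]P_2^\perp=2P_2Q_{13}bQ_{13}^\perp P_2^\perp-2P_2Q_{13}^\perp bQ_{13}P_2^\perp$ and $P_2^\perp[H_3H_1,b]P_2=2P_2^\perp Q_{13}^\perp bQ_{13}P_2-2P_2^\perp Q_{13}bQ_{13}^\perp P_2$. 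Again the four terms $P_2Q_{13}bQ_{13}^\perp P_2^\perp$, $P_2^\perp Q_{13}^\perp bQ_{13}P_2$, $P_2Q_{13}^\perp bQ_{13}P_2^\perp$, $P_2^\perp Q_{13}bQ_{13}^\perp P_2$ can be separated from one another by sandwiching with the mutually orthogonal projections $P_2^{(\perp)}Q_{13}^{(\perp)}$, so boundedness of the commutator is equivalent to boundedness of all four. This is exactly item (2).

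\medskip

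I do not expect a genuine obstacle here — this lemma is purely algebraic. The only point requiring a little care is the ``separation'' argument: one must make sure the domain/range projection pairs attached to the various summands are pairwise incompatible (product to zero on at least one side), so that the sum is bounded iff each summand is. This is immediate from the orthogonality of $P$ vs. $P^\perp$ in each variable and the fact that the operators in different variables commute, but it should be stated explicitly rather than waved through, since it is exactly the mechanism that turns ``the sum is bounded'' into ``each piece is bounded.'' Everything else is substitution into Remark~\ref{Hilbobs} and expanding products of commuting projections.
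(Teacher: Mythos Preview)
Your proof is correct and follows essentially the same route as the paper: apply Remark~\ref{Hilbobs} twice to expand the iterated commutator into four pieces, observe that $H_3H_1=Q_{13}-Q_{13}^\perp$ so part (2) reduces to the same computation, and use mutual orthogonality of the pieces to pass from boundedness of the sum to boundedness of each summand. The paper phrases the separation step as ``the ranges of all arising summands are mutually orthogonal,'' which is exactly your sandwiching argument.
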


\begin{proof}
Using Remark \ref{Hilbobs} it is easy to see that 
\begin{equation*}
[ H_{2} , [ H_{1} ,b ] ] = 4\big{(}(P_2P_1bP_1^\perp P_2^\perp - P_2P_1^\perp bP_1P_2^\perp) - (P_2^\perp P_1bP_1^\perp P_2 - P_2^\perp P_1^\perp bP_1 P_2)\big{)}
\end{equation*} 
and that the corresponding equation for $[ H_{2} , [ H_{3} ,b]]$ is also true. This, along with the observation that  the ranges of
  all arising summands are mutually orthogonal, gives assertion {\it (1)}.
 To prove {\it (2)} we just notice that $H_1H_3 = Q_{13}  - Q_{13}^\perp$ is a Hilbert transform type operator which permits us to repeat the above argument replacing $P_1$ by $Q_{13}$.
 \end{proof}

The following lemma will allow us to insert an additional Hilbert transform into the commutator without reducing the norm.
  
  \begin{lemma}
 \label{ToeplitzLemma2}
 $\|P_3 P_1^\perp P_2^{\perp}  b P_1 P_2 P_3 \|_{L^2 \to L^2} = \|P_1^\perp P_2^{\perp}  b P_1 P_2 \|_{L^2 \to L^2}.$
 \end{lemma}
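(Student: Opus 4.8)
The statement compares the norms of two operators acting on functions of three and two variables respectively. The plan is to exploit the tensor structure: the operator $P_3 P_1^\perp P_2^\perp b P_1 P_2 P_3$ acts in the third variable only through the projections $P_3$ on both sides, which "localize" to the analytic part in $x_3$, while the middle factor $P_1^\perp P_2^\perp b P_1 P_2$ is the two-variable operator of interest, now carrying $x_3$ as a parameter. So the first thing I would do is decompose $L^2(\mathbb{T}^3) = L^2(\mathbb{T}^2) \otimes L^2(\mathbb{T})$ and write $P_3$ as the orthogonal projection onto $L^2(\mathbb{T}^2)\otimes H^2(\mathbb{T})$, where $H^2(\mathbb{T})$ is the analytic subspace.

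The inequality $\|P_3 P_1^\perp P_2^\perp b P_1 P_2 P_3\| \le \|P_1^\perp P_2^\perp b P_1 P_2\|$ is the easy direction: for fixed "frequency" in $x_3$, or more precisely since $P_1, P_2, M_b$ all commute with... no — $M_b$ does \emph{not} commute with anything. The cleaner route: $P_1^\perp P_2^\perp b P_1 P_2$, viewed as an operator on $L^2(\mathbb{T}^3)$ (acting trivially, i.e. as identity, in $x_3$), has the same norm as the original two-variable operator because $L^2(\mathbb{T}^3) = L^2(\mathbb{T}^2)\otimes L^2(\mathbb{T})$ and the operator is $A \otimes I$ with $A = P_1^\perp P_2^\perp b P_1 P_2$; note $\|A\otimes I\|=\|A\|$. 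Wait, but $b=b(x_1,x_2,x_3)$ genuinely depends on $x_3$, so $P_1^\perp P_2^\perp b P_1 P_2$ is \emph{not} of the form $A\otimes I$. I need to be careful here.

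Here is the correct approach. Since $P_3$ commutes with $P_1$, $P_2$, $P_1^\perp$, $P_2^\perp$ (they act on different variables), we have
\[
P_3 P_1^\perp P_2^\perp b P_1 P_2 P_3 = P_3\, (P_1^\perp P_2^\perp b P_1 P_2)\, P_3,
\]
so the left side is a compression of the operator $T:=P_1^\perp P_2^\perp b P_1 P_2$ on $L^2(\mathbb{T}^3)$ to the subspace $\operatorname{Ran}P_3$; hence $\|P_3 T P_3\|\le \|T\|$. Now I must show $\|T\|_{L^2(\mathbb{T}^3)\to L^2(\mathbb{T}^3)} = \|P_1^\perp P_2^\perp b(\cdot,\cdot,x_3) P_1 P_2\|_{L^2(\mathbb{T}^2)\to L^2(\mathbb{T}^2)}$, where on the right one takes the supremum over $x_3$ (or the essential supremum). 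This is a standard direct-integral / fibering argument: $T$ is a "decomposable" operator over the $x_3$ variable — for each fixed $x_3$ it acts as $T_{x_3}:=P_1^\perp P_2^\perp b(\cdot,\cdot,x_3)P_1 P_2$ on $L^2(\mathbb{T}^2)$ — and the norm of a decomposable operator on $\int^\oplus L^2(\mathbb{T}^2)\,dx_3$ equals $\operatorname{ess\,sup}_{x_3}\|T_{x_3}\|$. Combined with the compression bound, $\|P_3 P_1^\perp P_2^\perp b P_1 P_2 P_3\| \le \operatorname{ess\,sup}_{x_3}\|T_{x_3}\|$, but this is not quite the claimed equality yet because the right-hand side of the lemma is written as the two-variable norm of $P_1^\perp P_2^\perp b P_1 P_2$ — which, reading the paper's conventions, \emph{is} this direct-integral norm $\operatorname{ess\,sup}_{x_3}\|T_{x_3}\|$ (equivalently, its norm as an operator on $L^2(\mathbb{T}^3)$). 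So the $\le$ direction follows.

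For the reverse inequality $\|P_3 T P_3\| \ge \|T\|$, the idea is that conjugating by a modulation $e^{inx_3}$ (multiplication by the character $z_3^n$) does not change $\|T\|$, $M_b$ is \emph{not} translation-invariant in $x_3$ — hmm. Let me instead use: $M_{z_3^n}$ commutes with $P_1,P_2,P_1^\perp,P_2^\perp$ and conjugation replaces $b(x_1,x_2,x_3)$ by $b(x_1,x_2,x_3)$ (a modulation of $b$ by $z_3^n\bar z_3^n=1$ cancels), so that's useless. The right tool is that $\operatorname{Ran}P_3 = H^2(\mathbb{T})$ in the $x_3$ slot is large: given any $f\in L^2(\mathbb{T}^3)$ and $\varepsilon>0$, find $x_3^0$ with $\|T_{x_3^0}\|$ close to the sup, localize $f$ in $x_3$ near $x_3^0$ using a bump that is (approximately) analytic — e.g. a high-frequency analytic polynomial $\phi_N(z_3)$ that concentrates its mass; since $P_1^\perp P_2^\perp b P_1 P_2$ varies continuously enough in $x_3$ (a routine approximation, using density of continuous $b$ and that the estimate is stable — or simply a Lebesgue-point argument for the operator-valued function $x_3\mapsto T_{x_3}$), the action of $P_3 T P_3$ on $g\otimes \phi_N$ has norm approaching $\|T_{x_3^0}\|\|g\|\|\phi_N\|$. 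Thus $\|P_3 T P_3\|\ge \operatorname{ess\,sup}_{x_3}\|T_{x_3}\| = \|T\|$.

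\textbf{Main obstacle.} The genuinely delicate point is the reverse inequality: making precise the "localization in $x_3$ by an analytic bump" so that it is compatible with the projection $P_3$ onto the analytic subspace, and controlling the variation of $x_3\mapsto T_{x_3}$ (the symbol $b$ is only $L^1$, so $T_{x_3}$ need not be defined pointwise or vary continuously — one should reduce to $b$ smooth by the density/stability already implicit in the "linear dependence on norms" framework, or argue via a weak-$*$ / direct-integral measurability statement). Everything else — the commutation of $P_3$ with the first- and second-variable projections, and the compression bound $\|P_3 T P_3\|\le\|T\|$ — is immediate.
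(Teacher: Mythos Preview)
Your treatment of the easy direction $\|P_3 T P_3\|\le\|T\|$ (compression by a projection) and the identification $\|T\|_{L^2(\mathbb{T}^3)}=\operatorname{ess\,sup}_{x_3}\|T_{x_3}\|$ are both fine and match the paper. The real issue is the reverse inequality.

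You actually had the right tool in your hands and threw it away. You observed that conjugation by the unitary $W_3=M_{z_3}$ fixes the multiplication operator, $W_3^{\ast n} b\, W_3^{n}=b$, and concluded ``useless.'' That is precisely the point: conjugate the \emph{whole} operator $P_3 T P_3$, not just $T$. Since $W_3$ is unitary and commutes with $P_1,P_2,P_1^\perp,P_2^\perp$, one gets
\[
A_n:=W_3^{\ast n}(P_3 T P_3)W_3^{n}
= P_1^\perp P_2^\perp\,(W_3^{\ast n}P_3 W_3^{n})\, b\, P_1 P_2\,(W_3^{\ast n}P_3 W_3^{n}),
\]
with $\|A_n\|=\|P_3 T P_3\|$ for every $n$. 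The operators $W_3^{\ast n}P_3 W_3^{n}$ converge to the identity in the strong operator topology (this is just $\|(I-P_3)W_3^{n}f\|\to 0$, the tail of a Fourier series), so $A_n\to T$ strongly and hence $\|T\|\le\sup_n\|A_n\|=\|P_3 T P_3\|$. This is exactly the paper's argument, and it requires no regularity of $b$ whatsoever.

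By contrast, your proposed route via analytic bumps $\phi_N$ concentrating at a point $x_3^0$ runs into the obstacle you yourself flag: $b$ is only $L^1$, so $x_3\mapsto T_{x_3}$ need not be defined pointwise or vary continuously, and worse, $T(g\otimes\phi_N)$ is not a tensor in the third variable (since $T_{x_3}$ depends on $x_3$), so applying the outer $P_3$ is not transparent. One can push this through with additional approximation arguments, but it is strictly harder than the shift trick you discarded.
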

 
 \begin{proof}
 
  The inequality $\le$ is trivial, since $P_3$ is a projection which commutes with $ P_1^\perp$ and $ P_2^{\perp}.$ To see $\ge$, notice that
 $P_3 P_1^\perp P_2^{\perp}  b P_1 P_2 P_3$ is a Toeplitz operator with symbol $P_1^\perp P_2^{\perp}  b P_1 P_2$. So $\|P_3 P_1^\perp P_2^{\perp}  b P_1 P_2 P_3 \| = sup_{x_3}\|P_1^\perp P_2^{\perp}  b(\cdot,\cdot,x_3) P_1 P_2 \|.$ The latter is just $\|P_1^\perp P_2^{\perp}  b P_1 P_2 \|$. For convenience we include a sketch of the facts about Toeplitz operators we use.
 Let $W_3$ be the operator of multiplication by $z_3$,
$W_3(f) = z_3  f$, acting on  $L^2({\mathbb{T}}^3)$. If we define $B = P_1^\perp P_2^{\perp}  b P_1 P_2 $ as well as
$$A_n = W_3^{\ast n}(P_3 P_1^\perp P_2^{\perp}  b P_1 P_2 P_3) W_3^n \text{ 
and } C_n = W_3^{ n}(P^\perp_3 P_1^\perp P_2^{\perp}  b P_1 P_2 P^\perp_3) W_3^{\ast n}$$ as operators acting on $L^2({\mathbb{T}}^3)$
then the sequences $A_n$ and $C_n$ converge to $B$ in the strong operator topology: 
 it is easy to see that $W_3$ , $W_3^\ast$; and $P_3$ commute with $P_1, P_2 , P_1^\perp $ and $P_2^\perp$. The multiplier $b$ satisfies the equation
$ W_3^{\ast n} b W_3^n = b$ and $W_3^nW_3^{\ast n} = Id.$  So we see that
\[
A_n =P_1^\perp P_2^{\perp}(W_3^{\ast n}P_3 W_3^{ n}) b P_1 P_2(W_3^{\ast n} P_3 W_3^n).
\]
But if $f \in  L^2({\mathbb{T}}^3)$, then, since $W_3^n$ is a unitary operator:
$$\| W_3^{\ast n} P_3 W_3^n(f) - f\| = \|P_3 W_3^n(f) - W_3^n(f)\| = \| (P_3 - I)(W_3^n)(f)\| \to 0\ \ \ \ (n\to \infty),$$
as tail of a convergent Fourier series.
This means that $W_3^{\ast n} P_3 W_3^n $
converges to the identity in the strong operator topology. Thus, for each $f\in  L^2({\mathbb{T}}^3)$ we have $\| (A_n - B) (f) \| \to 0$.
So
\begin{align*}
 \| P_1^\perp P_2^{\perp}  b P_1 P_2  \| 
&\le\sup_{n\in {\mathbb{N}}}\| W_3^{\ast n}(P_3 P_1^\perp P_2^{\perp}  b P_1 P_2 P_3) W_3^n\| \cr
&\le\| P_3 P_1^\perp P_2^{\perp}  b P_1 P_2 P_3\|,\cr
\end{align*}
\end{proof}
 
Now, we are ready to proceed with the proof of the main theorem of this section.
  
 \begin{proof}
  (of Theorem \ref{hilbthm}) We show ${\it (1)} \Leftrightarrow {\it (2)}$ and ${\it (2)} \Leftrightarrow {\it (3)}$.
  
  ${\it (1)} \Leftrightarrow {\it (2)}$.
  Consider $f=f ( x_{1} ,x_{2} )$ and $g=g ( x_{3} )$. Then $[ H_{2}
  , [ H_{1} ,b ] ] ( f g ) =g \cdot [ H_{2} , [ H_{1} ,b ] ] ( f ) .$ So $\| [
  H_{2} , [ H_{1} ,b ] ] ( f g ) \|^{2}_{L^{2} ( \mathbb{T}^{3} )} = \| F g
  \|^{2}_{L^{2} ( \mathbb{T} )}$ where $F ( x_{2} ) = \| [ H_{2} , [ H_{1} ,b
  ] ] ( f ) \|_{L^{2} ( \mathbb{T}^{2} )}$. The map $g \mapsto F g$ has
  $L^{2} ( \mathbb{T} )$ operator norm $\| F \|_{\infty}$. Now change the
  roles of $x_{1}$ and $x_{3}$. The Ferguson-Lacey equivalences $\| [ H_{2} ,
  [ H_{i} ,b ] ] \| \sim \| b \|_{\text{BMO}}$ give the desired result.

${\it (2)} \Rightarrow {\it (3)}$.
Boundedness of the commutators $[ H_{2} , [ H_{1} ,b ] ]$ and $ [ H_{2} , [H_{3} ,b ] ]$ implies the boundedness of  the mixed commutator $[ H_{2} , [ H_{1} H_{3},b]]$ by the identity $[ H_{2} , [ H_{1} H_{3},b]]=H_1[H_2,[H_3,b]]+[H_2,[H_1,b]]H_3$.
  
  ${\it (3)} \Rightarrow {\it (2)}$.
  This part relies on Lemma \ref{ToeplitzLemma2}.
  We wish to conclude from the boundedness of $ [ H_{2} , [ H_{3}
  H_{1} ,b ] ] $ the boundedness of $[ H_{2} , [ H_{1} ,b ] ]$ and
  $[ H_{2} , [ H_{3} ,b ] ]$. To see boundedness of $[ H_{2} , [ H_{1} ,b ]
  ]$, let us look at one of the Hankels from Lemma \ref{hilblem}.   Lemma \ref{ToeplitzLemma2} shows that $P^{\perp}_{2} P^{\perp}_{1}{bP}_{2} P_{1} $ is bounded if and only if  the operator $P_3 P_1^\perp P_2^{\perp}  b P_1 P_2 P_3$ is. And the latter is an operator found in the list from part {\it (2)} of Lemma \ref{hilblem}. The analogous reasoning shows that all eight Hankels in \ref{hilblem} are bounded and so {\it (2)} is proved.
  \end{proof}

\section{Real variables: lower bounds}\label{section_riesz}

In this section, we are again in $\mathbb{R}^{\vec{d}}$ with $\vec{d}=(d_1,\ldots ,d_t)$ and a partition $\mathcal{I}=(I_s)_{1\le s \le l}$ of $\{1,\ldots ,t\}$. It is our aim to prove the following characterization theorem of the space $\text{BMO}_{\mathcal{I}}(\mathbb{R}^{\vec{d}})$.

\begin{theorem}\label{theorem_riesz} The following are equivalent with linear dependence of the respective norms.
\begin{enumerate}[(1)]
\item $b \in \text{BMO}_{\mathcal{I}}(\mathbb{R}^{\vec{d}})$
\item All commutators of the form $[R_{k_1,j_{k_1}},\ldots ,[R_{k_l,j_{k_l}},b]\ldots ]$ are bounded in $L^2(\mathbb{R}^{\vec{d}})$ where $k_s\in I_s$  and $R_{k_s,j_{k_s}}$ is the one-parameter Riesz transform in direction $j_{k_s}$.
\item All commutators of the form $[\vec{R}_{1,\vec{j}^{(1)}},\ldots,[\vec{R}_{l,\vec{j}^{(l)}},b]\ldots] $ are bounded in $L^2(\mathbb{R}^{\vec{d}})$ where $\vec{j}^{(s)}=(j_k)_{k\in I_s}$, $1\le j_k\le d_k$ and the operators $\vec{R}_{s,\vec{j}^{(s)}}$ are a tensor product of Riesz transforms $\vec{R}_{s,\vec{j}^{(s)}}=\bigotimes_{k\in I_s}R_{k,j_k}$. 

\end{enumerate}
\end{theorem}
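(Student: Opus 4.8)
The plan is to prove the cycle of implications $(1)\Rightarrow(3)\Rightarrow(2)\Rightarrow(1)$, mirroring the structure of the Hilbert transform case (Theorem \ref{hilbthm}) but now in the absence of the clean projection algebra that was available on $\mathbb{T}^3$. The implication $(1)\Rightarrow(3)$ is the upper (sufficiency) bound: assuming $b\in\mathrm{BMO}_{\mathcal{I}}$, we must show every iterated commutator with tensor products of Riesz transforms $\vec R_{s,\vec j^{(s)}}=\bigotimes_{k\in I_s}R_{k,j_k}$ is bounded on $L^2(\mathbb{R}^{\vec d})$. Here I would freeze all variables not indexed by a chosen selector $\boldsymbol v=(v_s)$, $v_s\in I_s$, reducing to a product-BMO statement in the selected variables, and then invoke the known boundedness of iterated commutators of product-BMO symbols with tensor products of Riesz transforms from \cite{LPPW} (together with a vector-valued / Fubini argument to handle the frozen variables, as in the $(1)\Leftrightarrow(2)$ step of Theorem \ref{hilbthm}). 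Actually, since this is the ``easy'' direction I expect it to follow by iterating the one-parameter Coifman--Rochberg--Weiss estimate and the multi-parameter estimate of \cite{LPPW} variable block by variable block.

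The implication $(3)\Rightarrow(2)$ is purely algebraic and cheap: a tensor product $\vec R_{s,\vec j^{(s)}}=\bigotimes_{k\in I_s}R_{k,j_k}$ factors as a composition of the single Riesz transforms $R_{k,j_k}$, $k\in I_s$, and these commute with one another and with everything acting in other variable blocks; so the commutator $[\vec R_{1,\vec j^{(1)}},\ldots]$ can be expanded, via the Leibniz-type identity $[AB,T]=A[B,T]+[A,T]B$ used in the $(2)\Rightarrow(3)$ step of Theorem \ref{hilbthm}, into a finite sum of compositions of bounded operators with iterated commutators of the form $[R_{k_1,j_{k_1}},\ldots,[R_{k_l,j_{k_l}},b]\ldots]$ with $k_s\in I_s$. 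Running this the other way (peeling one block at a time) shows each single-transform iterated commutator is controlled by the tensor-product ones, which is the direction we need; the reverse bookkeeping gives $(2)\Rightarrow(3)$ as a bonus, but only $(3)\Rightarrow(2)$ is needed to close the cycle here.

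The implication $(2)\Rightarrow(1)$ is the genuine content and the main obstacle: we need a \emph{lower} bound, $\|b\|_{\mathrm{BMO}_{\mathcal{I}}}\lesssim\sup\|[R_{k_1,j_{k_1}},\ldots,[R_{k_l,j_{k_l}},b]\ldots]\|$. Recall $\|b\|_{\mathrm{BMO}_{\mathcal{I}}}=\max_{\boldsymbol v}\sup_{\vec x_{\hat{\boldsymbol v}}}\|b(\vec x_{\hat{\boldsymbol v}})\|_{\mathrm{BMO}}$, a product-BMO norm in the selected variables. The strategy is: fix a selector $\boldsymbol v$; for a fixed value of the frozen variables test the commutator on functions that are tensor products of an arbitrary function in the selected variables with bump functions in the frozen variables, localized near the frozen point, so that the frozen action is essentially scalar and one is left estimating an iterated commutator $[R_{v_1,j_{v_1}},\ldots,[R_{v_l,j_{v_l}},b(\vec x_{\hat{\boldsymbol v}})]\ldots]$ of a function of the selected variables only; then apply the multi-parameter lower bound for iterated Riesz commutators against product BMO from \cite{LPPW} to recover $\|b(\vec x_{\hat{\boldsymbol v}})\|_{\mathrm{BMO}}$, uniformly in the frozen point. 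The delicate points are (a) making the ``freezing'' rigorous --- a weak-$\ast$/limiting argument in the spirit of Lemma \ref{ToeplitzLemma2}, taking the frozen bumps to concentrate and controlling the error terms where a Riesz transform in a frozen variable does not act as the identity; and (b) the fact that when $|I_s|\ge 2$ one only has tensor products of Riesz transforms rather than a single Hilbert transform, so the reduction to a bona fide product-BMO lower estimate must go through the real-variable machinery of \cite{LPPW} rather than the one-dimensional analytic arguments of \cite{FS}. This is exactly the point flagged in the introduction as requiring the cone-adapted Calderón--Zygmund operators and, ultimately, the non-tensor Journé operators; but for the statement as phrased here --- equivalence among $(1)$, $(2)$, $(3)$ with $(2)$ and $(3)$ both referring to Riesz transforms --- the lower bound $(2)\Rightarrow(1)$ should be obtainable by combining the variable-freezing reduction with the already-established $\mathrm{BMO}_{(13)(2)}$-type result and the product-BMO lower estimates of \cite{LPPW}, with the later sections supplying the harder passage needed to bring Journé operators into the testing class.
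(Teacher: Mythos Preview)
Your cycle $(1)\Rightarrow(3)\Rightarrow(2)\Rightarrow(1)$ has a genuine gap at the step you label ``purely algebraic and cheap'': the implication $(3)\Rightarrow(2)$. The Leibniz identity $[AB,T]=A[B,T]+[A,T]B$ only runs one way. It lets you write a tensor-product commutator as a sum of compositions involving single-Riesz commutators, which yields $(2)\Rightarrow(3)$; it does \emph{not} let you isolate a single-Riesz commutator from knowledge that the tensor-product commutator is bounded. Concretely, from boundedness of $[R_{1,j_1}R_{3,j_3},[R_{2,j_2},b]]=R_{1,j_1}[R_{3,j_3},[R_{2,j_2},b]]+[R_{1,j_1},[R_{2,j_2},b]]R_{3,j_3}$ you cannot conclude that either summand is bounded, and the Riesz transforms are not invertible on $L^2$, so you cannot peel $R_{3,j_3}$ off on the right. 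The paper flags this explicitly: ``It does not seem possible to get a lower estimate $(3)\Rightarrow(2)$ directly.'' In the Hilbert transform model this step required the Toeplitz-operator trick (Lemma~\ref{ToeplitzLemma2}), not Leibniz; in the Riesz setting even the real-variable analogue (Lemma~\ref{lemma_realtoeplitz}) is not enough by itself.

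The paper therefore does \emph{not} argue via your cycle. It proves $(1)\Leftrightarrow(2)$ and $(1)\Leftrightarrow(3)$ separately. Your sketch of $(2)\Rightarrow(1)$ (freeze the non-selected variables, reduce to the \cite{LPPW} product-BMO lower bound) matches the paper's argument for that implication. But the hard lower bound is $(3)\Rightarrow(1)$, and it requires exactly the machinery you set aside as belonging to ``later sections'': the cone-adapted operators (Lemmas~\ref{lemma_testfunction_ball}, \ref{lemma_cone}), the construction of the special Journ\'e multipliers $C_i$ with polynomial approximations in tensor-Riesz variables (Lemma~\ref{lemma_homogeneous polynomials}), the lower bound for the Journ\'e commutator (Lemma~\ref{lemma_lowerbd_Journe}), and the stability result (Corollary~\ref{lemma_perturbation}) that passes from $C_i$ to the polynomial $C_i^{(N)}$ and then, via repeated Leibniz in the \emph{forward} direction, down to tensor products of Riesz transforms. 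That apparatus is not an optional add-on for Journ\'e operators; it is the proof of $(3)\Rightarrow(1)$ in Theorem~\ref{theorem_riesz} itself.
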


Such two-sided estimates also hold in $L^p$ for $1<p<\infty$. Remarks will be made in section \ref{generalcase}. From the inductive nature of our arguments, it will also be apparent that the characterization holds when we consider intermediate cases, meaning commutators with any fixed number of Riesz transforms in each iterate. Below we state our most general two-sided estimate through Riesz transforms.
\begin{theorem}\label{theorem_riesz_choice_number}
Let $1<p<\infty$. Under the same assumptions as Corollary \ref{corollary_riesz} and for any fixed $\vec{n}=(n_s)$ where $1\le n_s \le |I_s|$, we have the two-sided estimate 
$$\|b\|_{\text{BMO}_{\mathcal{I}}(\mathbb{R}^{\vec{d}})}\lesssim\sup_{\vec{j}}\|[\vec{R}_{1,\vec{j}^{(1)}},\ldots,[\vec{R}_{l,\vec{j}^{(l)}},b]\ldots] \|_{L^p(\mathbb{R}^{\vec{d}})\righttoleftarrow}\lesssim\|b\|_{\text{BMO}_{\mathcal{I}}(\mathbb{R}^{\vec{d}})}$$  where $\vec{j}^{(s)}=(j_k)_{k\in I_s}$, $0\le j_k\le d_k$ and for each $s$, there are $n_s$ non-zero choices. A Riesz transform in direction 0 is understood as the identity.
\end{theorem}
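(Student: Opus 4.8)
The statement interpolates between items (2) (all $n_s=1$) and (3) (all $n_s=|I_s|$) of Theorem~\ref{theorem_riesz} and in addition passes from $L^2$ to $L^p$, so the plan is to read it off from the proof of that theorem, run in $L^p$ as indicated in Section~\ref{generalcase}, treating the two inequalities separately. For the right-hand (upper) inequality I would observe that, for every block $s$ and every admissible $\vec{j}^{(s)}$ with $n_s$ nonzero directions, $\vec{R}_{s,\vec{j}^{(s)}}=\bigotimes_{k\in I_s}R_{k,j_k}$ is a product of one-parameter Riesz transforms, hence a paraproduct-free Journ\'e operator in the variables indexed by $I_s$ with Calder\'on--Zygmund constants uniform in $\vec{j}$; thus $[\vec{R}_{1,\vec{j}^{(1)}},\ldots,[\vec{R}_{l,\vec{j}^{(l)}},b]\ldots]$ is an iterated commutator of paraproduct-free Journ\'e operators, one per block of $\mathcal{I}$, and the qualitative upper estimate for such iterated commutators (valid for $1<p<\infty$; see Section~\ref{generalcase}) applies with symbol class $\text{BMO}_{\mathcal{I}}(\mathbb{R}^{\vec{d}})$ and yields the upper bound, uniformly in $\vec{j}$.

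For the lower bound I would run the argument behind Theorem~\ref{theorem_riesz}, which is inductive in the number of Riesz transforms in each slot and so applies for an arbitrary $\vec{n}$. The first mechanism is \emph{peeling}: for a block $s$ with $n_s\ge2$ I would replace the $s$-th slot by the special plateau Journ\'e operator $J_s$ constructed in Section~\ref{section_riesz} from an averaging of zonal harmonics over $\prod_{k\in I_s}\mathbb{S}^{d_k-1}$. This $J_s$ is a paraproduct-free Journ\'e operator in the variables of $I_s$, a polynomial in the tensor products $\vec{R}_{s,\vec{j}^{(s)}}$ with $n_s$ nonzero directions, and its multiplier has large plateaus so that it resembles a tensor product of one-dimensional Hilbert-type operators, in particular a Hilbert-type operator in the single variable $x_{v_s}$ tensored with the identity in the remaining variables of $I_s$. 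Since $J_s$ is a polynomial in the $\vec{R}_{s,\vec{j}^{(s)}}$, the Leibniz rule together with the $L^p$-boundedness of Riesz tensor products bounds the norm of the iterated commutator with $J_s$ in the $s$-th slot by $\sup_{\vec{j}}\|[\vec{R}_{1,\vec{j}^{(1)}},\ldots,[\vec{R}_{l,\vec{j}^{(l)}},b]\ldots]\|_{L^p}$; and, $J_s$ being Hilbert-like, the $L^p$ version of the Toeplitz-type identity of Lemma~\ref{ToeplitzLemma2} strips the identity factors and reduces the $s$-th slot to a single Riesz transform $R_{v_s,j}$. Iterating over the blocks reaches $\vec{n}=(1,\ldots,1)$.

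For the base case $\vec{n}=(1,\ldots,1)$ I would proceed as in the proof of Theorem~\ref{hilbthm}: testing on functions $f\big((x_{v_s})_s\big)\prod_{k\notin\{v_1,\ldots,v_l\}}g_k(x_k)$, using that each $R_{v_s,j_s}$ commutes with multiplication by the $g_k$ and that, at fixed values of the inactive variables, the iterated commutator equals the iterated Riesz commutator on $L^p(\prod_s\mathbb{R}^{d_{v_s}})$ with symbol $b(\vec{x}_{\hat{\boldsymbol{v}}})$, and letting the $g_k$ exhaust $L^p$, one obtains, for every admissible one-per-block choice $R_{v_1,j_1},\ldots,R_{v_l,j_l}$,
\[
\sup_{\vec{x}_{\hat{\boldsymbol{v}}}}\|[R_{v_1,j_1},\ldots,[R_{v_l,j_l},b(\vec{x}_{\hat{\boldsymbol{v}}})]\ldots]\|_{L^p(\prod_s\mathbb{R}^{d_{v_s}})}\le\|[R_{v_1,j_1},\ldots,[R_{v_l,j_l},b]\ldots]\|_{L^p(\mathbb{R}^{\vec{d}})}.
\]
The $L^p$ form of the multi-parameter lower commutator estimate of \cite{LPPW} then gives $\|b(\vec{x}_{\hat{\boldsymbol{v}}})\|_{\text{BMO}}\lesssim\|[R_{v_1,j_1},\ldots,[R_{v_l,j_l},b(\vec{x}_{\hat{\boldsymbol{v}}})]\ldots]\|_{L^p}$ uniformly in the inactive variables, the $\text{BMO}$ norm being product BMO in $(x_{v_s})_s$; taking the supremum over $\vec{x}_{\hat{\boldsymbol{v}}}$ and then the maximum over the admissible $\boldsymbol{v}=(v_s)$ completes the left-hand inequality.

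The hard part is the peeling step: removing the extra Riesz transforms from a block without losing the lower estimate. A tensor product of Riesz transforms is not a projection, so the conjugation-by-a-shift identity underlying Lemma~\ref{ToeplitzLemma2} is not available directly; it is recovered only after bridging to the special plateau Journ\'e operators $J_s$, which must simultaneously be polynomials in the Riesz tensor products and ``Hilbert-like''. Constructing their multipliers and running the induction on $n_s$ is exactly where the geometry of zonal harmonics on products of spheres and the case distinctions ($d_k=2$ versus $d_k>2$, $n_s=2$ versus $n_s>2$) intervene. By comparison, the $L^p$ upgrades of the Toeplitz-norm identity, of the Journ\'e-commutator upper bound and of the \cite{LPPW} lower estimate should be routine.
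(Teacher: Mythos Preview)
Your upper bound is fine and matches the paper's approach (see Section~\ref{generalcase}).

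For the lower bound, however, you are working much harder than the paper does, and your route has a real gap. The paper's $L^p$ lower estimate is a two-line argument: if the commutators are bounded on $L^p$, then by taking adjoints (observe $[T,b]^*=-\overline{[\overline{T^*},b]\,\overline{\,\cdot\,}}$, and adjoints of Riesz tensor products are again Riesz tensor products up to signs) they are bounded on $L^{p'}$; interpolation gives $L^2$ boundedness, and then the $L^2$ lower estimate of Theorem~\ref{theorem_riesz} yields $b\in\text{BMO}_{\mathcal{I}}$. Nothing from the cone/Journ\'e machinery needs to be redone in $L^p$.

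Your plan instead re-runs the entire Section~\ref{section_riesz} construction in $L^p$, and this is where the gap is. You assert that the plateau operator $J_s$ \emph{is} a polynomial in Riesz tensor products; it is not. The multiplier $C_{i}(\vec{\xi};\cdot)$ is only \emph{approximated} in $\mathcal{C}^m$ by the polynomials $C_i^{(N)}$ (Lemma~\ref{lemma_homogeneous polynomials}), and the passage from $J_s$ to its polynomial truncation is exactly where the stability result, Corollary~\ref{lemma_perturbation}, is invoked. That corollary rests on the Journ\'e commutator upper bound of Theorem~\ref{upperbd_Journe}, which in the paper is proved only in $L^2$ (the dyadic-shift representation and the paraproduct estimates in Section~\ref{section_upper} are all run in $L^2$). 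Your claim that the $L^p$ upgrade of this Journ\'e upper bound is ``routine'' is not justified here and is precisely what the paper's interpolation trick is designed to avoid. Similarly, the Toeplitz-type identity you cite is Lemma~\ref{ToeplitzLemma2}, which lives in the Hilbert-transform setting; in the real-variable argument the relevant statement is Lemma~\ref{lemma_realtoeplitz}, and it is used to \emph{add} cone factors (going from one cone per block to many), not to strip identity factors from a Riesz tensor product down to a single Riesz transform.

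In short: keep your upper bound, but replace your lower-bound peeling argument by the duality--interpolation reduction to $L^2$.
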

For $p=2$ and $\vec{n}=\vec{1}$ this is the equivalence {\it{(1)}} $\Leftrightarrow $ {\it{(2)}} and for $\vec{n}=(|I_1|,\ldots,|I_l|)$ it is the equivalence {\it{(1)}} $\Leftrightarrow$ {\it{(3)}} from Theorem \ref{theorem_riesz}.

Our main focus is of course on a two-sided estimate when $\vec{n}=(|I_1|,\ldots,|I_l|)$ when the tensor product is a paraproduct-free Journ\'e operator:

\begin{corollary}\label{corollary_riesz}
Let $\vec{j}=(j_1,\ldots,j_t)$ with $1\le j_k\le d_k$ and let for each $1\le s\le l$, $\vec{j}^{(s)}=(j_k)_{k\in I_s}$  be associated a tensor product of Riesz transforms $\vec{R}_{s,\vec{j}^{(s)}}=\bigotimes_{k\in I_s}R_{k,j_k}$; here the $R_{k,j_{k}}$ are $j_k^{\text{th}}$ Riesz transforms acting on functions defined on the $k^{\text{th}}$ variable. 
We have the two-sided estimate $$\|b\|_{BMO_{\mathcal{I}}(\mathbb{R}^{\vec{d}})} \lesssim \sup_{\vec{j}}\|[\vec{R}_{1,\vec{j}^{(1)}},\ldots,[\vec{R}_{t,\vec{j}^{(t)}},b]\ldots]\|_{L^p(\mathbb{R}^{\vec{d}})\righttoleftarrow}\lesssim \|b\|_{BMO_{\mathcal{I}}(\mathbb{R}^{\vec{d}})}.$$
\end{corollary}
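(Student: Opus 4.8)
The plan is to deduce Corollary \ref{corollary_riesz} from Theorem \ref{theorem_riesz_choice_number} by specializing to the case $\vec{n}=(|I_1|,\ldots,|I_l|)$. Indeed, the corollary is precisely the statement that for every choice of directions $\vec{j}=(j_1,\ldots,j_t)$ with all $j_k$ nonzero, the full-tensor iterated commutator $[\vec{R}_{1,\vec{j}^{(1)}},\ldots,[\vec{R}_{l,\vec{j}^{(l)}},b]\ldots]$ is bounded on $L^p(\mathbb{R}^{\vec d})$ with two-sided control by $\|b\|_{\text{BMO}_{\mathcal I}(\mathbb{R}^{\vec d})}$, and this is exactly what Theorem \ref{theorem_riesz_choice_number} gives once each $n_s$ is taken maximal, i.e. $n_s=|I_s|$, so that $\vec j^{(s)}=(j_k)_{k\in I_s}$ runs over all $|I_s|$-tuples of nonzero directions. (Note the mismatch in the statement between writing the iterate up to $l$ versus up to $t$: since the partition has $l$ blocks $I_1,\ldots,I_l$, the correct index range is $1\le s\le l$, and each block contributes one tensor-product factor; I would silently use $l$.)

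Concretely, I would first record the upper bound. By Theorem \ref{theorem_riesz_choice_number} with $\vec n=(|I_1|,\ldots,|I_l|)$, we have $\sup_{\vec j}\|[\vec{R}_{1,\vec{j}^{(1)}},\ldots,[\vec{R}_{l,\vec{j}^{(l)}},b]\ldots]\|_{L^p\righttoleftarrow}\lesssim\|b\|_{\text{BMO}_{\mathcal I}(\mathbb{R}^{\vec d})}$, where the supremum is over all $\vec j$ with $0\le j_k\le d_k$ and exactly $n_s=|I_s|$ nonzero entries in block $I_s$ — that is, over all $\vec j$ with every entry nonzero, $1\le j_k\le d_k$. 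This is exactly the right-hand inequality of the corollary. For the lower bound, the same theorem gives $\|b\|_{\text{BMO}_{\mathcal I}(\mathbb{R}^{\vec d})}\lesssim\sup_{\vec j}\|[\vec{R}_{1,\vec{j}^{(1)}},\ldots,[\vec{R}_{l,\vec{j}^{(l)}},b]\ldots]\|_{L^p\righttoleftarrow}$ over the same index set, which is the left-hand inequality. Chaining the two yields the claimed two-sided estimate.

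It remains to point out why Corollary \ref{corollary_riesz} is a genuine special case worth isolating, namely that for $\vec n$ maximal each $\vec R_{s,\vec j^{(s)}}=\bigotimes_{k\in I_s}R_{k,j_k}$ with all directions nonzero is a (paraproduct-free) Journ\'e operator in the group of variables indexed by $I_s$ — a tensor product of genuine Riesz transforms, with no identity factors — so that the corollary exhibits the representative testing class announced in the introduction. I would close by remarking that the $L^p$ range $1<p<\infty$ and the linear dependence of the constants are inherited verbatim from Theorem \ref{theorem_riesz_choice_number}, and that since that theorem is itself obtained by the inductive scheme underlying Theorem \ref{theorem_riesz} (proved in Sections \ref{section_riesz} and \ref{generalcase}), no new argument is needed here.

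The only real subtlety — hardly an obstacle, more a bookkeeping point — is making sure the supremum set in Theorem \ref{theorem_riesz_choice_number} really collapses to "all directions nonzero" when $\vec n=(|I_1|,\ldots,|I_l|)$: one must check that "there are $n_s$ nonzero choices among the $|I_s|$ entries $(j_k)_{k\in I_s}$" forces every $j_k$ with $k\in I_s$ to lie in $\{1,\ldots,d_k\}$, which is immediate since a block of size $|I_s|$ cannot have $|I_s|$ nonzero entries unless all of them are nonzero. With that observation the corollary is a one-line consequence of the preceding theorem.
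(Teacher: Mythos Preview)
Your proposal is correct and matches the paper's approach: the corollary is presented there as the specialization of Theorem \ref{theorem_riesz_choice_number} to $\vec{n}=(|I_1|,\ldots,|I_l|)$, with no separate argument given. Your observation about the index mismatch ($l$ versus $t$) is also apt and consistent with how the paper uses the partition into $l$ blocks.
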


The statements above also serve as the statement of the general case for products of Hilbert transforms. In fact, when any $d_k=1$ just replace the Riesz transforms by the Hilbert transform in that variable. In this section, we consider the case $d_k\ge 2$ for $1\le k\le t$ and thus iterated commutators with tensor products of Riesz transforms only. The special case when $d_k=1$ for some $k$ is easier but requires extra care for notation, which is why we omit it here.

The proof in the Hilbert transform case relied heavily on analytic projections and orthogonal spaces, a feature that we do not have when working with Riesz transforms. We are going to simulate the one-dimensional case by a two-step passage via intermediary Calder\'on-Zygmund operators whose multiplier symbols are adapted to cones. 

In dimension $d \geqslant 2$, a cone $C \subset$ $\mathbb{R}^{d}$  with cubic base is given by
the data $( \xi ,Q )$ where $\xi \in \mathbb{S}^{d-1}$ is the direction
of the cone and the cube $Q \subset \xi^{\perp}$
centered at the origin is its aperture. The cone consists of all vectors
$\theta$ that take the form $( \theta_{\xi}   \xi  , \theta_{\bot} ) $ 
where $   \theta_{\xi} = \langle \theta ,\xi \rangle$ and $\theta_{\bot} \in
\theta_{\xi} Q.$ By $\lambda C$ we mean the dilated cone with data $( \xi
, \lambda Q )$.

A cone $D$ with ball base has data $(\xi,r)$ for $0<r<\pi/2$ and $\xi \in \mathbb{S}^{d-1}$ and consists of the vectors $\{\eta \in \mathbb{R}^{d}:d(\xi,\eta/\|\eta\|) \le r\}$ where $d$ is the geodesic distance (with distance of antipodal points being $\pi$.) 

 Given any cone $C$ or $D$, we consider its Fourier projection operator defined via
$\widehat{P_{C}} f=\chi_{C} \hat{f} .$ When the
apertures are cubes, such operators are combinations of Fourier projections
onto half spaces and as such admit uniform $L^{p}$ bounds. Among others, this fact made cubic cones necessary in the considerations in \cite{LPPW} and \cite{DP} that we are going to need. For further technical
reasons in the proof 
these operators are not quite good enough, mainly
because they are not of Calder\'on-Zygmund type. For a given cone $C$, consider a Calder\'on-Zygmund operator
$T_{C}$ with a kernel $K_{C}  $ whose Fourier symbol $\widehat{K_{C}} \in
C^{\infty}$ and satisfies the estimate $\chi_{C} \leqslant
\widehat{K_{C}} \leqslant \chi_{( 1+ \tau ) C}$. This is accomplished
by mollifying the symbol $\chi_{C}$ of the cone projection associated
to cone $C$ on $\mathbb{S}^{d -1}$ and then extending radially. We use the same definition for $T_{D}$.

Given a collection of cones $\vec{C} = ( C_{k}
)$ we denote by $T_{\vec{C}} ,P_{\vec{C}}$ the corresponding tensor
product operators.

In {\cite{LPPW}} it has been proved that Calder\'on-Zygmund operators adapted to
certain cones of cubic aperture classify product BMO via commutators.  As part of the argument, it was observed that test functions with opposing Fourier supports made the commutator large. 
In {\cite{DP}} a refinement was proven, that will be helpful to us. We prefer to work with cones with round base. Lower bounds for such commutators can be deduced from the assertion of the main theorem in \cite{DP}, 
but we need to preserve the information on the Fourier support of the test function in order to succeed with our argument. Information on this test function is instrumental to our argument: it reduces the terms arising in the commutator to those resembling Hankel operators.  We have the following lemma, very similar to that in \cite{LPPW} and \cite{DP}, the only difference being that the cones are based on balls instead of cubes.

\begin{lemma}\label{lemma_testfunction_ball} For every parameter $1\le k\le t$ there exist a finite set of directions $\Upsilon_k\in \mathbb{S}^{d_k-1}$ and an aperture $0<r_k<\pi/2$ so that,
for every symbol $b$ belonging to product BMO, there exist cones $D_{k}=D(\xi_k,r_k)$ with $\xi_k\in \Upsilon_k$  as well as a normalised test function $f=\bigotimes_{k=1}^t f_k$ whose components have Fourier support in the opposing  cones $D(-\xi_k,r_k)$ such that 
  \[ \|[T_{1,D_{1}} ...,[T_{t,D_{t}} ,b]...] f \|_{2} \gtrsim \| b
     \|_{\text{BMO}_{(1)...(t)}(\mathbb{R}^{\vec{d}})} . \]
\end{lemma}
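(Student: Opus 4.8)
The statement is a multi-parameter, Riesz-type analogue of the classical fact that test functions with opposing Fourier supports make a commutator large, now iterated and phrased for cones with round base. The approach is to reduce it to the cubic-base version already available in \cite{LPPW} and \cite{DP}, paying attention to what those references actually yield and to the fact that we must retain control of the Fourier support of the test function. First I would recall that a cone $D$ with ball base of data $(\xi,r)$ is sandwiched between two cones with cubic base: for appropriate apertures $Q_{\mathrm{in}} \subset Q_{\mathrm{out}}$ in $\xi^\perp$ one has $C(\xi,Q_{\mathrm{in}}) \subset D(\xi,r) \subset C(\xi,Q_{\mathrm{out}})$, and one can arrange $C(\xi, (1+\tau)Q_{\mathrm{in}})$ still inside $D$ and $D$ inside $C(\xi, Q_{\mathrm{out}})$ so that the mollified symbols obey $\widehat{K_{C_{\mathrm{in}}}} \le \widehat{K_D} \le \widehat{K_{C_{\mathrm{out}}}}$ on the relevant region. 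This lets the round-base $T_{k,D_k}$ inherit, on the Fourier support of the test function, the behaviour of the cubic-base operators.

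**Main steps.** (i) Fix the directions: for each $k$, take $\Upsilon_k$ to be the finite set of directions supplied by the cubic-cone argument of \cite{LPPW}/\cite{DP}, and choose $r_k$ small enough that the round cone $D(\xi_k,r_k)$ is trapped between two cubic cones to which that argument applies. (ii) Invoke the cubic version: for $b$ in product BMO there is a normalised $f = \bigotimes_k f_k$ with $\widehat{f_k}$ supported in the opposing cone (the key extra information we extract from the proof in \cite{DP}, not just its statement) such that $\|[T_{1,C_1},\ldots,[T_{t,C_t},b]\ldots]f\|_2 \gtrsim \|b\|_{\mathrm{BMO}}$. (iii) Transfer to round cones: because $\widehat{f_k}$ lives in $D(-\xi_k,r_k)$, which sits inside one opposing cubic cone and contains another, the symbols $\widehat{K_{k,D_k}}$ and $\widehat{K_{k,C_k}}$ differ only in a way that, after expanding the iterated commutator into its $2^t$ terms $\prod_{k} (\text{one of } T_{k} \text{ or } \mathrm{Id}) \cdot b \cdot \prod_k(\ldots)$, acts on Fourier supports where the two families of symbols agree up to the controlled mollification error; one checks term by term that replacing each cubic $T_{k,C_k}$ by the round $T_{k,D_k}$ changes each summand by a quantity bounded by $C\|b\|_{\mathrm{BMO}}$ times a small constant depending on $\tau$ and the geometry, uniformly in $b$. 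Choosing $\tau$ and the gap between inner and outer cones small then preserves the lower bound.

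**The obstacle.** The delicate point is step (iii): one does not simply have $T_{k,D_k} = T_{k,C_k}$ on the span of the relevant frequencies, only that their symbols are close there, and the commutator structure means a small symbol perturbation in one slot gets multiplied by $b$ (an unbounded multiplication operator) and by projection-type operators in the other slots. The honest way through is to observe that in the expansion of $[T_{1,D_1},\ldots,[T_{t,D_t},b]\ldots]f$, the outermost operators in each variable see only frequencies already localised (by $f$ and by the other factors) to a region where the round and cubic symbols coincide exactly — this is why the Fourier-support information on $f$ is \emph{instrumental}, as the text stresses — so the perturbation argument is needed only in the remaining slots, where it is a genuine $L^2 \to L^2$ estimate with the $\mathrm{BMO}$-sized output of the inner commutator as input. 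Concretely I would argue: the inner iterated commutator applied to $f$ produces a function whose $L^2$ norm is $\gtrsim \|b\|_{\mathrm{BMO}}$ and whose Fourier support is still controlled, then peel off the operators one variable at a time, using at each stage that on that variable's localised frequency set the two symbols agree up to the mollification, so the error telescopes to something $\lesssim \tau \|b\|_{\mathrm{BMO}}$.

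**Conclusion.** Combining (i)–(iii) and absorbing the $O(\tau)$ error into the main term gives $\|[T_{1,D_1},\ldots,[T_{t,D_t},b]\ldots]f\|_2 \gtrsim \|b\|_{\mathrm{BMO}_{(1)\ldots(t)}(\mathbb{R}^{\vec{d}})}$, with $\Upsilon_k$, $r_k$ and the test function $f$ exactly as in the statement. The whole argument is morally a translation of \cite{DP} from cubes to balls, so apart from the bookkeeping in step (iii) it should be short; the one thing requiring care is to state \cite{DP} in the sharper form that exposes the Fourier support of the extremising test function, which a reading of its proof supplies.
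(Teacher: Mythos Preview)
Your reduction to the cubic-cone result of \cite{DP} is the right starting point, and you correctly flag that the Fourier-support information on the test function is what drives the argument. But your step (iii) is far more elaborate than needed, and the perturbation scheme you sketch is both unnecessary and not clearly sound as written.

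The paper's proof rests on two observations you do not exploit. First, \emph{collapse, not expansion}: because each $\widehat{f_k}$ is supported in the opposing cone, every summand in the $2^t$-term expansion of $[T_{1,D_1},\ldots,[T_{t,D_t},b]\ldots]f$ in which some $T_{k,D_k}$ lands directly on $f$ vanishes identically. The commutator applied to $f$ is therefore exactly $T_{\vec{D}}(bf)$, and likewise the cubic commutator applied to $f$ equals $T_{\vec{C}}(bf)$. There is nothing to peel off and no error to telescope. Second, \emph{an exact multiplier identity, not an approximation}: the paper chooses $r_k$ \emph{large} (not small) so that $(1+\tau)C_k \subset D_k$. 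Then $\widehat{K_{D_k}}\equiv 1$ on the support of $\widehat{K_{C_k}}$, giving $T_{\vec{C}} = T_{\vec{C}}\,T_{\vec{D}}$ as operators; with $\|T_{\vec{C}}\|_{2\to 2}\le 1$ one gets $\|T_{\vec{D}}(bf)\|_2 \ge \|T_{\vec{C}}(bf)\|_2 \gtrsim \|b\|_{\text{BMO}}$ in one line. Choosing $D_k$ large also ensures that $f$, Fourier-supported in the opposing cubic cones, automatically sits in the opposing round cones $D(-\xi_k,r_k)$ as the lemma demands.

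Your route, by contrast, would need the difference $[T_{1,D_1},\ldots]f - [T_{1,C_1},\ldots]f$ to be bounded by a small multiple of $\|b\|_{\text{BMO}}$. After the collapse above this becomes $\|(T_{\vec{D}}-T_{\vec{C}})(bf)\|_2$, and controlling that by $\|b\|_{\text{BMO}}$ requires a commutator-type upper bound together with smallness of the Calder\'on--Zygmund norm of $T_{\vec{D}}-T_{\vec{C}}$; but narrowing the gap between the cones steepens the symbol transition and does not obviously shrink that norm. The paper's containment argument sidesteps all of this.
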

The stress is on the fact that the collection is finite, somewhat specific and serves all admissible product BMO functions. 

\begin{proof}
The lemma in \cite{DP} supplies us with the sets of directions $\Upsilon_k$ as well as cones of cubic  aperture $Q_k$ and a test function $f$ supported in the opposing cones. Now choose the aperture $r_k$ large enough so that $(1+\tau)C(\xi_k,Q_k)\subset D(\xi_k,r_k)$.  Then we have the commutator estimate  \[ \|[T_{1,D_{1}} ...,[T_{t,D_{t}} ,b]...] f\|_{2} \gtrsim \| b\|_{\text{BMO}_{(1)...(t)}(\mathbb{R}^{\vec{d}})} . \] 
In fact, both commutators with cones $C$ and $D$ are $L^2$ bounded and reduce to $\|T_{\vec{D}}(b f) \|_{2}$ or $\|T_{\vec{C}}(b f) \|_{2}$ respectively thanks to the opposing Fourier support of $f$. Observe that $T_{\vec{C}}(b f)=T_{\vec{D}}(T_{\vec{C}}(b f))=T_{\vec{C}}(T_{\vec{D}}(b f))$.  With $\|T_{\vec{C}}\|_{2\to 2}\le 1$, we see that $\|T_{\vec{D}}(b f)\|_2\ge \|T_{\vec{C}}(b f)\|_2$. 
\end{proof}

Using this a priori lower estimate,  we are going to prove the lemma below. 

\begin{lemma}\label{lemma_cone}
Let us suppose we are in $\mathbb{R}^{\vec{d}}$ with $\vec{d}=(d_1,\ldots,t)$ and a partition $\mathcal{I}=(I_s)_{1\le s \le l}$. For every $1\le k \le t$ there exists a finite set of directions $\Upsilon_k \subset
\mathbb{S}^{d_k - 1}$ and an aperture $r_k$ 
so that the following hold for all $b \in \text{BMO}_{\mathcal{I}}(\mathbb{R}^{\vec{d}}):$
\begin{enumerate}[(1)]
  \item  For every $1\le s \le l$ there exists a coordinate $v_s\in I_s$ and a direction $\xi_{v_s} \in \Upsilon_{v_s}$ and so that with the choice
  of cone  $D_{v_s}=D ( \xi_{v_s}, r_{v_s})$ and arbitrary $D_{k}$ 
  for coordinates $k\in I_s \setminus \{v_s\}$ and if $\vec{D}_s$ denotes their tensor product, then we have  $$\| [ {T}_{1,\vec{D}_1}, \ldots ,[ {T}_{l,\vec{D}_l}, b]\ldots]
  \|_{2 \rightarrow 2} \gtrsim \| b \|_{\text{BMO}_{\mathcal{I}}(\mathbb{R}^{\vec{d}})},$$

  \item The test function $f=\bigotimes_{k=1}^t f_k$ which gives us a large $L^2$ norm in {\it (1)} has
  Fourier supports of the $f_k$ contained in $D(-\xi_k,r_k)$ when $k=v_s$ and in $D_k$ otherwise.
  \end{enumerate}
  
\end{lemma}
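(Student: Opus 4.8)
The plan is to prove Lemma \ref{lemma_cone} by reducing it, one block of the partition at a time, to the fully non-iterated product-BMO statement of Lemma \ref{lemma_testfunction_ball}. The key observation is that the little product BMO norm $\|b\|_{\text{BMO}_{\mathcal{I}}}$ is realized, up to a fixed choice of coordinates $\boldsymbol{v}=(v_s)$ with $v_s\in I_s$, as $\sup_{\vec{x}_{\hat{\boldsymbol{v}}}}\|b(\vec{x}_{\hat{\boldsymbol{v}}})\|_{\text{BMO}}$, a supremum over frozen values of the variables \emph{not} indexed by $\boldsymbol{v}$ of a genuine $l$-fold product BMO norm in the variables $x_{v_1},\ldots,x_{v_l}$. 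So first I would fix $\varepsilon>0$ and choose $\boldsymbol{v}$ and a point $\vec{x}_{\hat{\boldsymbol{v}}}^{\,0}$ achieving this supremum to within $\varepsilon$; the function $\beta:=b(\vec{x}_{\hat{\boldsymbol{v}}}^{\,0})$, a function of the $l$ variables $(x_{v_s})_{s}$, then lies in product BMO over $\mathbb{R}^{(d_{v_1},\ldots,d_{v_l})}$ with norm $\gtrsim\|b\|_{\text{BMO}_{\mathcal{I}}}$.

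Next I would apply Lemma \ref{lemma_testfunction_ball} to $\beta$ in those $l$ variables: this produces finite direction sets $\Upsilon_{v_s}\subset\mathbb{S}^{d_{v_s}-1}$, apertures $r_{v_s}$, cones $D_{v_s}=D(\xi_{v_s},r_{v_s})$, and a tensor test function $g=\bigotimes_s g_s$ with $g_s$ Fourier-supported in $D(-\xi_{v_s},r_{v_s})$, such that $\|[T_{v_1,D_{v_1}},\ldots,[T_{v_l,D_{v_l}},\beta]\ldots]g\|_2\gtrsim\|\beta\|_{\text{BMO}}$. For the remaining coordinates $k\notin\{v_s\}$ I would pick the direction sets $\Upsilon_k$ and apertures $r_k$ arbitrarily (any fixed finite choice works, since nothing is tested in those directions), pick any cone $D_k$, and pick $f_k$ to be a fixed bump whose Fourier support lies in $D_k$ and which is concentrated near $x_k=x_k^0$ at a fine scale — and in fact, by a dilation/translation argument one makes $f_k$ approximate a point mass at $x_k^0$ so that integrating the commutator against $\bigotimes_{k\notin\{v_s\}}f_k$ in those variables recovers, in the limit, the commutator of the frozen symbol $\beta$. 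The tensor structure is what makes this clean: the operators $T_{k,D_k}$ for $k\notin\{v_s\}$ act only in their own variable, commute with multiplication by $b$ modulo the fact that $b$ depends on $x_k$, and the commutator $[T_{l,\vec{D}_l},\,\cdot\,]$ in a block containing no distinguished coordinate contributes a bounded factor that, after the localization, is replaced by the identity. Assembling, $f=\bigotimes_{k=1}^t f_k$ has the stated Fourier supports (item (2)), and $\|[T_{1,\vec{D}_1},\ldots,[T_{l,\vec{D}_l},b]\ldots]f\|_2\gtrsim\|\beta\|_{\text{BMO}}\gtrsim\|b\|_{\text{BMO}_{\mathcal{I}}}-\varepsilon$, which gives item (1).

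The main obstacle is the freezing argument: one cannot literally evaluate $b$ at a point, so the localization in the non-distinguished variables must be carried out with honest bump functions and a limiting argument, and one must check that the extra (bounded, non-testing) Riesz/cone factors $T_{k,D_k}$ for $k\in I_s\setminus\{v_s\}$ do not destroy the lower bound. The way I would handle this is to note that, because $b$ is locally integrable and the commutators in question are assumed to be a priori meaningful, for a.e.\ choice of $\vec{x}_{\hat{\boldsymbol{v}}}$ the slice $b(\vec{x}_{\hat{\boldsymbol{v}}})$ is in product BMO with the right norm bound (Fubini plus the definition), and then the whole construction can be done with the slice directly: put $f_k$, for $k\notin\{v_s\}$, to be an $L^2$-normalized smooth bump supported (in frequency) in $D_k$ and supported (in space) in a ball of radius $\delta$ about $x_k^0$, times $\delta^{-d_k/2}$; letting $\delta\to0$, the inner product against the commutator converges to the value of the $l$-variable commutator acting on $g$ at $\vec{x}_{\hat{\boldsymbol{v}}}^{\,0}$, by continuity of the kernel away from the diagonal and dominated convergence. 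The only genuinely new point beyond \cite{LPPW},\cite{DP} is bookkeeping of which cone/direction is "used" in each block versus which is a free passenger, and this is exactly what the statement of the lemma is designed to record. Since every ingredient (finiteness of $\Upsilon_k$, opposing Fourier supports, $L^2$-boundedness of the cone commutators) is already supplied by Lemma \ref{lemma_testfunction_ball}, the argument is essentially a reindexing plus a routine slicing limit.
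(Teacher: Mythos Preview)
Your proposal has a genuine gap in the handling of the non-distinguished coordinates $k\in I_s\setminus\{v_s\}$. You propose to take $f_k$ simultaneously Fourier-supported in the cone $D_k$ and spatially supported in a ball of radius $\delta$ about $x_k^0$; this is impossible by Paley--Wiener. Even relaxing to approximate spatial concentration, a function whose Fourier transform lives in a proper cone cannot approximate a point mass, since $\widehat{\delta_{x_k^0}}$ has full frequency support. Consequently the limiting claim (``letting $\delta\to 0$ \ldots\ converges to the value of the $l$-variable commutator acting on $g$ at $\vec{x}_{\hat{\boldsymbol{v}}}^{\,0}$'') fails: the extra factors $T_{k,D_k}$ are genuine singular integrals in those variables and are not removed by spatial localization of $f_k$. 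Your remark about ``a block containing no distinguished coordinate'' also does not match the setup, since every $I_s$ contains exactly one $v_s$ and $T_{s,\vec{D}_s}$ is a full tensor over all $k\in I_s$.

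The paper separates the two issues. First, one works with the commutator $[T_{v_1,D_{v_1}},\ldots,[T_{v_l,D_{v_l}},b]\ldots]$ involving \emph{only} the distinguished coordinates; since no operator touches the remaining variables, this applied to $f_H\otimes g$ factors as $g\cdot[T_{v_1,D_{v_1}},\ldots,[T_{v_l,D_{v_l}},b]\ldots](f_H)$, and the multiplication operator $g\mapsto Fg$ has norm $\|F\|_\infty=\sup_{\vec{x}_{\hat{\boldsymbol{v}}}}\|[\ldots,b(\vec{x}_{\hat{\boldsymbol{v}}})]\ldots](f_H)\|_2$. No freezing limit is needed; one then replaces $g$ by $f_T=P_{\vec{D}}g$ to obtain the required Fourier support at controlled cost. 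Second, to upgrade from this thin commutator to the full $[T_{1,\vec{D}_1},\ldots,[T_{l,\vec{D}_l},b]\ldots]$, one notes that on test functions with the prescribed opposing Fourier supports both commutators collapse to $\bigotimes_s T_{v_s,D_{v_s}}(bf)$ and $\bigotimes_s T_{s,\vec{D}_s}(bf)$ respectively, and then invokes the real-variable Toeplitz identity of Lemma~\ref{lemma_realtoeplitz}: a shift argument along the cone axis gives $\|T_{\vec{D}_K}T_{D_j}bP^{\sigma}_{\vec{D}_K}P_{D_j}\|=\|T_{\vec{D}_K}bP^{\sigma}_{\vec{D}_K}\|$, so inserting the extra cone operators does not decrease the norm. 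This Toeplitz step is the essential ingredient missing from your plan.
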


Before we can begin with the proof of Lemma \ref{lemma_cone}, we will need a real variable version of the facts on Toeplitz operators used earlier. 
\begin{lemma} \label{lemma_realtoeplitz}
Let $D_{k}$ for  $1\le k\le t$ denote any cones with respect to the $k^{\text{th}}$ variable. Let $T_{D_{k}}$ denote the adapted Calder\'on-Zygmund
  operators. Let $K$ be any proper subset of $\{k: 1\le k\le t\}$, let $\vec{D}_K=\bigotimes_{k\in K} D_k$ and $T_{\vec{D}_K}$ the associated tensor product of Calder\'on-Zygmund operators. Let  $P^{\sigma}_{\vec{D}_K}$ be a tensor product of projection operators on cones $D(\xi_k,r_k)$ or opposing cones $D(-\xi_k,r_k)$. Let $j\notin K$.
Then 
  $$ \|T_{\vec{D}_K}T_{D_{j}}bP^{\sigma}_{\vec{D}_K}P_{D_j}\|_{L^2(\mathbb{R}^{\vec{d}})\righttoleftarrow}=\|T_{\vec{D}_K}bP^{\sigma}_{\vec{D}_K}\|_{L^2(\mathbb{R}^{\vec{d}})\righttoleftarrow}.$$
\end{lemma}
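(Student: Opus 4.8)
This is the real-variable analogue of Lemma \ref{ToeplitzLemma2}, so the plan is to mimic that argument, replacing the multiplication operator $W_3$ by $z_3$ and the Fourier projection $P_3$ by the cone projection $P_{D_j}$, and to exploit that $T_{D_j}$ acts as a Fourier multiplier in the $j^{\text{th}}$ variable alone. First I would record the trivial inequality ``$\le$'': since $T_{D_j}$ and $P_{D_j}$ have operator norm at most $1$ and commute with $T_{\vec{D}_K}$ and $P^{\sigma}_{\vec{D}_K}$ (they act in disjoint variables), one has
\[
\|T_{\vec{D}_K}T_{D_{j}}bP^{\sigma}_{\vec{D}_K}P_{D_j}\|\le \|T_{\vec{D}_K}bP^{\sigma}_{\vec{D}_K}\|.
\]
For the reverse inequality I would first reduce to a statement about a Toeplitz-type operator in the $x_j$ variable: the operator $B:=T_{\vec{D}_K}bP^{\sigma}_{\vec{D}_K}$ acts only in the variables indexed by $K$ (and is multiplication-type in $x_j$ through $b$), so I would like to say that $T_{\vec{D}_K}T_{D_{j}}bP^{\sigma}_{\vec{D}_K}P_{D_j}$ is, up to conjugation by translations/modulations in the $x_j$ frequency variable, an approximation of $B$.

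\textbf{Key steps.} The core of the argument is an approximate-identity/strong-operator-convergence statement. In the periodic case one used the unitaries $W_3^n$ (multiplication by $z_3^n$) and the fact that $W_3^{-n}P_3W_3^n\to I$ strongly, because $P_3W_3^nf-W_3^nf$ is the tail of a Fourier series. In $\mathbb{R}^{d_j}$ the replacement is modulation: for $\eta\in\mathbb{R}^{d_j}$ let $M_\eta$ be multiplication by $e^{2\pi i\langle\eta,x_j\rangle}$, a unitary on $L^2$. Conjugation by $M_\eta$ translates the Fourier variable $\xi_j\mapsto\xi_j+\eta$; choosing a sequence $\eta_n$ going to infinity deep inside the cone $D_j$ (along its axis $\xi_j$), the dilated/translated symbols $\chi_{D_j}(\cdot-\eta_n)$ and $\widehat{K_{D_j}}(\cdot-\eta_n)$ converge pointwise to $1$ on every compact set, hence $M_{\eta_n}^{-1}P_{D_j}M_{\eta_n}\to I$ and $M_{\eta_n}^{-1}T_{D_j}M_{\eta_n}\to I$ in the strong operator topology of $L^2(\mathbb{R}^{d_j})$ (dominated convergence, using density of compactly Fourier-supported functions and the uniform $L^\infty$ bound on the symbols). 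Since $b$ — as a multiplication operator — is invariant under conjugation by $M_{\eta_n}$ (modulation commutes with multiplication by a function of $x_j$), and $T_{\vec{D}_K},P^{\sigma}_{\vec{D}_K}$ commute with $M_{\eta_n}$ (disjoint variables), I get
\[
M_{\eta_n}^{-1}\bigl(T_{\vec{D}_K}T_{D_j}bP^{\sigma}_{\vec{D}_K}P_{D_j}\bigr)M_{\eta_n}=T_{\vec{D}_K}\bigl(M_{\eta_n}^{-1}T_{D_j}M_{\eta_n}\bigr)b\,P^{\sigma}_{\vec{D}_K}\bigl(M_{\eta_n}^{-1}P_{D_j}M_{\eta_n}\bigr)\longrightarrow B
\]
in the strong operator topology. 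By unitarity each conjugated operator has the same norm as the original, and the operator norm is lower semicontinuous under strong convergence, so $\|B\|\le\sup_n\|M_{\eta_n}^{-1}(\cdots)M_{\eta_n}\|=\|T_{\vec{D}_K}T_{D_j}bP^{\sigma}_{\vec{D}_K}P_{D_j}\|$, which is the desired ``$\ge$''.

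\textbf{Main obstacle.} The delicate point is the strong convergence $M_{\eta_n}^{-1}T_{D_j}M_{\eta_n}\to I$: unlike the sharp projection $P_{D_j}$, the mollified symbol $\widehat{K_{D_j}}$ is supported in the slightly larger cone $(1+\tau)D_j$ and is not exactly $1$ anywhere, so one must check that translating its symbol along the cone axis by $\eta_n\to\infty$ does force it to $1$ locally uniformly — this is where the precise geometry of a cone with round base (that a ball around the axis direction, dilated, eventually swallows any fixed compact set once its vertex is pushed to infinity) is used, together with $\chi_{D_j}\le\widehat{K_{D_j}}\le\chi_{(1+\tau)D_j}$. One also needs to make sure $b$ as an unbounded multiplication operator is handled on the right dense domain (say $L^2\cap L^\infty$ with compactly Fourier-supported factors), so that all the strong limits make sense; this is routine but should be stated. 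A secondary point worth a line is that $P^\sigma_{\vec D_K}$ may involve opposing cones $D(-\xi_k,r_k)$ in the $K$-variables, but since the manipulation only touches the $j^{\text{th}}$ variable this plays no role. Everything else is the bookkeeping of commuting operators acting in disjoint groups of variables.
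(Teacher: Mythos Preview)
Your proposal is correct and follows essentially the same approach as the paper: your modulation operators $M_{\eta_n}$ with $\eta_n=n\xi_j$ are exactly the paper's Fourier-side shifts $S_l$, and the core of both arguments is the strong-operator convergence $M_{\eta_n}^{-1}T_{D_j}M_{\eta_n}\to I$ (and likewise for $P_{D_j}$) obtained because the translated cones $D_j-\eta_n$ exhaust $\mathbb{R}^{d_j}$. One small slip: you write that the mollified symbol $\widehat{K_{D_j}}$ ``is not exactly $1$ anywhere,'' but in fact the construction gives $\chi_{D_j}\le\widehat{K_{D_j}}\le\chi_{(1+\tau)D_j}$, so the symbol is identically $1$ on $D_j$; this makes the convergence argument even simpler than you feared, and the paper uses exactly this observation.
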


\begin{proof}
   
We will establish this by composing some unilateral shift operators and studying their Fourier transform in the $j$ variable.
Let $\xi_j$ denote the direction of the cone $D_j$, for any $l$ define the shift operator
\[
S_l g(x_j)=\int_{\mathbb{R}^{d_j}} \hat{g}(\eta_j) e^{2\pi i(l\xi_j+\eta_j)x_j}\,d\eta_j.
\]
$S_l$ is a translation operator on the Fourier side along the direction $\xi_j$ of the cone $D_j$. It is not hard to observe that $S_l^{\ast}=S_{-l}$.
Now define
\[
A_l=S_{-l}T_{\vec{D}_K}T_{D_{j}}bP^{\sigma}_{\vec{D}_K}P_{D_{j}}S_l, \text{ and }
B=T_{\vec{D}_K}bP^{\sigma}_{\vec{D}_K}.
\]
We will prove that as $l\to +\infty$, $A_l\to B$ in the strong operator topology. As in the argument in Lemma \ref{ToeplitzLemma2}, this together with the fact that $S_l$ is an isometry will complete the proof.
To see the convergence, let's first remember that $S_l$ only acts on the $j$ variable, and one always has the identities 
\[
S_l S_{-l}=Id\,\quad\text{and} \quad S_{-l}bS_l=b.
\]
This implies
\[
\begin{split}
A_l&=T_{\vec{D}_{K}}(S_{-l}T_{D_{j}}S_{l})(S_{-l}bS_{l}){P^{\sigma}_{\vec{D}_K}}(S_{-l}P_{D_{j}}S_l)\\
&=T_{\vec{D}_{K}}(S_{-l}T_{D_{j}}S_{l})b{P^{\sigma}_{\vec{D}_K}}(S_{-l}P_{D_{j}}S_l).
\end{split}
\]
We claim that both $S_{-l}T_{D_{j}}S_{l}$ and $S_{-l}P_{D_j}S_l$ converge to the identity operator in the strong operator topology, which then implies that $A_l\to B$ as $l\to \infty$. 
We will only prove $S_{-l}T_{D_{j}}S_{l}\to Id$ as the second limit is almost identical. Observe that $\|S_{-l }T_{D_{j}}S_{l}f-f\|=\|(T_{D_{j}}-I)S_{l}f\|$.
Given any $L^2$ function $f$ and any fixed large $l\geq 0$. Consider the $f$ with frequencies supported in $\mathbb{R}^{d_1}\times \ldots \times (D_j-l\xi_j)\times \ldots \times \mathbb{R}^{d_t}$. In this case, $S_lf$ has Fourier support in $\mathbb{R}^{d_1}\times \ldots \times D_j \times \ldots \times \mathbb{R}^{d_t}$ where the symbol of $T_{D_{j}}$ equals 1. Thus, for such $f$, we have $S_{-l}T_{D_{j}}S_lf=f$. The sets $\mathbb{R}^{d_1}\times \ldots \times (D_j-l\xi_j)\times \ldots \times \mathbb{R}^{d_t}$ exhaust the frequency space. With $\|T_{D_{j}}-I\|_{2\to 2}\le 1$ the operators  $S_{-l}T_{D_{j}}S_{l}$ converge to the Identity in the strong operator topology, and the lemma is proved. Observe that the aperture of the cone $D_j$ is not relevant to the proof.
\end{proof}

We proceed with the proof of the lower estimate for cone transforms.

\begin{proof} (of Lemma \ref{lemma_cone})
For a given symbol $b\in \text{BMO}_{\mathcal{I}}$, there exist for all $1\le s\le l$ coordinates ${\boldsymbol{v}}=(v_s), v_s\in I_s$ and a choice of variables not indexed by $v_s$, $\vec{x}^{\;0}_{\hat{\boldsymbol{v}}}$ so that 
   up to an arbitrarily small error
  \[ \| b \|_{\text{BMO}_{\mathcal{I}}} = \| b ( \vec{x}^{\;0}_{\hat{\boldsymbol{v}}} )
     \|_{\text{BMO}_{(v_1)\ldots(v_l)} }. \]
  By Lemma \ref{lemma_testfunction_ball}, there exist cones $D_{v_s}=D(\xi_{v_s},r_{v_s})$ with directions $\xi_{v_s} \in \Upsilon_{v_s}$ and a normalised test function $f_H$ in variables $v_s$ with opposing Fourier support such that we have the lower estimate $$\| [ T_{v_1,D_{v_1}}, \ldots ,[ T_{v_l,D_{v_l}} ,b (  \vec{x}^{\;0}_{\hat{\boldsymbol{v}}} )
  ] \ldots] (f_H)\|_{L^{2} (\mathbb{R}^{\vec{d}_{\boldsymbol{v}}}) }
  \gtrsim 
  \|b ( \vec{x}^{\;0}_{\hat{\boldsymbol{v}}})\|_{\text{BMO}_{(v_1)\ldots(v_l)}}$$
  where $\mathbb{R}^{\vec{d}_{\boldsymbol{v}}}=  \mathbb{R}^{d_{v_1}} \times \ldots \times \mathbb{R}^{d_{v_l}}$.
  
    We now consider the commutator with the same cones but with full symbol $b=b(
  \cdot , \ldots , \cdot )$. Due to the lack of action on the variables not indexed by $v_s$,
  in the commutator, we have $$[T_{v_1,D_{v_1}},\ldots ,[T_{v_l,D_{v_l}} ,b]\ldots] (
  f_Hg ) =g \cdot [T_{v_1,D_{v_1}},\ldots ,[T_{v_l,D_{v_l}} ,b ]\ldots] ( f_H)$$ for $g$ that only depends upon variables not indexed by $v_s$. Again using that multiplication operators
  in $L^{2}$ have norms equal to the $L^{\infty}$ norm of their symbol, for
  the `worst' $L^{2}$-normalised $g$ we have
  \[
  \begin{split}
   & \|[T_{v_1,D_{v_1}},\ldots ,[T_{v_l,D_{v_l}} ,b ]\ldots] (
    f_Hg) \|_{L^{2} ( \mathbb{R}^{\vec{d}} )}\\
     &=  \sup_{\vec{x}_{\hat{\boldsymbol{v}}}}
    \|[T_{v_1,D_{v_1}},\ldots ,[T_{v_l,D_{v_l}} ,b ( \vec{x}^{\;0}_{\hat{\boldsymbol{v}}} ) ]\ldots]
    (f_H) \|_{L^{2} ( \mathbb{R}^{\vec{d}_{\boldsymbol{v}}})}\\
    & \geqslant  \|[T_{v_1,D_{v_1}},\ldots ,[T_{v_l,D_{v_l}} ,b ( \vec{x}^{\;0}_{\hat{\boldsymbol{v}}}) ]\ldots] (f_H) \|_{L^{2} ( \mathbb{R}^{\vec{d}_{\boldsymbol{v}}})}\\
    & \gtrsim  \| b ( \vec{x}^{\;0}_{\hat{\boldsymbol{v}}} ) \|_{\text{BMO}_{(v_1)\ldots(v_l)}(\mathbb{R}^{\vec{d}_{\boldsymbol{v}}})} =  \| b \|_{\text{BMO}_{\mathcal{I}}(\mathbb{R}^{\vec{d}})} .
  \end{split}
\]  
Note that the test function $g$ can be chosen with well distributed Fourier transform. 
Take any cones in the variables not indexed by $v_s$ and let $\vec{D}$ denote the tensor product of their projections. $f_T=P_{\vec{D}}g$. Notice that $$\|[T_{v_1,D_{v_1}},\ldots ,[T_{v_l,D_{v_l}} ,b ]\ldots] (
  f_H f_T ) \| \gtrsim
  \|[T_{v_1,D_{v_1}},\ldots ,[T_{v_l,D_{v_l}} ,b ]\ldots] (
  f_H g) \|$$ with constants depending upon how small the aperture of  the chosen cones is.
   Notice that the test function $f:=f_Hf_T$ has the Fourier support as required in part {\it{(2)}} of the statement of Lemma \ref{lemma_cone}. 
    
  Now build cones  $\vec{D}_s$ from the $D_{v_s}$ and the other chosen cones $D_k$ as well as operators ${T}_{s,\vec{D}_s}$. Notice that the commutators $[T_{v_1,D_{v_1}},\ldots ,[T_{v_l,D_{v_l}} ,b]\ldots]$
  and $[ T_{1,\vec{D}_{1}},\ldots , [ T_{l,\vec{D}_{l}} ,b ]\ldots ]$ reduce significantly when applied to a test function $f$ with Fourier support like ours.
  When the operators $T_{v_s,D_{v_s}}$ or any tensor product $T_{s,\vec{D}_{s}}$  
  fall directly on $f$, the contribution is zero due to opposing Fourier
  supports of the test function and the symbols of the operators. The only terms left in
   the commutators $[ T_{1,\vec{D}_{1}} ,\ldots , [ T_{l,\vec{D}_{v_l}},b ] \ldots](f) $ and $[ T_{v_1,D_{v_1}} ,\ldots, [ T_{v_l,D_{v_l}} ,b ] \ldots](f) $ have the form $\bigotimes_sT_{s,\vec{D}_{s}} (bf)$ and $\bigotimes_sT_{v_s,D_{v_s}} (bf)$ respectively. 
  
  By repeated use of Lemma \ref{lemma_realtoeplitz} we have the operator norm estimates for any
  symbol $b$, valid on the subspace of functions with Fourier support as described for $f$: $\| \bigotimes_sT_{s,\vec{D}_{s}} b \|_{2\to 2} = \| \bigotimes_sT_{v_s,D_{v_s}} b \|_{2\to 2} .$ We conclude that a normalised test function $f$ with Fourier support as described in the statement {\it{(2)}} of Lemma \ref{lemma_cone} exists, so that $\| \bigotimes_sT_{s,\vec{D}_{s}}
  (b f )\|_{2} \gtrsim \| b \|_{\text{BMO}_{\mathcal{I}}(\mathbb{R}^{\vec{d}})}$. In particular, we get the desired estimate in {\it{(1)}}.
  \end{proof}

It does not seem possible to pass directly to a lower commutator estimate for
tensor products of Riesz transforms from that for tensor products of cone
operators. Just using tensor products of operators adapted to cones merely gives us {\it some} lower bound where we are unable to control that a Riesz transform does appear in
every variable such as required in {\it (3)} of Theorem \ref{theorem_riesz}. The reason for this will
become clear as we advance in the argument. Instead of using operators $T_{s,
\vec{D}_s}$ directly, we will build upon them more general multi-parameter Journ\'e type
cone operators not of tensor product type that we now describe.

Let us explain the multiplier we
need for $i$ copies of $\mathbb{S}^{d - 1}$ when all dimensions are the same. We
will explain how to pass to the case of $i$ copies of varying dimension $d_k$ below. A picture illustrating a base case, a product of two 1-spheres, can be found at the end of this section.

For $0 < b < a < 1$, let $\varphi : [ - 1, 1] \rightarrow [ - 1, 1]$ be a
smooth function with $\varphi ( x) = 1$ when $a \leqslant x \leqslant 1$ \ and
$\varphi ( x) = 0$ when $b \geqslant x \geqslant 0$. And let $\varphi$ be odd,
meaning antisymmetric about $t = 0$. The function $\varphi$ gives rise to a
zonal function with pole $\xi_1$ on the first copy of $\mathbb{S}^{d - 1}$,
denoted by $C_1 ( \xi_1 ; \eta_1)$. This is the multiplier of a one-parameter
Calder\'on-Zygmund operator adapted to a cone $D ( \xi_1, r)$ for $r=\pi/2(1-a)$. 
For $i > 1$ we define $C_k ( \xi_1, \ldots, \xi_k ; \eta_1, \ldots,
\eta_k)$ for $1 < k \leqslant i$ inductively. In what follows, expectation is taken with
respect to traces of surface measure. When $\eta_i = \pm \xi_i$, then
conditional expectation is over a one-point set. 
\[ 
\begin{split}
&C_k ( \xi_1, \ldots, \xi_k ; \eta_1, \ldots, \eta_k)\\
 &=\mathbb{E}_{a_{k -
   1}} ( C_{k - 1} ( \xi_1, \ldots, a_{k - 1} ; \eta_1, \ldots, \eta_{k
   - 1}) \mid d ( a_{k - 1}, \xi_{k - 1}) = d ( \eta_k, \xi_k)) . 
   \end{split}
   \]
If the dimensions are not equal take $d=\max_{d_j}$ and imbed $\mathbb{S}^{d_j-1}$ into $\mathbb{S}^{d-1}$ by the map $\xi=(\xi_1,\ldots ,\xi_{d_j}) \mapsto (\xi_1,\ldots ,\xi_{d_j},0,\ldots ,0)$. 
Obtain in this manner the function $C_i$ and then restrict to the original number of variables when the dimension is smaller than $d$.

The multiplier $\vec{J}=C_i (\vec{\xi} ; \cdot)$ gives rise to a multi-parameter Calder\'on-Zygmund operator of convolution type (but not of tensor product type),
$\vec{T}_{\vec{J}}=\vec{T}_{C_i ( \vec{\xi} ; \cdot)}$. 
In fact, it is defined through
principal value convolution against a kernel $\vec{K}_{\vec{J}}=\vec{K}_{C_i(\vec{\xi};\cdot)} ( x_1, \ldots , x_i)$ such that
\[ \forall l : \int_{\alpha < | x_l | < \beta} \vec{K}_{\vec{J}} ( x_1, \ldots , x_i) d x_l = 0, 
\forall 0 < \alpha < \beta , x_j \in
   \mathbb{R}^{d_j} \text{ fixed } \forall j \neq l, \]
   
\[ | \frac{\partial^{| \vec{n} |}}{\partial x^{n_1}_1 \ldots \partial
   x^{n_i}_i} \vec{K}_{\vec{J}}( x_1, \ldots, x_i) | \leqslant
   A_{\vec{n}} | x_1 |^{- d_1 - n_1} \ldots | x_i |^{- d_i - n_i}, n_j \geqslant
   0. \]
   
   \medskip
   
This kind of operator is a special case of the more general, non-convolution
type discussed in Section \ref{section_upper}. It has many other nice features that will
facilitate our passage to Riesz transforms. One of them is its very special
representation in terms of homogeneous polynomials, the other one a lower commutator estimate in terms of the $\text{BMO}_{\mathcal{I}}$ norm.

\begin{lemma}\label{lemma_homogeneous polynomials}
  Let $C_i$ be a multiplier in $\bigotimes_{k=1}^{i}\mathbb{R}^{d_k}$ as described above, with any fixed direction and aperture. Let $m$ be an integer of order $d=\max{d_k}$. For any $\delta>0$, 
  the function $C_i$ has an approximation by a polynomial $C^{N}_i$ in the $\prod^i_{k = 1} d_k$
  variables $\{\prod_{k:1 \leqslant k  \leqslant i} \eta_{k, j_k}\mid  1
  \leqslant j_k \leqslant d_k \}$ so that $\|C_i-C^{N}_i\|_{\mathcal{C}^m(\mathbb{S}^{d_k-1})}<\delta$ in each variable separately.
  \end{lemma}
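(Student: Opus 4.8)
The plan is to reduce the statement to a classical approximation fact about zonal functions on the sphere and then to track how that approximation propagates through the inductive definition of $C_i$. First I would recall that any zonal function on $\mathbb{S}^{d-1}$ with pole $\xi$ is, by definition, a function of $\langle \eta,\xi\rangle$ alone, hence can be written as $g(\langle\eta,\xi\rangle)$ for a smooth $g:[-1,1]\to\mathbb{R}$; by Weierstrass approximation applied to $g$ (together with its derivatives, using that polynomials are dense in $\mathcal{C}^m[-1,1]$), for every $\delta>0$ there is a polynomial $p$ with $\|g-p\|_{\mathcal{C}^m[-1,1]}<\delta$, and then $p(\langle\eta,\xi\rangle)$ is a polynomial in the coordinates $\eta_{j}$, $1\le j\le d$, that approximates $C_1(\xi;\eta)$ to order $m$ in the $\mathcal{C}^m(\mathbb{S}^{d-1})$ norm. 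This handles the base case $i=1$; note the homogeneity/degree is irrelevant for the statement, only that we land in the polynomial ring generated by the coordinate functions, which for $i=1$ is exactly $\{\eta_{1,j_1}: 1\le j_1\le d_1\}$.

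Next I would run the induction on $k$. Suppose $C_{k-1}(\xi_1,\dots,\xi_{k-1};\eta_1,\dots,\eta_{k-1})$ has been approximated to order $m$, in each variable separately, by a polynomial $C^N_{k-1}$ in the variables $\{\prod_{1\le j\le k-1}\eta_{j,l_j}\}$. The definition of $C_k$ applies two operations to $C_{k-1}$: first a substitution of $a_{k-1}$ for $\xi_{k-1}$ constrained by $d(a_{k-1},\xi_{k-1})=d(\eta_k,\xi_k)$, and second a conditional expectation $\mathbb{E}_{a_{k-1}}$ over the resulting sub-sphere (the trace of surface measure on the set of $a_{k-1}$ at fixed geodesic distance from $\xi_{k-1}$). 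The key observations are: (a) conditional expectation is an averaging operator of norm $1$, so $\|\mathbb{E}_{a_{k-1}}(C_{k-1}-C^N_{k-1})\|_{\mathcal{C}^m}\le \|C_{k-1}-C^N_{k-1}\|_{\mathcal{C}^m}$ in each variable, because differentiation in the variables $\eta_1,\dots,\eta_{k-2},\eta_k$ commutes with the averaging in $a_{k-1}$ and differentiation in $\eta_k$ only enters through the smooth constraint $d(a_{k-1},\xi_{k-1})=d(\eta_k,\xi_k)$; and (b) applying $\mathbb{E}_{a_{k-1}}$ to the polynomial $C^N_{k-1}$ produces, after carrying out the average of monomials in $a_{k-1}$ over the sub-sphere, a new expression that is polynomial in $\eta_k$ (the dependence on $\eta_k$ enters polynomially through $\langle a_{k-1},\xi_{k-1}\rangle=\cos d(\eta_k,\xi_k)$-type quantities, which one re-expands) and retains polynomial dependence on $\eta_1,\dots,\eta_{k-1}$; combined, this is a polynomial in $\{\prod_{1\le j\le k}\eta_{j,l_j}\}$. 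Choosing the approximation error at stage $k-1$ smaller than $\delta$ then gives the $\mathcal{C}^m$ error at stage $k$ bounded by (a constant times) $\delta$, and one absorbs the constant by starting with a correspondingly smaller $\delta$.

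The main obstacle I expect is step (b): verifying carefully that the conditional expectation over the sub-sphere $\{a_{k-1}:d(a_{k-1},\xi_{k-1})=\rho\}$ of a monomial in the coordinates of $a_{k-1}$ is again a polynomial in $\cos\rho$ (equivalently in $\langle\eta_k,\xi_k\rangle$) times a polynomial in the remaining variables, with coefficients that are themselves polynomial in $\xi_{k-1}$ and in the fixed aperture data — this is essentially the statement that integrating $\mathbb{S}^{d-2}$-valued zonal harmonics gives Gegenbauer polynomials, and one must check it is compatible with the $\mathcal{C}^m$ estimate uniformly. A secondary technical point is the dimension-matching preamble: when the $d_k$ are unequal one embeds $\mathbb{S}^{d_j-1}\hookrightarrow\mathbb{S}^{d-1}$, runs the above on $\mathbb{S}^{d-1}$, and then restricts; since restriction to a coordinate subsphere only decreases $\mathcal{C}^m$ norms and sends polynomials to polynomials in the surviving coordinates, the estimate and the polynomial structure both survive, so this adds only bookkeeping. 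I would present the argument as: base case via Weierstrass with derivatives, inductive step via the norm-$1$ property of conditional expectation plus the Gegenbauer-type closure of polynomials under spherical averaging, and a closing remark on the embedding/restriction reduction.
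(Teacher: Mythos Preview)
Your step (b) contains a real gap. You argue that the conditional average of $C^N_{k-1}$ is polynomial in $\eta_k$ (through $\langle\eta_k,\xi_k\rangle$) and remains polynomial in $\eta_1,\ldots,\eta_{k-1}$, and then assert that ``combined, this is a polynomial in $\{\prod_{1\le j\le k}\eta_{j,l_j}\}$.'' But membership in the ring generated by the product variables $\prod_j\eta_{j,l_j}$ is much stronger than being polynomial in each $\eta_j$ separately: it forces every monomial to have \emph{equal total degree} in each of the $\eta_j$. A generic average of a monomial in $a_{k-1}$ over the sub-sphere at distance $d(\eta_k,\xi_k)$ from $\xi_{k-1}$ produces terms of varying degree in $\langle\eta_k,\xi_k\rangle$, and nothing you wrote forces these to match the degree in $\eta_{k-1}$; the Gegenbauer remark does not address this. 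There is also a prior gap: your inductive hypothesis concerns $C^N_{k-1}$ as a polynomial in the $\eta$-variables, but the inductive step substitutes $a_{k-1}$ for the \emph{pole} $\xi_{k-1}$, so you must also track the dependence of $C^N_{k-1}$ on its pole arguments, which you have not done.

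The paper handles both points at once by expanding $C_1$ in zonal harmonics, $C_1(\xi_1;\eta_1)=\sum_n\varphi_n Z^{(n)}_{\xi_1}(\eta_1)$, and then invoking the separately proved identity
\[
\mathbb{E}_{a}\bigl(Z^{(n)}_{\eta}(a)\,\bigm|\,d(\xi,a)=d(\xi',\eta')\bigr)=Z^{(n)}_{\xi}(\eta)\,Z^{(n)}_{\xi'}(\eta'),
\]
established via the characterization of zonal harmonics by rotational invariance. This yields inductively the closed form $C^{(N)}_i(\vec\xi;\vec\eta)=\sum_{n\le N}\varphi_n\prod_{k=1}^i Z^{(n)}_{\xi_k}(\eta_k)$; since each $Z^{(n)}_{\xi_k}(\eta_k)$ is $n$-homogeneous in $\eta_k$, every monomial in the product has degree exactly $n$ in every variable, which is precisely the equal-degree condition that lands you in the ring generated by $\{\prod_k\eta_{k,j_k}\}$. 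Your Weierstrass polynomial $p(\langle\eta,\xi\rangle)$ \emph{is} a finite zonal harmonic sum, so your base case can feed into this route---but the inductive step requires that factorization identity, not a soft averaging bound.
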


$\mathcal{C}^m$ indexes the norm of uniform convergence on functions that are $m$ times continuously differentiable.
On the space side, $C^{N}_i$ corresponds to an
  operator that is a polynomial in Riesz transforms of the variables
  $\bigotimes_k R_{k, j_k}$.

\begin{lemma}\label{lemma_lowerbd_Journe}
  We are in $\mathbb{R}^{\vec{d}}$ with partition $\mathcal{I}=(I_s)_{1\le s\le l}$. Let $\vec{\Upsilon}$ consist of vectors $\vec{\xi} = ( \xi_k)^t_{k = 1}$
  with $\xi_k \in \Upsilon_k$. Let $\vec{\Upsilon}^{(s)}$ consist of $\vec{\xi}^{( s)} = ( \xi_k)_{k \in I_s}$.
  Let us consider the class of Journ{\'e} type cone multipliers $\vec{J}_s = C_{i_s} (
  \vec{\xi}^{( s)} ; \cdot)$ of aperture $r_s$ with associated multi-parameter
  Calder\'on-Zygmund operators $\vec{T}_{s, \vec{J}_s}$. Then we have
  the two-sided estimate
  \[ \| b \|_{\text{BMO}_{\mathcal{I}} ( \mathbb{R}^{\vec{d}})} \lesssim
     \sup_{\vec{\xi} \in \vec{\Upsilon}} 
     \| [ \vec{T}_{1,\vec{J}_1}, \ldots, [ \vec{T}_{l, \vec{J}_l}, b] \ldots]
     \|_{L^2 ( \mathbb{R}^{\vec{d}}) \righttoleftarrow} \lesssim \| b \|_{\text{BMO}_{\mathcal{I}} (
     \mathbb{R}^{\vec{d}})}. \]
\end{lemma}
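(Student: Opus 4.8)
The plan is to get the upper estimate from the general theory of paraproduct-free Journ\'e commutators, and to get the lower estimate by upgrading the cone-operator lower bound of Lemma \ref{lemma_cone} using the \emph{plateau structure} of the multipliers $C_{i_s}$. Throughout I write $\vec{T}_s$ for $\vec{T}_{s,\vec{J}_s}$. \emph{Upper bound.} Each $\vec{T}_s$ is a convolution-type multi-parameter Calder\'on-Zygmund operator whose kernel has vanishing integral in each variable separately, hence is a paraproduct-free Journ\'e operator acting on the variables indexed by $I_s$; since $\vec{\Upsilon}$ is finite the relevant constants are uniform over $\vec{\xi}\in\vec{\Upsilon}$. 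Thus $\sup_{\vec{\xi}\in\vec{\Upsilon}}\|[\vec{T}_1,\ldots,[\vec{T}_l,b]\ldots]\|_{L^2(\mathbb{R}^{\vec{d}})\righttoleftarrow}\lesssim\|b\|_{\text{BMO}_{\mathcal{I}}(\mathbb{R}^{\vec{d}})}$ will follow from the qualitative upper bound for iterated commutators with paraproduct-free Journ\'e operators established in Section \ref{section_upper}, applied with the partition $\mathcal{I}$.

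\emph{Lower bound, setup.} Given $b\in\text{BMO}_{\mathcal{I}}$, I would first run the opening of the proof of Lemma \ref{lemma_cone}: there are coordinates $\boldsymbol{v}=(v_s)$ with $v_s\in I_s$, directions $\xi_{v_s}\in\Upsilon_{v_s}$, directions $\xi_k\in\Upsilon_k$ for $k\notin\boldsymbol{v}$, cones $D_{v_s}=D(\xi_{v_s},r_{v_s})$ and $D_k=D(\xi_k,r_k)$, and a normalised test function $f=\bigotimes_{k=1}^{t}f_k$ with $\widehat{f_k}$ supported in the opposing cone $D(-\xi_k,r_k)$ when $k=v_s$ and in $D_k$ otherwise, for which the reduction carried out there yields $\|\bigotimes_{s=1}^{l}T_{s,\vec{D}_s}(bf)\|_2\gtrsim\|b\|_{\text{BMO}_{\mathcal{I}}}$, where $\vec{D}_s=\bigotimes_{k\in I_s}D_k$ and $T_{s,\vec{D}_s}=\bigotimes_{k\in I_s}T_{D_k}$. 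Next I would fix the apertures $r_s$ of the multipliers $\vec{J}_s=C_{i_s}(\vec{\xi}^{(s)};\cdot)$ at some common value below $\pi/2$ and, shrinking the apertures $r_k$ of Lemma \ref{lemma_cone} if necessary (refining the finite families $\Upsilon_k$), arrange that for every $s$ one has $C_{i_s}\equiv 1$ on $\prod_{k\in I_s}(1+\tau)D(\xi_k,r_k)$ and $C_{i_s}\equiv -1$ on $D(-\xi_{v_s},r_{v_s})\times\prod_{k\in I_s\setminus\{v_s\}}D(\xi_k,r_k)$. These are the two relevant plateaus of $C_{i_s}$: the inductive construction shows that whenever a bounded sum of geodesic distances of the $\eta_k$ from the poles $\pm\xi_k$ stays below a threshold fixed by the aperture, $C_{i_s}$ is locally constant and equal to $(-1)^{\#\{k:\,\eta_k\text{ near the opposing pole}\}}$; the first region is the all-poles plateau (value $1$), the second the plateau where only $v_s$ sits near the opposing pole (value $-1$).

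\emph{Lower bound, expansion.} Because the $\vec{T}_s$ act on pairwise disjoint blocks of variables,
\[
[\vec{T}_1,\ldots,[\vec{T}_l,b]\ldots]=\sum_{A\sqcup B=\{1,\ldots,l\}}(-1)^{|B|}\Big(\prod_{s\in A}\vec{T}_s\Big)M_b\Big(\prod_{s\in B}\vec{T}_s\Big).
\]
For $s\in B$ the operator $\vec{T}_s$ hits $f$ directly; since $C_{i_s}\equiv -1$ on the Fourier support of $\bigotimes_{k\in I_s}f_k$ we get $\vec{T}_s f=-f$, so $(\prod_{s\in B}\vec{T}_s)f=(-1)^{|B|}f$ and each summand contributes $(\prod_{s\in A}\vec{T}_s)(bf)$. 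Summing over $A\subseteq\{1,\ldots,l\}$ gives $[\vec{T}_1,\ldots,[\vec{T}_l,b]\ldots](f)=\prod_{s=1}^{l}(I+\vec{T}_s)(bf)$. (As usual one first assumes $b$ bounded so that $bf\in L^2$, removing this at the end.)

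\emph{Lower bound, conclusion.} Now $\prod_{s=1}^{l}(I+\vec{T}_s)$ is Fourier multiplication by $\prod_{s}(1+C_{i_s})$, taking values in $[0,2^l]$, while $\bigotimes_{s=1}^{l}T_{s,\vec{D}_s}$ is multiplication by $\prod_{k=1}^{t}\widehat{K_{D_k}}\in[0,1]$, supported in $\prod_k(1+\tau)D(\xi_k,r_k)$, where $C_{i_s}\equiv 1$ for every $s$; hence $\prod_{s}(1+C_{i_s})\ge 2^l\prod_k\widehat{K_{D_k}}$ pointwise on the frequency side. Since $\|f\|_2=1$,
\[
\big\|[\vec{T}_1,\ldots,[\vec{T}_l,b]\ldots]\big\|_{L^2(\mathbb{R}^{\vec{d}})\righttoleftarrow}\ \ge\ 2^l\Big\|\bigotimes_{s=1}^{l}T_{s,\vec{D}_s}(bf)\Big\|_2\ \gtrsim\ \|b\|_{\text{BMO}_{\mathcal{I}}(\mathbb{R}^{\vec{d}})},
\]
and since all $\xi_k\in\Upsilon_k$ we have $\vec{\xi}=(\xi_k)\in\vec{\Upsilon}$, which gives the claimed lower bound. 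The main obstacle — and the reason these special operators are needed in place of tensor products of cone operators — is that $\vec{T}_s$ does not annihilate the test function: the odd profile $\varphi$ makes it act as $-1$ there rather than as $0$, so all $2^l$ commutator terms survive; the construction of $C_{i_s}$ is arranged precisely so that these reassemble into the multiplier $\prod_s(I+\vec{T}_s)$, whose absolute value dominates the tensor cone multiplier of Lemma \ref{lemma_cone}. The only further point to monitor is the compatibility of the plateau sizes of the $C_{i_s}$ with the apertures produced by the \cite{DP}-type decomposition behind Lemma \ref{lemma_cone}, which is handled by refining the finite direction sets.
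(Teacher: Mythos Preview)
Your argument is correct and follows the same route as the paper's proof: the upper bound is deferred to Section~\ref{section_upper}, and for the lower bound you invoke Lemma~\ref{lemma_cone}, use the $\pm 1$ plateaus of $C_{i_s}$ to reduce $[\vec{T}_1,\ldots,[\vec{T}_l,b]\ldots](f)$ to $\prod_s(I+\vec{T}_s)(bf)$, and then compare multipliers. The only cosmetic differences are that the paper first replaces each $\vec{T}_s$ by $\vec{T}_s+\vec{Id}_s$ (which leaves the commutator unchanged) and then observes $(\vec{T}_s+\vec{Id}_s)f=0$, whereas you expand directly and use $\vec{T}_s f=-f$; and the paper arranges the aperture compatibility by keeping the Hankel apertures $r_{v_s}$ from \cite{DP} fixed and shrinking only the free Toeplitz apertures $r_k$, $k\neq v_s$, rather than ``refining $\Upsilon_k$'' --- your phrasing there is slightly loose, since the $r_{v_s}$ cannot be shrunk without rerunning the \cite{DP} argument, but choosing $r_s$ close enough to $\pi/2$ makes your setup work as written.
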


In order to proceed with the proof of these lemmas, we will use
some well known facts about zonal harmonics. 
Fix a pole $\xi \in \mathbb{S}^{d - 1}$. The zonal harmonic with pole $\xi$ of
degree $n$ is written as $Z^{( n)}_{\xi} ( \eta)$. With $t = \langle \xi, \eta
\rangle \in [ - 1, 1]$, one writes $Z^{( n)}_{\xi} ( \eta) = P_n ( t)$ where
$P_n$ is the Legendre polynomial of degree $n.$ It is common to suppress the
dependence on $d$ in the notation for $Z^{( n)}_{\xi}$ and $P_n$. 


$Z^{( n)}_{\xi}$ are reproducing for spherical harmonics of degree
$n$, $Y^{( n)}$.
When $Y^{( n)}$ is harmonic and homogeneous of degree $n$ with $Y^{( n)} (
\xi) = 1$ and $Y^{( n)} ( R \eta) = Y^{( n)} ( \eta)$ for any rotation $R \in
\mathcal{O} ( d)$ with $R \xi = \xi$, then $Y^{( n)} = Z_{\xi}^{( n)}$.

The lemma below will aid us in understanding the special form of the
functions $C_i$. 
\begin{lemma}\label{lemma_cosinus_average} Let $\xi_1, \xi_2 \in \mathbb{S}^{d-1}$. We have
  \[ 
  \begin{split}
  Z_{\xi_1}^{( n)} ( \eta_1)Z_{\xi_2}^{( n)} ( \eta_2) &=
  \mathbb{E}_{a_1} ( Z_{\eta_1}^{( n)} ( a_1) \mid d ( \xi_1, a_1) = d (
     \xi_2, \eta_2)) \\
     &=\mathbb{E}_{a_2} ( Z_{\eta_2}^{( n)} ( a_2) \mid d (
     \xi_2, a_2) = d ( \xi_1, \eta_1)). 
     \end{split}
     \]
\end{lemma}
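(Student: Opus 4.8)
The plan is to realize both conditional expectations as a single rotation-equivariant averaging operator on $\mathbb{S}^{d-1}$ applied to a zonal harmonic, and then to identify that operator on each space of spherical harmonics using Schur's lemma together with the normalization $P_n(1)=1$.

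First I would set up the averaging operator. For $t\in[-1,1]$ and $\xi\in\mathbb{S}^{d-1}$ let $\Sigma_{\xi,t}=\{a\in\mathbb{S}^{d-1}:\langle\xi,a\rangle=t\}$, equipped with the (normalized) trace of surface measure; this is a $(d-2)$-sphere, degenerating to the single point $\{\pm\xi\}$ when $t=\pm1$. For $g\in C(\mathbb{S}^{d-1})$ put $(A_tg)(\xi):=\int_{\Sigma_{\xi,t}}g\,d\sigma=\mathbb{E}_a\big(g(a)\mid\langle\xi,a\rangle=t\big)$. Since $R\,\Sigma_{\xi,t}=\Sigma_{R\xi,t}$ for every $R\in\mathcal{O}(d)$ and surface measure is rotation invariant, $A_t$ commutes with the action of $\mathcal{O}(d)$ on $L^2(\mathbb{S}^{d-1})$; in particular it commutes with the Laplace--Beltrami operator and hence maps each eigenspace $\mathcal{H}_n$ (spherical harmonics of degree $n$) into itself. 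As $\mathcal{H}_n$ is an irreducible $\mathcal{O}(d)$-module, Schur's lemma forces $A_t|_{\mathcal{H}_n}=c_n(t)\,\mathrm{Id}$ for some scalar $c_n(t)$. To evaluate $c_n(t)$ I would test against the zonal harmonic $Z_\xi^{(n)}=P_n(\langle\xi,\cdot\rangle)\in\mathcal{H}_n$: on the one hand $(A_tZ_\xi^{(n)})(\xi)=c_n(t)\,Z_\xi^{(n)}(\xi)=c_n(t)P_n(1)=c_n(t)$, and on the other hand $\langle\xi,a\rangle\equiv t$ on $\Sigma_{\xi,t}$ gives $(A_tZ_\xi^{(n)})(\xi)=\int_{\Sigma_{\xi,t}}P_n(\langle\xi,a\rangle)\,d\sigma(a)=P_n(t)$. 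Thus $c_n(t)=P_n(t)$, so for all $\eta,\zeta\in\mathbb{S}^{d-1}$,
\[ \mathbb{E}_a\big(Z_\eta^{(n)}(a)\mid\langle\zeta,a\rangle=t\big)=(A_tZ_\eta^{(n)})(\zeta)=P_n(t)\,Z_\eta^{(n)}(\zeta)=P_n(t)\,P_n(\langle\zeta,\eta\rangle). \]

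Finally I would specialize. Taking $\zeta=\xi_1$, $\eta=\eta_1$ and $t=\langle\xi_2,\eta_2\rangle$, and using that $u\mapsto\arccos u$ is a bijection of $[-1,1]$ onto $[0,\pi]$ so that the condition $\langle\xi_1,a_1\rangle=t$ is exactly $d(\xi_1,a_1)=d(\xi_2,\eta_2)$, the displayed identity becomes
\[ \mathbb{E}_{a_1}\big(Z_{\eta_1}^{(n)}(a_1)\mid d(\xi_1,a_1)=d(\xi_2,\eta_2)\big)=P_n(\langle\xi_2,\eta_2\rangle)\,P_n(\langle\xi_1,\eta_1\rangle)=Z_{\xi_1}^{(n)}(\eta_1)\,Z_{\xi_2}^{(n)}(\eta_2), \]
which is the first equality of the lemma; the second follows by interchanging the two copies of $\mathbb{S}^{d-1}$ (both conditional expectations equal the symmetric product $Z_{\xi_1}^{(n)}(\eta_1)Z_{\xi_2}^{(n)}(\eta_2)$). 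I do not anticipate a genuine obstacle: the whole argument is carried by the rotation invariance of $A_t$ and $P_n(1)=1$. The only points requiring a word of care are the degenerate cases $t=\pm1$, where $\Sigma_{\xi,t}=\{\pm\xi\}$ and the identity reduces to $P_n(\pm1)=(\pm1)^n$ together with $Z_{\eta_1}^{(n)}(\pm\xi_1)=(\pm1)^nZ_{\xi_1}^{(n)}(\eta_1)$, consistent with the expectation over a one-point set used in the definition of $C_k$. (Alternatively, one could quote the Funk--Hecke theorem and pass to the weak limit of its multipliers as the kernel tends to $\delta_t$, obtaining the same scalar $P_n(t)$ after normalizing so that $A_t\mathbf{1}=\mathbf{1}$.)
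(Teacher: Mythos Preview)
Your proof is correct and takes a genuinely different route from the paper's. The paper first argues directly, via a change of variable using rotations fixing $\xi$, that the two conditional expectations coincide; only then does it identify the common value by observing that the expectation, as a function of $\eta_1$, is $n$-homogeneous and harmonic (hence a spherical harmonic of degree $n$), while as a function of $\eta_2$ via the other expectation it depends only on $d(\xi_1,\eta_1)$ --- forcing it to be a multiple of $Z_{\xi_1}^{(n)}(\eta_1)$, and then $c_n=1$ by the normalization $Z_\xi^{(n)}(\xi)=1$. By contrast, you package the averaging as a single $\mathcal{O}(d)$-intertwining operator $A_t$ on $L^2(\mathbb{S}^{d-1})$, invoke Schur's lemma on the irreducible module $\mathcal{H}_n$ to get $A_t|_{\mathcal{H}_n}=c_n(t)\,\mathrm{Id}$, and read off $c_n(t)=P_n(t)$ by evaluating at the pole. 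This yields the product formula immediately, with the equality of the two expectations falling out as a corollary rather than requiring its own change-of-variable argument. Your approach is essentially a Funk--Hecke computation and is cleaner and more conceptual; the paper's approach is more elementary in that it avoids representation theory and instead uses only the defining characterization of zonal harmonics (the unique harmonic of degree $n$ invariant under the stabilizer of the pole). Both arrive at the same constant via $P_n(1)=1$.
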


\begin{proof}
  The first equality is a change of variable, thanks to symmetry of the zonal harmonic in its variables and invariance
  with respect to action of the measure preserving elements of the orthogonal
  group fixing poles $\xi_1$ or $\xi_2$, that we now detail. By a rotation in one of the spheres, assume $\xi_1=\xi_2=\xi$. Take a small ball $$B_{\xi,\eta_1}(a^0_2;\varepsilon_2)=\{a_2:d(a_2,a_2^0)<\varepsilon_2\}\cap \{a_2:d(a_2,\xi)=d(\eta_1,\xi)\}.$$ Note $\{a_2:d(a_2,\xi)=d(\eta_1,\xi)\}\sim\mathbb{S}^{d-2}$. Every $a_2\in B_{\xi,\eta_1}(a^0_2;\varepsilon_2)$ gives rise to a canonical orthogonal map $\sigma_{a_2}$ along geodesics in a scaled copy of $\mathbb{S}^{d-2}$. Lifted to $\mathbb{S}^{d-1}$, these are orthogonal maps fixing $\xi$. Let $\sigma^0$ fix $\xi$ and map $a_2^0$ to $\eta_1$. Let $a_1^0=\sigma^0(\eta_2)$. We observe that $\{\sigma^0 \sigma_{a_2}(\eta_2):a_2\in B_{\xi,\eta_1}(a^0_2;\varepsilon_2)\}=B_{\xi,\eta_2}(a^0_1;\varepsilon_1)$ with $\varepsilon_1$ so that $$\mathbb{P}(d(a_2,a_2^0)<\varepsilon_2 \mid d(\xi,a_2)=d(\xi,\eta_1))=\mathbb{P}(d(a_1,a_1^0)<\varepsilon_1 \mid d(\xi,a_1)=d(\xi,\eta_2)).$$ Together with the symmetry and the rotation property $Z^{(n)}_{\eta}(a)=Z^{(n)}_{a}(\eta)=Z^{(n)}_{\sigma(a)}(\sigma(\eta))$, we obtain the first equality.

  For fixed $a_1$, the function $Z_{\eta_1}^{( n)} ( a_1) =
  Z^{( n)}_{a_1} ( \eta_1)$ is a function harmonic in $\mathbb{R}^d$,
  $n$-homogeneous. These properties are preserved when taking expectation in
  $a_1$. So the expression
  $ \mathbb{E} ( Z^{( n)}_{\eta_1} ( a_1) \mid  d ( \xi_1, a_1) = d (
     \xi_2, \eta_2)) $
  remains harmonic (regarded as a function in $\mathbb{R}^d$),
  $n$-homogeneous. From the form
  $\mathbb{E} ( Z_{\eta_2}^{( n)} ( a_2) \mid d ( \xi_2, a_2) = d ( \xi_1,
     \eta_1)) $
  we learn that its restriction to $\mathbb{S}^{d - 1}$ depends only upon $d
  ( \xi_1, \eta_1)$. This implies that it is a constant multiple of the zonal
  harmonic with pole $\xi_1$. Exchanging the roles of $\eta_1$ and $\eta_2$
  gives
  \[ \mathbb{E} ( Z^{( n)}_{\eta_1} ( a_1) \mid d ( \xi_1, a_1) = d (
     \xi_2, \eta_2)) = c_n Z_{\xi_1}^{( n)} ( \eta_1) Z_{\xi_2}^{( n)} (
     \eta_2) . \]
  When assuming the normalization $Z^{( n)}_{\xi} ( \xi) = 1$ then $c_n = 1$. 

  This is a gernalisation of the classical symmetrising of the cosinus sum formula $1/2(\cos(x+y)+\cos(x-y))=\cos(x)\cos(y)$.

\end{proof}


\begin{proof}(of Lemma \ref{lemma_homogeneous polynomials})
It is well known that zonal harmonic series have convergence properties when
representing smooth zonal functions similar to that of the Fourier transform. For any given $m$ and sufficiently smooth $\varphi$ of the type
described above, then
\[ C_1 ( \xi_1 ; \eta_1) = \sum_n \varphi_n Z_{\xi_1}^{( n)} ( \eta_1) \]
where the convergence is $\mathcal{C}^m$-uniform. The degree of smoothness
required for $\varphi$ to obtain convergence in the $\mathcal{C}^m$ in the
above expression depends upon $m$ and the dimension $d$. For our purpose, we
choose $m \geqslant d$. 


Let us denote this function's representation of degree $N$ by a series of
zonal harmonics by $C^{( N)}_1 ( \xi_1 ; \eta_1)$.
\[ C^{( N)}_1 ( \xi_1 ; \eta_1) = \sum_{n \leqslant N} \varphi_n Z^{(
   n)}_{\xi_1} ( \eta_1) . \]
For every $\delta> 0$ there exists $N$ so that we have the estimate
\[ \| C^{( N)}_1 ( \xi_1 ; \eta_1) - C_1 ( \xi_1 ; \eta_1) \|_{\mathcal{C}^m (\mathbb{S}^{d_1-1})} < \delta. \]
In the case of $i$ copies of spheres, we define $C_i^{(N)}$ inductively in
the same manner as $C_i$.  Let us for the moment make all dimensions equal using the argument discussed above. So we set
\[ 
\begin{split}
&C_k^{( N)} ( \xi_1, \ldots, \xi_k ; \eta_1, \ldots, \eta_k)\\
   &=\mathbb{E}_{a_{k - 1}} ( C^{( N)}_{k - 1} ( \xi_1,  \ldots, a_{k -
   1} ; \eta_1, \ldots, \eta_{k - 1}) \mid d ( a_{k - 1}, \xi_{k - 1}) = d (
   \eta_k, \xi_k)). 
   \end{split}
   \]

 We claim the identity
  \begin{equation}\label{cone_identity} C^{( N)}_i ( \vec{\xi} ; \eta_1, \eta_2, \ldots, \eta_i) = \sum_{n
     \leqslant N} \varphi_n \prod_{k = 1}^i Z_{\xi_k}^{( n)} ( \eta_k) . 
     \end{equation}

  This is trivially true for $i = 1$. For $i > 1$
  induct on the number of parameters:
  \[
  \begin{split}
    &   C_i^{( N)} ( \vec{\xi} ; \eta_1, \ldots, \eta_i)\\
    & =  \mathbb{E}_{a_{i - 1}} ( C_{i - 1} ( \xi_1, \xi_2, \ldots, a_{i -
    1} ; \eta_1, \ldots, \eta_{i - 1}) \mid d ( a_{i - 1}, \xi_{i - 1}) = d (
    \eta_i, \xi_i))\\
    & =  \mathbb{E}_{a_{i - 1}} \left( \sum_{n \leqslant N} \varphi_n
    \prod_{k = 1}^{i - 1} Z_{\xi_k}^{( n)} ( \eta_k) \mid d ( a_{i - 1},
    \xi_{i - 1}) = d ( \eta_i, \xi_i) \right)\\
    & =  \sum_{n \leqslant N} \varphi_n \prod_{k = 1}^{i - 2} Z_{\xi_k}^{(
    n)} ( \eta_k) \mathbb{E}_{a_{i - 1}} ( Z_{\xi_{i - 1}}^{( n)} \mid d (
    a_{i - 1}, \xi_{i - 1}) = d ( \eta_i, \xi_i))\\
    & =  \sum_{n \leqslant N} \varphi_n \prod_{k = 1}^i Z_{\xi_k}^{( n)} (
    \eta_k) .
  \end{split}
  \]
  The first equality is the definition of $C^{( N)}_i$, the second one is the
  induction hypothesis and the last an application of Lemma \ref{lemma_cosinus_average}.

 It follows that neither $C_i$ nor $C^{(N)}_i$ depend on the order chosen in their definition and
  \[ C_i ( \vec{\xi} ; \eta_1, \ldots, \eta_i) = \sum_n \varphi_n \prod_{k =
     1}^i Z_{\xi_k}^{( n)} ( \eta_k) \]
  where the convergence is in $\mathcal{C}^m$ in each variable.

  Next, we study the terms arising in multipliers of the form $C^{( N)}_i$.
When all dimensions are equal, indeed, $\prod^i_{k = 1} Z_{\xi_k}^{( n)} ( \eta_k)$ has the important
property that, as a product of $n$-homogeneous polynomials, has only terms of
the form
\begin{equation*}
  \prod^i_{k = 1} \eta^{\alpha_k}_k = \prod_{k = 1}^i \left( \prod_{j_k=1}^d
  \eta^{\alpha_{k, j_k}}_{k, j_k} \right) \label{RieszproductFourier}
\end{equation*}
where $\eta_k \in \mathbb{S}^{d - 1}$ and $\alpha_k = ( \alpha_{k, j_k})$ are
multi-indices with $| \alpha_k | = \sum_{j_k} \alpha_{k, j_k} = n$ for all $k$. This
form is inherited by $C^{( N)}_i$ with varying $n$. It shows that $C^{(N)}_i$ is indeed a polynomial in the variables $\prod_{k=1}^{i}\eta_{k,j_k}$. 
When the dimensions $d_k$ are not equal, observe that by restricting back to the original  number of variables, we certainly lose harmonicity of the polynomials, 
but not $n$-homogeneity or the required form of our polynomials.
\end{proof}

 \begin{proof} (of Lemma \ref{lemma_lowerbd_Journe})
  By Lemma \ref{lemma_cone} we know that for each parameter $1 \leqslant s
  \leqslant l$ there exists a tensor product of cones $\vec{D}_s = \bigotimes_{k
  \in I_s} D ( \xi_k, r_k)$ with $r_s := \sum_{k \in I_s} r_k < \pi / 2$
  and $\xi_k \in \Upsilon_k$ \ and test functions $f_s$ supported as described
  in Lemma \ref{lemma_cone} part {\it (2)} so that
  \[ \| [ T_{1, \vec{D}_1}, \ldots, [ T_{l, \vec{D}_l}, b] \ldots] ( f) \|_2
     \gtrsim \| b \|_{\text{BMO}_{\mathcal{I}} ( \mathbb{R}^{\vec{d}})} \]
  where $f = \bigotimes^l_{s = 1} f_s$. 
  We make a remark about the apertures $r_s$. Let $d ( \cdot, \cdot)$ denote geodesic distance on $\mathbb{S}^{d - 1}$,
where antipodal points have distance $\pi$. Let $\vec{\xi}^{(s)}$ be the set of directions of $\vec{D}_s$. Remember that according to Lemma \ref{lemma_cone},
one component had a specific direction $\xi^{(s)}_v \in \Upsilon_v$ and possibly large aperture with 
$( 1 + \tau)r^{(s)}_v < \pi / 2$. Let us choose the other directions arbitrarily but with
apertures $r^{(s)}_{\hat{v}}$ small enough so that $( 1 + \tau)(r^{(s)}_v + ( i - 1) r^{(s)}_{\hat{v}} )< \pi
/ 2$. Now choose an aperture $r_s < \pi / 2$ so that $ ( 1 + \tau)( r^{(s)}_v + ( i
- 1) r^{(s)}_{\hat{v}}) < r_s < \pi / 2$.

  Writing $i_s = | I_s |$, we find
  Journ{\'e} type cone multipliers $\vec{J}_s = C_{i_s} ( \vec{\xi}^{(
  s)} ; \cdot)$ according to the construction above with center $\vec{\xi}^{(s)}$ and aperture $r_s$.   
  
  We are going to observe that $\vec{J}_s
  \equiv 1$ on ${\text{spt}} ( \vec{D}_s)$ and $\vec{J}_s \equiv - 1$ on
  the Fourier support of $f_s$. Let us drop the dependence on $s$ for the moment. 
  We  see in an inductive manner that $C_i ( \vec{\xi} ; \cdot)$ takes the
value 1 in a certain $\ell^1$ ball of radius $r < \pi / 2$ centered at
$\vec{\xi}$. We show that $$\sum_k d ( \xi_k, \eta_k) < r \Rightarrow C_i (
\vec{\xi}, \eta_1, \ldots, \eta_i) = 1.$$ When $i = 1$, the assertion is
obviously true: $d ( \xi_1, \eta_1) < r \Rightarrow C_1 ( \xi_1 ; \eta_1) = 1$
by the choice of $\varphi, r$ and definition of $C_1$. For $i > 1$, we proceed
by induction. Assume that $\sum^{i - 1}_{k = 1} d ( \xi_k, n_k) < r$ implies
$ C_{i - 1} ( \xi_1, \ldots, \xi_{i - 1} ; \eta_1, \ldots, \eta_{i -
1}) = 1$. 
Let us assume that $\sum^i_{k = 1} d ( \xi_k, \eta_k) < r$. Remembering the definition of $C_i(\vec{\xi};\cdot)$ we assume $d (
a_{i - 1}, \xi_{i - 1}) = d ( \eta_i, \xi_i)$. By the triangle inequality $\sum^{i
- 2}_{k = 1} d ( \xi_k, \eta_k) + d ( a_{i - 1}, \eta_{i - 1}) \leqslant
\sum^{i - 2}_{k = 1} d ( \xi_k, \eta_k) + d ( a_{i - 1}, \xi_{i - 1}) + d (
\xi_{i - 1}, \eta_{i - 1}) = \sum^i_{k = 1} d ( \xi_k, \eta_k) < r.$ So $$C_{i - 1} ( \xi_1, \xi_2,
\ldots, a_{i - 1} ; \eta_1, \ldots, \eta_{i - 1}) = 1$$ for all $a_{i - 1}$ relevant to the conditional expectation in the definition of $C_i(\vec{\xi};\cdot)$. The statement for $i$ follows. 

Since $C_i ( \vec{\xi} ; \cdot)$ does not depend
upon the order of the variables in its construction,
we are also able to see exactly as done above that when $\sigma_k = - 1$ for
exactly one choice of $k$, then $\sum_k d ( \sigma_k \xi_k, \eta_k) < r
\Rightarrow C_i ( \vec{\xi} ; \eta_1, \ldots, \eta_i) = - 1$.

   Consider associated multi-parameter
  Calder\'on-Zygmund operators $\vec{T}_{s, \vec{J}_s}$ and
  ${\vec{Id}}_s = \bigotimes_{k \in I_s} \text{Id}_k$ and
  $\text{Id}_k$ the identity on the $k^{\text{th}}$ variable. 
  Now
  \begin{eqnarray*}
    [ \vec{T}_{1, \vec{J}_1}, \ldots, [ \vec{T}_{l,
    \vec{J}_l}, b] \ldots] ( f) & = & [ \vec{T}_{1, \vec{J}_1}
    + {\vec{Id}}_1, \ldots, [ \vec{T}_{l, \vec{J}_l}
    + {\vec{Id}}_l, b] \ldots] ( f)\\
    & = & \bigotimes^l_{s = 1} ( \vec{T}_{s, \vec{J}_s} +
    {\vec{Id}}_s) ( b f)
  \end{eqnarray*}
  With $\| \bigotimes^l_{s = 1} ( \vec{T}_{s, \vec{J}_s} +
  {\vec{Id}}_s) ( b f) \|_2 \geqslant 
  \| \bigotimes^l_{s = 1} T_{s, \vec{D}_s} ( b f) \|_2$
  and  $\bigotimes^l_{s = 1} T_{s, \vec{D}_s} ( b f)= [ T_{1,
  \vec{D}_1}, \ldots, [ T_{l, \vec{D}_l}, b] \ldots] (  f)$
we get the desired lower bound on the Journ\'e commutator as claimed.
\end{proof}

Let us illustrate the base case $(\mathbb{S}^{1})^2$ by a picture. The picture is simplified in the sense that the odd function $\varphi$ above is replaced by an indicator function of an interval. 

\medskip

\begin{minipage}{0.35\textwidth}
\setlength{\unitlength}{1cm}
\begin{picture}(6,6)
\put(1,0){\line(1,1){4.0}}
\put(0,1){\line(1,1){4.0}}
\put(4,0){\line(1,1){1.0}}
\put(0,4){\line(1,1){1.0}}
\put(0,3){\line(1,-1){3.0}}
\put(0,5){\line(1,-1){5.0}}
\put(3,5){\line(1,-1){2.0}}
\put(2,2){\circle*{0.1}}
\put(1.3,1.8){\line(1,0){1.4}}
\put(3.8,1.8){\line(1,0){1.2}}
\put(0,1.8){\line(1,0){0.2}}
\put(1.3,2.2){\line(1,0){1.4}}
\put(3.8,2.2){\line(1,0){1.2}}
\put(0,2.2){\line(1,0){0.2}}
\put(1.3,1.8){\line(0,1){0.4}}
\put(3.8,1.8){\line(0,1){0.4}}
\put(2.7,1.8){\line(0,1){0.4}}
\put(0.2,1.8){\line(0,1){0.4}}
\put(0,0){\line(1,0){5}}
\put(0,5){\line(1,0){5}}
\put(0,0){\line(0,1){5}}
\put(5,0){\line(0,1){5}}
\put(5.2,2.5){$\mathbb{S}^1$}
\put(2.5,5.2){$\mathbb{S}^1$}
\put(2.1,1.9){\tiny{$\vec{\xi}$}}
\end{picture}

\end{minipage} \hfill
\begin{minipage}{0.6\textwidth}
Cone functions based on the oblique strips containing $\vec{\xi}$ are averaged. In the two-dimensional case, $\mathbb{S}^{1}$, expectation is over a one or two point set only. The rectangle around $\vec{\xi}$ with sides parallel to the axes representing $\mathbb{S}^1$ illustrates the support of the tensor product of cone operators with direction $\vec{\xi}$. The longer side is the aperture that arises from the Hankel part. The short sides can be chosen freely as they arise from the Toeplitz part and are chosen small so that the rectangle fits into the oblique square. The other small rectangle corresponds to the Fourier support of the test function $f$.
\end{minipage}

\medskip


\begin{proof}(of Theorem \ref{theorem_riesz})

In contrast to the Hilbert transform case, both lower bounds require
separate proofs. This is a notable difference that stems from the loss of orthogonal subspaces in conjunction with the special form of the Hilbert transform only seen in one variable. It does not seem possible to get a lower estimate {\it (3)}$\Rightarrow${\it (2)} directly.

{\it (1)}$\Leftrightarrow${\it (2)}. The upper bound {\it (1)}$\Rightarrow${\it (2)} is an easy consequence of the upper estimates of iterated commutators of single Riesz transforms. The lower 
bound {\it (2)}$\Rightarrow${\it (1)} follows from a standard fact on multipliers in combination with the main result
in \cite{LPPW}, the two-sided estimate for iterated commutators with
Riesz transforms, similar to the first arguments used in
\ref{lemma_cone}.

{\it (1)}$\Leftrightarrow${\it (3)}. The upper bound {\it (1)}$\Rightarrow${\it (3)} follows from the tensor product structure and use of the little product BMO norm (see also the remarks in section \ref{generalcase}). The lower bound {\it (3)}$\Rightarrow${\it (1)} uses the considerations
leading up to this proof: Suppressing again the dependence on $s$, 
we see that the multiplier $C_i$ is an odd, smooth, bounded function in
each $\eta_k$ when the other variables are fixed. 
Furthermore, since $\varphi$, written as a function of $t =
\langle \xi, \eta \rangle$ is odd with respect to $t = 0$, the above
series has $\varphi_n \neq 0$ at most when $n$ is odd and thus $Z_{\xi}^{(
n)}$ is odd. So $C^{(N)}_i$ is as a sum of odd functions odd.

We are now also ready to
see that $\vec{T}_{\vec{J}}$, the Journ\'e operator associated to the cone $\vec{J}=C_i(\vec{\xi};\cdot)$ as well as the operator associated to $C^{(N)}_i(\vec{\xi};\cdot)$ are
paraproduct free. In fact, applied to a test function $f = \bigotimes_k f_k$ with $f_k$ acting on the $k^{\text{th}}$ variable and where $f_l \equiv 1$ for some $l$ gives $\vec{T}_{\vec{J}} ( f) = 0$. To see
this, apply the multiplier $C^{(N)}_i ( \vec{\xi} ; \cdot)$ in the $l$ variable
(acting on 1) first, leaving the other Fourier variables fixed. The multiplier
function
\[ \eta_l \mapsto C^{(N)}_i ( \vec{\xi} ; \eta_1, \ldots, \eta_i) = \sum_{n
   \leqslant N} \varphi_n Z^{( n)}_{\xi_l} ( \eta_l) \prod_{k \neq l, k = 1}^i
   Z_{\xi_k}^{( n)} ( \eta_k) \]
is, as a sum of odd functions, odd on $\mathbb{S}^{d_l - 1}$, bounded by 1 and
uniformly smooth for all choices of $\eta_k$ with $k \neq l$. As such it gives
rise to a paraproduct free convolution type Calder\'on-Zygmund operator in the
$l$th variable whose values are multi-parameter multiplier operators.

Due to the convergence properties proved above, the difference
\[ C_i ( \vec{\xi} ; \cdot) - C^{( N)}_i ( \vec{\xi} ; \cdot) \]
gives rise to a paraproduct free Journ{\'e} operator with 
Calder\'on-Zygmund norm depending on $N$. This is seen by an application of an appropriate
version of the Marcinkievicz multiplier theorem.

By our stability result on Journ\'e commutators in section \ref{section_upper}, Corollary \ref{lemma_perturbation}, there exist for all $1\le s\le l$ integers $N_s$ so that $C^{( N_s)}_s (
\vec{\xi}_s ; \cdot)$ with $\xi_k \in \Upsilon_k$ is a characterizing set of
operators via commutators for $\text{BMO}_{\mathcal{I}} (
\mathbb{R}^{\vec{d}})$. This is a finite set of possibilities because of the universal choice of the $r_s$ and finiteness of the set $\vec{\Upsilon}$. Using the multi-parameter analog of the 
observation $[ A B, b] = A [ B, b] + [ A, b] B$ and the special form of the
$C^{( N_s)}_s ( \vec{\xi} ; \cdot)$, leaves us with the desired lower bound: Observe that when $[AB,b]$ has large $L^2$ norm then either $[A,b]$ or $[B,b]$ has a fair share of the norm. We use this argument finitely many times in a row for operators that are polynomials in tensor products of Riesz transforms $\bigotimes_{k\in I_s}R_{k,j_k}$. This finishes {\it (3)}$\Rightarrow${\it (1)}.
\end{proof}

We remark that there are two cases of dimension greater than 1, where the proof simplifies. In the case  of arbitrarily many copies of $\mathbb{R}^2$, one can work with the multiplicative structure of complex numbers and avoid the symmetrizing procedure to obtain cone functions with the appropriate polynomial approximations. 
 If the dimensions are arbitrary, but only tensor products of two Riesz transforms arise, one can avoid part of the construction above by using the addition formula for zonal harmonics. 

\section{Real variables: upper bounds}\label{section_upper}

In this section, we are interested in upper bounds for commutator norms by means of little product BMO norms of the symbol. In the case of the Hilbert transform, we have seen that these estimates, even in the iterated case, are straightforward. Other streamlined proofs exist for Hilbert or Riesz transforms when considering dyadic shifts of complexity one, see \cite{P}, \cite{PTV} and \cite{LPPW2}. When considering more general Calder\'on-Zygmund operators, the arguments required are more difficult, in each case. The first classical upper bound goes back to \cite{CRW}, where an estimate for one-parameter commutators with convolution type Calder\'on-Zygmund operators is given. Next, the text \cite{LPPW} includes a technical estimate for the multi-parameter case for such Calder\'on-Zygmund operators with a high enough degree of smoothness. This smoothness assumption was removed in \cite{DO} thanks to an approach using the representation formula for Calder\'on-Zygmund operators by means of infinite complexity dyadic shifts \cite{H}. This last proof also gives a control on the norm of the commutators which depends on the Calder\'on-Zygmund norm of the operators themselves, a fact we will employ later. Below, we give an estimate by little product BMO when the Calder\'on-Zygmund operators are of Journ\'e type and cannot be written as a tensor product. While this estimate is interesting in its own right, remember that it is also essential for our characterization result, the lower estimate, in section \ref{section_riesz}. 
The first generation of multi-parameter singular integrals that are not of tensor product type goes back to Fefferman \cite{F2} and was generalized by Journ\'e in \cite{J2} to the non-convolution type in the framework of his $T(1)$ theorem in this setting. Much later, Journ\'e's $T(1)$ theorem was revisited, for example in \cite{M}, \cite{O1}, \cite{O2}. See also \cite{M2} for some difficulties related to this subject. The references \cite{M} in the bi-parameter case and \cite{O2} in the general multi-parameter case include a representation formula by means of adapted, infinite complexity dyadic shifts. While these representation formulae look complicated, they have a feature very useful to us. `Locally', in a dyadic sense, they look as if they were of tensor product type, a feature we will exploit in the argument below. We start with the simplest bi-parameter case with no iterations and make comments about the generalization at the end of this section.

The class of bi-parameter singular integral operators treated in this section is that of any paraproduct free Journ\'e type operator (not necessarily a tensor product and not necessarily of convolution type) satisfying a certain weak boundedness property, which we define as follows:

\begin{definition}
A continuous linear mapping $T: C_0^\infty(\mathbb{R}^{n})\otimes C_0^\infty(\mathbb{R}^{m})\rightarrow [C_0^\infty(\mathbb{R}^{n})\otimes C_0^\infty(\mathbb{R}^{m})]'$ is called a \emph{paraproduct free bi-parameter Calder\'on-Zygmund operator} if the following conditions are satisfied:

1. $T$ is a Journ\'e type bi-parameter $\delta$-singular integral operator, i.e. there exists a pair $(K_1, K_2)$ of $\delta CZ$-$\delta$-standard kernels so that, for all $f_1,g_1\in C_0^\infty(\mathbb{R}^{n})$ and $f_2,g_2\in C_0^\infty(\mathbb{R}^{m})$,
\[
\pair{T(f_1\otimes f_2)}{g_1\otimes g_2}=\int f_1(y_1)\pair{K_1(x_1,y_1)f_2}{g_2}g_1(x_1)\,dx_1dy_1
\]
when $\text{spt}f_1\cap\text{spt}g_1=\emptyset$;
\[
\pair{T(f_1\otimes f_2)}{g_1\otimes g_2}=\int f_2(y_2)\pair{K_2(x_2,y_2)f_1}{g_1}g_2(x_2)\,dx_2dy_2
\]
when $\text{spt}f_2\cap\text{spt}g_2=\emptyset$.

2. $T$ satisfies the weak boundedness property $|\pair{T(\chi_I\otimes\chi_J)}{\chi_I\otimes\chi_J}|\lesssim |I||J|$, for any cubes $I\subset \mathbb{R}^n, J\in\mathbb{R}^m$.

3. $T$ is paraproduct free in the sense that $T(1\otimes \cdot)=T(\cdot\otimes 1)=T^*(1\otimes \cdot)=T^*(\cdot\otimes 1)=0$.

\end{definition}

Recall that a $\delta CZ$-$\delta$-standard kernel is a vector valued standard kernel taking values in the Banach space consisting of all Calder\'on-Zygmund operators. It is easy to see that an operator defined as above satisfies all the characterizing conditions in Martikainen \cite{M}, hence is $L^2$ bounded and can be represented as an average of bi-parameter dyadic shift operators together with dyadic paraproducts. Moreover, since $T$ is paraproduct free, one can conclude from observing the proof of Martikainen's theorem, that all the dyadic shifts in the representation are cancellative. 

The base case from which we pass to the general case below, is the following:

\begin{theorem}
Let $T$ be a paraproduct free bi-parameter Calder\'on-Zygmund operator, and $b$ be a little bmo function, there holds
\[
\|[b,T]\|_{L^2(\mathbb{R}^{n}\times\mathbb{R}^m)\righttoleftarrow }\lesssim \|b\|_{\text{bmo}(\mathbb{R}^n\times\mathbb{R}^m)},
\]
where the underlying constant depends only on the characterizing constants of $T$.
\end{theorem}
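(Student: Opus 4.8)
The plan is to use the dyadic representation theorem for $T$ and reduce the commutator estimate to a uniform (in complexity) estimate for commutators of $b$ with \emph{cancellative} bi-parameter dyadic shifts. Since $T$ is a paraproduct free bi-parameter Calder\'on-Zygmund operator, it satisfies the hypotheses of Martikainen's representation theorem \cite{M}, so $T$ is an average over random dyadic grids $\mathcal{D}^1,\mathcal{D}^2$ of bi-parameter dyadic shifts $S^{i_1 j_1; i_2 j_2}_{\mathcal{D}^1 \mathcal{D}^2}$, with coefficients decaying like $2^{-\delta \max(i_1,j_1)} 2^{-\delta \max(i_2,j_2)}$; because $T$ is paraproduct free, \emph{all} shifts occurring are cancellative (no paraproduct-type shifts appear). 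By linearity $[b,T]$ is the same average of $[b, S^{i_1 j_1; i_2 j_2}]$, so if I can show
\[
\|[b,S^{i_1 j_1; i_2 j_2}]\|_{L^2 \to L^2} \lesssim \big(1 + \max(i_1,j_1)\big)\big(1 + \max(i_2,j_2)\big)\,\|b\|_{\text{bmo}},
\]
i.e. polynomial growth in the complexities, then summing against the exponentially decaying coefficients gives the theorem with constant depending only on $\delta$ and the weak boundedness / kernel constants of $T$.

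So the core of the argument is the uniform estimate for a single cancellative bi-parameter shift. Here I would exploit the ``locally tensor product'' structure emphasized in the text: a cancellative bi-parameter dyadic shift of complexity $(i_1,j_1;i_2,j_2)$ can be written as a sum over pairs of dyadic rectangles $(R, R')$ with $R = I_1 \times I_2$, $R' = I_1' \times I_2'$, $I_1' \subseteq I_1^{(i_1)}$ (the $i_1$-th dyadic ancestor), etc., of rank-one pieces $\langle f, h_{R}\rangle\, a_{R R'}\, h_{R'}$ where the $h$'s are (tensor products of) Haar functions or their $L^1$-normalized cousins, and $|a_{RR'}| \lesssim |R|^{-1/2}|R'|^{-1/2}|R \cap R'|$ with appropriate normalization. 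To handle $[b,S]$ I would insert the martingale decomposition of $b$ adapted to the two dyadic grids and split into the usual finite number of paraproduct-type pieces: one where $b$ is ``far'' (averaged) in both parameters, mixed pieces where $b$ is averaged in one parameter and differenced in the other, and the piece where $b$ is differenced in both. The diagonal/differenced pieces are bi-parameter dyadic paraproducts with symbol $b$, which are $L^2$-bounded precisely because $b \in \text{bmo}$ controls, in particular, the relevant one-parameter $\text{BMO}$ norms in each variable uniformly (recall $\|b\|_{\text{bmo}} = \max_v \sup_{\vec{x}_{\hat v}} \|b(\vec{x}_{\hat v})\|_{\text{BMO}}$); the averaged pieces reduce to the shift itself acting after a bounded (by $\|b\|_\infty$-type, or rather by the relevant one-parameter paraproduct) modification. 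The complexity loss enters only through the number of scales separating $R$ from $R'$, giving the stated polynomial factors $(1+\max(i_1,j_1))(1+\max(i_2,j_2))$, which is harmless.

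More concretely, for the single-shift estimate I would follow the one-parameter Coifman--Rochberg--Weiss / Dalenc--Ou scheme one parameter at a time. Freezing the second variable, $S$ looks like a one-parameter complexity-$(i_1,j_1)$ cancellative shift with operator-valued coefficients; the commutator identity $[b, S] = \sum (\text{expansions of } b)$ produces terms each of which is either a shift composed with multiplication by a martingale block of $b$, or a paraproduct in $b$ composed with a shift. Using that martingale blocks of a $\text{bmo}$ function satisfy a Carleson condition uniformly in the frozen variables, each term is bounded on $L^2$ with norm $\lesssim (1+\max(i_1,j_1))\|b\|_{\text{bmo}}$; iterating in the second parameter multiplies by $(1+\max(i_2,j_2))$. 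Assembling and summing the random dyadic representation finishes the proof.

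The main obstacle I expect is \emph{bookkeeping of the bi-parameter paraproduct pieces of $b$ against the two independent complexities}: one must verify that every ``mixed'' term (averaged in one parameter, differenced in the other, against a shift that itself has separate complexities in each parameter) is genuinely $L^2$-bounded by $\|b\|_{\text{bmo}}$ with at most polynomial complexity growth, and in particular that no genuine product-$\text{BMO}$ (Chang--Fefferman) condition on $b$ is secretly needed --- it is not, precisely because $T$, hence each shift, is \emph{paraproduct free}, which kills exactly the term that would have required the stronger hypothesis. Making this cancellation explicit at the level of the dyadic shifts (equivalently, checking that the would-be product-BMO paraproduct never appears) is the delicate point; everything else is the now-standard machinery of dyadic shifts plus the elementary structure of $\text{bmo}$.
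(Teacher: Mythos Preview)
Your overall strategy coincides with the paper's: invoke Martikainen's representation to write $T$ as an average of cancellative bi-parameter shifts (paraproduct-freeness makes all shifts cancellative), prove the single-shift bound
\[
\|[b,S^{i_1j_1i_2j_2}]\|_{L^2\to L^2}\lesssim (1+\max(i_1,j_1))(1+\max(i_2,j_2))\|b\|_{\text{bmo}},
\]
and sum against the exponential decay. So the architecture is right.

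Two points deserve correction, though. First, you have the inclusion between little bmo and product BMO reversed: little bmo is the \emph{stronger} hypothesis, $\text{bmo}\subset\text{BMO}_{\text{prod}}$. So your concern that ``no genuine product-BMO condition is secretly needed'' is not a real obstacle---any product BMO bound comes for free. In the paper's proof, the \emph{regular} case ($I_1\subset J_1^{(i_1)}$ and $I_2\subset J_2^{(i_2)}$) is handled exactly by the bi-parameter paraproducts $B_{k,l}$ of \cite{DO}, which need only $b\in\text{BMO}_{\text{prod}}$; this is harmless by the inclusion. Paraproduct-freeness is not what avoids a product-BMO requirement; it is what guarantees every Haar function in every shift is cancellative, which is used throughout the case analysis.

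Second, the place where the \emph{specific} little bmo structure is used---and which your sketch does not make explicit---is the \emph{mixed} case, e.g.\ $I_1\subset J_1^{(i_1)}$ but $I_2\supsetneq J_2^{(i_2)}$. There the paper collapses the second variable by summing $\sum_{I_2\supsetneq J_2^{(i_2)}}\langle b,h_{I_1}\otimes h_{I_2}\rangle h_{I_2}$ to produce the average $\langle b\rangle_{J_2^{(i_2)}}$, a function of the first variable only, and then uses the key pointwise bound
\[
\|\langle b\rangle_{J_2^{(i_2)}}\|_{\text{BMO}(\mathbb{R}^n)}\le \big\langle \|b(\cdot,x_2)\|_{\text{BMO}(\mathbb{R}^n)}\big\rangle_{J_2^{(i_2)}}\le \|b\|_{\text{bmo}}
\]
to reduce to \emph{one-parameter} paraproducts $B_k$ in the first variable, summed orthogonally in the second. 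This is the genuine work and is not captured by ``freeze one variable and iterate'': the shift is not a tensor product, so freezing does not directly give a one-parameter shift; rather, one must organize the Haar expansion so that the second-variable sum collapses to an average before the one-parameter paraproduct machinery applies. Your outline is compatible with this, but the mechanism by which little bmo enters (via averages in one variable having uniformly bounded BMO norm in the other) should be the centerpiece of the mixed-case argument.
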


\begin{proof}

According to the discussion above, for any sufficiently nice functions $f,g$, one has the following representation:
\begin{equation}\label{repre}
\pair{Tf}{g}=C\mathbb{E}_{\omega_1}\mathbb{E}_{\omega_2}\sum_{i_1,j_1=0}^\infty\sum_{i_2,j_2=0}^\infty 2^{-\max(i_1,j_1)}2^{-\max(i_2,j_2)}\pair{S^{i_1j_1i_2j_2}f}{g},
\end{equation}
where expectation is with respect to a certain parameter of the dyadic grids. The dyadic shifts $S^{i_1j_1i_2j_2}$ are defined as
\[
\begin{split}
&S^{i_1j_1i_2j_2}f\\
&:=\sum_{K_1\in\mathcal{D}_1}\sum_{\substack{I_1,J_1\subset K_1, I_1,J_1\in\mathcal{D}_1\\\ell(I_1)=2^{-i_1}\ell(K_1)\\\ell(J_1)=2^{-j_1}\ell(K_1)}}\sum_{K_2\in\mathcal{D}_2}\sum_{\substack{I_2,J_2\subset K_2, I_2,J_2\in\mathcal{D}_2\\\ell(I_2)=2^{-i_2}\ell(K_2)\\\ell(J_2)=2^{-j_2}\ell(K_2)}}a_{I_1J_1K_1I_2J_2K_2}\pair{f}{h_{I_1}\otimes h_{I_2}}h_{J_1}\otimes h_{J_2}\\
&=\sum_{K_1}\sum_{I_1,J_1\subset K_1}^{(i_1,j_1)}\sum_{K_2}\sum_{I_2,J_2\subset K_2}^{(i_2,j_2)}a_{I_1J_1K_1I_2J_2K_2}\pair{f}{h_{I_1}\otimes h_{I_2}}h_{J_1}\otimes h_{J_2}.
\end{split}
\]

The coefficients above satisfy $a_{I_1J_1K_1I_2J_2K_2}\leq \frac{\sqrt{|I_1||J_1||I_2||J_2|}}{|K_1||K_2|}$, which also guarantees the normalization $\|S^{i_1j_1i_2j_2}\|_{L^2\rightarrow L^2}\leq 1$. Moreover, since $T$ is paraproduct free, all the Haar functions appearing above are cancellative.

It thus suffices to show that for any dyadic grids $\mathcal{D}_1,\mathcal{D}_2$ and fixed $i_1,j_1,i_2,j_2\in\mathbb{N}$, one has
\begin{equation}\label{shift}
\|[b,S^{i_1j_1i_2j_2}]f\|_{L^2}\lesssim (1+\max(i_1,j_1))(1+\max(i_2,j_2))\|b\|_{\text{bmo}}\|f\|_{L^2},
\end{equation}
as the decay factor $2^{-\max(i_1,j_1)}, 2^{-\max(i_2,j_2)}$ in (\ref{repre}) will guarantee the convergence of the series.

To see (\ref{shift}), one decomposes $b$ and a $L^2$ test function $f$ using Haar bases:
\[
[b,S^{i_1j_1i_2j_2}]f=\sum_{I_1,I_2}\sum_{J_1,J_2}\pair{b}{h_{I_1}\otimes h_{I_2}}\pair{f}{h_{J_1}\otimes h_{J_2}}[h_{I_1}\otimes h_{I_2},S^{i_1j_1i_2j_2}]h_{J_1}\otimes h_{J_2}.
\]

A similar argument to that in \cite{DO} implies that $[h_{I_1}\otimes h_{I_2},S^{i_1j_1i_2j_2}]h_{J_1}\otimes h_{J_2}$ is nonzero only if $I_1\subset J_1^{(i_1)}$ or $I_2\subset J_2^{(i_2)}$, where $J_1^{(i_1)}$ denotes the $i_1$-th dyadic ancestor of $J_1$, similarly for $J_2^{(i_2)}$. Hence, the sum can be decomposed into three parts: $I_1\subset J_1^{(i_1)}$ and $I_2\subset J_2^{(i_2)}$ (regular), $I_1\subset J_1^{(i_1)}$ and $I_2\supsetneq J_2^{(i_2)}$, $I_1\supsetneq J_1^{(i_1)}$ and $I_2\subset J_2^{(i_2)}$ (mixed). \\

{\it 1) Regular case:}

Following \cite{DO} one can decompose the arising sum into sums of classical bi-parameter dyadic paraproducts $B_0(b,f)$ and its slightly revised version $B_{k,l}(b,f)$: for any integers $k,l\geq 0$, $B_{k,l}$ is the bi-parameter dyadic paraproduct  defined as
\[
B_{k,l}(b,f)=\sum_{I,J}\beta_{IJ}\pair{b}{h_{I^{(k)}}\otimes u_{J^{(l)}}}\pair{f}{h_I^{\varepsilon_1}\otimes u_J^{\varepsilon_2}}h_I^{\varepsilon_1'}\otimes u_J^{\varepsilon_2'}|I^{(k)}|^{-1/2}|J^{(l)}|^{-1/2},
\]
where $\beta_{IJ}$ is a sequence satisfying $|\beta_{IJ}|\leq 1$. When $k>0$, all Haar functions in the first variable are cancellative, while when $k=0$, there is at most one of $h_I^{\varepsilon_1}, h_I^{\varepsilon_1'}$ being noncancellative. The same assumption goes for the second variable. Observe that when $k=l=0$, $B_{k,l}$ becomes the classical paraproduct $B_0$. It is proved in \cite{DO} that $$\|B_{k,l}(b,f)\|_{L^2}\lesssim\|b\|_{\text{BMO}}\|f\|_{L^2}$$ with a constant independent of $k,l$ and the product BMO norm on the right hand side.

Then since little bmo functions are contained in product BMO, this part can be controlled. More specifically, write
\[
\begin{split}
[b,S^{i_1j_1i_2j_2}]f&=\sum_{I_1,I_2}\sum_{J_1,J_2}\pair{b}{h_{I_1}\otimes h_{I_2}}\pair{f}{h_{J_1}\otimes h_{J_2}}h_{I_1}\otimes h_{I_2}S^{i_1j_1i_2j_2}(h_{J_1}\otimes h_{J_2})\\
&\quad-\sum_{I_1,I_2}\sum_{J_1,J_2}\pair{b}{h_{I_1}\otimes h_{I_2}}\pair{f}{h_{J_1}\otimes h_{J_2}}S^{i_1j_1i_2j_2}(h_{I_1}h_{J_1}\otimes h_{I_2}h_{J_2})\\
&=:I+II,
\end{split}
\]
then one can estimate term I and II separately. According to the definition of dyadic shifts, term I is equal to
\[
\begin{split}
&\sum_{J_1,J_2}\sum_{I_1: I_1\subset J_1^{(i_1)}}\sum_{I_2: I_2\subset J_2^{(i_2)}}\pair{b}{h_{I_1}\otimes h_{I_2}}\pair{f}{h_{J_1}\otimes h_{J_2}}h_{I_1}\otimes h_{I_2}\cdot\\
&\qquad  \left(\vphantom{\sum}\right.\sum_{\substack{J_1':J_1'\subset J_1^{(i_1)}\\\ell(J_1')=2^{i_1-j_1}\ell(J_1)}}\sum_{\substack{J_2':J_2'\subset J_2^{(i_2)}\\\ell(J_2')=2^{i_2-j_2}\ell(J_2)}}a_{J_1J_1'J_1^{(i_1)}J_2J_2'J_2^{(i_2)}}h_{J_1'}\otimes h_{J_2'}\left.\vphantom{\sum}\right)\\
&=\sum_{K_1,K_2}\sum_{J_1: J_1\subset K_1}^{(i_1)}\sum_{J_2:J_2\subset K_2}^{(i_2)}\sum_{I_1:I_1\subset K_1}\sum_{I_2:I_2\subset K_2}\pair{b}{h_{I_1}\otimes h_{I_2}}\pair{f}{h_{J_1}\otimes h_{J_2}}h_{I_1}\otimes h_{I_2}\cdot\\
&\qquad \left(\vphantom{\sum}\right.\sum_{J_1':J_1'\subset K_1}^{(j_1)}\sum_{J_2':J_2'\subset K_2}^{(j_2)}a_{J_1J_1'K_1J_2J_2'K_2}h_{J_1'}\otimes h_{J_2'}\left.\vphantom{\sum}\right)\\
&=\sum_{I_1,I_2}\pair{b}{h_{I_1}\otimes h_{I_2}}h_{I_1}\otimes h_{I_2}\sum_{\substack{K_1\supset I_1\\K_2\supset I_2}}
\sum_{J_1,J_1'\subset K_1}^{(i_1,j_1)}\sum_{J_2,J_2'\subset K_2}^{(i_2,j_2)}a_{J_1J_1'K_1J_2J_2'K_2}\pair{f}{h_{J_1}\otimes h_{J_2}}h_{J_1'}\otimes h_{J_2'}\\
&=\sum_{I_1,I_2}\pair{b}{h_{I_1}\otimes h_{I_2}}h_{I_1}\otimes h_{I_2}\sum_{J_1':J_1'^{(j_1)}\supset I_1}\sum_{J_2':J_2'^{(j_2)}\supset I_2}\pair{S^{i_1j_1i_2j_2}f}{h_{J_1'}\otimes h_{J_2'}}h_{J_1'}\otimes h_{J_2'}.
\end{split}
\]
Because of the supports of Haar functions, the inner sum above can be further decomposed into four parts, where
\[
\begin{split}
&I=\sum_{I_1,I_2}\sum_{J_1'\supsetneq I_1}\sum_{J_2'\supsetneq I_2},\quad
II=\sum_{I_1,I_2}\sum_{J_1'\supsetneq I_1}\sum_{J_2': J_2'\subset I_2\subset J_2'^{(j_2)}}\\
&III=\sum_{I_1,I_2}\sum_{J_1':J_1'\subset I_1\subset J_1'^{(j_1)}}\sum_{J_2'\supsetneq I_2},\quad
IV=\sum_{I_1,I_2}\sum_{J_1':J_1'\subset I_1\subset J_1'^{(j_1)}}\sum_{J_2': J_2'\subset I_2\subset J_2'^{(j_2)}}.
\end{split}
\]
Hence, using the same technique as in \cite{DO}, one has
\[
I=\sum_{I_1,I_2}\pair{b}{h_{I_1}\otimes h_{I_2}}\pair{S^{i_1j_1i_2j_2}f}{h_{J_1'}^1\otimes h_{J_2'}^1}h_{I_1}\otimes h_{I_2}|I_1|^{-1/2}|I_2|^{-1/2},
\]
which is a bi-parameter paraproduct $B_0(b,f)$. Moreover, one has
\[
\begin{split}
II&=\sum_{I_1,I_2}\pair{b}{h_{I_1}\otimes h_{I_2}}h_{I_1}\otimes h_{I_2}\sum_{J_2':J_2'\subset I_2\subset J_2'^{(j_2)}}\pair{S^{i_1j_1i_2j_2}f}{h_{I_1}^1\otimes h_{J_2'}}|I_1|^{-1/2}h_{J_2'}\\
&=\sum_{l=0}^{j_2}\sum_{I_1,J_2'}\beta_{J_2'}\pair{b}{h_{I_1}\otimes h_{J_2'^{(l)}}}\pair{S^{i_1j_1i_2j_2}f}{h_{I_1}^1\otimes h_{J_2'}}h_{I_1}\otimes h_{J_2'}|I_1|^{-1/2}|J_2'^{(l)}|^{-1/2}\\
&=\sum_{l=0}^{j_2}B_{0l}(b,S^{i_1j_1i_2j_2}f),
\end{split}
\]
where constants $\beta_{J_2'}\in\{1,-1\}$, and $B_{0l}$ are the generalized bi-parameter paraproducts of type $(0,l)$ defined in \cite{DO} whose $L^2\rightarrow L^2$ operator norm is uniformly bounded by $\|b\|_{\text{BMO}}$ product BMO.
Similarly, one can show that
\[
III=\sum_{k=0}^{j_1}B_{k0}(b,S^{i_1j_1i_2j_2}f),\quad IV=\sum_{k=0}^{j_1}\sum_{l=0}^{j_2}B_{kl}(b,S^{i_1j_1i_2j_2}f).
\]

Since $\|b\|_{\text{BMO}}\lesssim \|b\|_{\text{bmo}}$, all the forms above are $L^2$ bounded. This completes the discussion of term I.

To get an estimate of term II, we need to decompose it into finite linear combinations of $S^{i_1j_1i_2j_2}(B_{kl}(b,f))$. By linearity, one can write $S^{i_1j_1i_2j_2}$ on the outside from the beginning, and we will only look at the inside sum. One splits for example the sum regarding the first variable into three parts: $I_1\subsetneq J_1$, $I_1=J_1$, $J_1\subsetneq I_1\subset J_1^{(i_1)}$. If we split the second variable as well, there are nine mixed parts, and it's not hard to show that each of them can be represented as a finite sum of $B_{kl}(b,f)$. We omit the details. \\

{\it 2) Mixed case.}
Let's call the second and the third `mixed' parts, and as the two are symmetric, it suffices to look at the second one, i.e. $I_1\subset J_1^{(i_1)},I_2\supsetneq J_2^{(i_2)}$. In the first variable, we still have the old case $I_1\subset J_1^{(i_1)}$ that appeared in \cite{DO}, so morally speaking, we only need to nicely play around with the stronger little bmo norm to handle the second variable. For any fixed $I_1,J_1,I_2,J_2$, since $I_2\supsetneq J_2^{(i_2)}$, the definition of dyadic shifts implies that 
\[
h_{I_1}\otimes h_{I_2}S^{i_1j_1i_2j_2}(h_{J_1}\otimes h_{J_2})=h_{I_1}S^{i_1j_1i_2j_2}(h_{J_1}\otimes h_{I_2}h_{J_2})
\]
and
\[
S^{i_1j_1i_2j_2}(h_{i_1}h_{J_1}\otimes h_{I_2}h_{J_2})=h_{I_2}S^{i_1j_1i_2j_2}(h_{I_1}h_{J_1}\otimes h_{J_2}).
\]

Hence, we still have cancellation in the second variable, which converts the mixed case to
\[
\begin{split}
&\sum_{I_1\subset J_1^{(i_1)}}\sum_{I_2\supsetneq J_2^{(i_2)}}\pair{b}{h_{I_1}\otimes h_{I_2}}\pair{f}{h_{J_1}\otimes h_{J_2}}[h_{I_1},S^{i_1j_1i_2j_2}](h_{J_1}\otimes h_{I_2}h_{J_2})\\
&=\sum_{I_1\subset J_1^{(i_1)}}\sum_{J_2}\pair{f}{h_{J_1}\otimes h_{J_2}}[h_{I_1},S^{i_1j_1i_2j_2}](h_{J_1}\otimes\sum_{I_2\supsetneq J_2^{(i_2)}}\pair{b}{h_{I_1}\otimes h_{I_2}}h_{I_2}h_{J_2})\\
&=\sum_{I_1\subset J_1^{(i_1)}}\sum_{J_2}\pair{f}{h_{J_1}\otimes h_{J_2}}[h_{I_1},S^{i_1j_1i_2j_2}](h_{J_1}\otimes \pair{b}{h_{I_1}\otimes h_{J_2^{(i_2)}}^1}h_{J_2^{(i_2)}}^1h_{J_2})\\
&=\sum_{I_1\subset J_1^{(i_1)}}\sum_{J_2}\pair{b}{h_{I_1}\otimes h_{J_2^{(i_2)}}^1}|J_2^{(i_2)}|^{-1/2}\pair{f}{h_{J_1}\otimes h_{J_2}}[h_{I_1},S^{i_1j_1i_2j_2}](h_{J_1}\otimes h_{J_2})\\
&=\sum_{I_1\subset J_1^{(i_1)}}\sum_{J_2}\pair{\ave{b}_{J_2^{(i_2)}}}{h_{I_1}}_1\pair{f}{h_{J_1}\otimes h_{J_2}}[h_{I_1},S^{i_1j_1i_2j_2}](h_{J_1}\otimes h_{J_2}),
\end{split}
\]
where $\ave{b}_{J_2^{(i_2)}}$ denotes the average value of $b$ on $J_2^{(i_2)}$, which is a function of only the first variable. 

In the following, we will once again estimate the first term and second term of the commutator separately, and the $L^2$ norm of each of them will be proved to be bounded by $\|b\|_{\text{bmo}}\|f\|_{L^2}$.

a) First term.

By definition of the dyadic shift, the first term is equal to
\[
\begin{split}
&\sum_{I_1\subset J_1^{(i_1)}}\sum_{J_2}\pair{\ave{b}_{J_2^{(i_2)}}}{h_{I_1}}_1h_{I_1}\pair{f}{h_{J_1}\otimes h_{J_2}}\cdot\\
&\qquad\left(\vphantom{\sum}\right.\sum_{\substack{J_1'\subset J_1^{(i_1)}\\\ell(J_1')=2^{i_1-j_1}\ell(J_1)}}\sum_{\substack{J_2'\subset J_2^{(i_2)}\\\ell(J_2')=2^{i_2-j_2}\ell(J_2)}}a_{J_1J_1'J_1^{(i_1)}J_2J_2'J_2^{(i_2)}}h_{J_1'}\otimes h_{J_2'}\left.\vphantom{\sum}\right),
\end{split}
\]
which by reindexing $K_1:=J_1^{(i_1)}$ is the same as
\[
\begin{split}
&\sum_{I_1,J_2}\pair{\ave{b}_{J_2^{(i_2)}}}{h_{I_1}}_1h_{I_1}\cdot \\
&\;\; \cdot \sum_{K_1:K_1\supset I_1}\sum_{J_1\subset K_1}^{(i_1)}\sum_{J_1'\subset K_1}^{(j_1)}\sum_{J_2'\subset J_2^{(i_2)}}^{(j_2)}a_{J_1J_1'K_1J_2J_2'J_2^{(i_2)}}\pair{f}{h_{J_1}\otimes h_{J_2}}h_{J_1'}\otimes h_{J_2'}\\
&=\sum_{I_1,J_2}\pair{\ave{b}_{J_2^{(i_2)}}}{h_{I_1}}_1h_{I_1}\sum_{J_1':J_1'^{(j_1)}\supset I_1}h_{J_1'}\otimes\pair{S^{i_1j_1i_2j_2}(\pair{f}{h_{J_2}}_2\otimes h_{J_2})}{h_{J_1'}}_1,
\end{split}
\]
where the inner sum is the orthogonal projection of the image of $\pair{f}{h_{J_2}}_2\otimes h_{J_2}$ under $S^{i_1j_1i_2j_2}$ onto the span of $\{h_{J_1'}\}$ such that $J_1'^{(j_1)}\supset I_1$. Taking into account the supports of the Haar functions in the first variable, one can further split the sum into two parts where
\[
I:=\sum_{J_2}\sum_{I_1\subsetneq J_1'},\quad II:=\sum_{J_2}\sum_{J_1'\subset I_1\subset J_1'^{(j_1)}}.
\]
Summing over $J_1'$ first implies that
\[
\begin{split}
I&=\sum_{J_2}\sum_{I_1}\pair{\ave{b}_{J_2^{(i_2)}}}{h_{I_1}}_1h_{I_1}\left(h_{I_1}^1\otimes \pair{S^{i_1j_1i_2j_2}(\pair{f}{h_{J_2}}_2\otimes h_{J_2})}{h_{I_1}^1}_1\right)\\
&=:\sum_{J_2}B_0(\ave{b}_{J_2^{(i_2)}},S^{i_1j_1i_2j_2}(\pair{f}{h_{J_2}}_2\otimes h_{J_2}))
\end{split}
\]
where $B_0(b,f):=\sum_{I}\pair{b}{h_I}\pair{f}{h_I^1}h_I|I|^{-1/2}$ is a classical one-parameter paraproduct in the first variable. Note that its $L^2$ norm is bounded by $\|b\|_{\text{BMO}}\|f\|_{L^2}$.
Moreover, according to the definition of $S^{i_1j_1i_2j_2}$, for any fixed $J_2$
\[
S^{i_1j_1i_2j_2}(\pair{f}{h_{J_2}}_2\otimes h_{J_2})=\sum_{J_2':J_2'^{(j_2)}=J_2^{(i_2)}}\pair{S^{i_1j_1i_2j_2}(\pair{f}{h_{J_2}}_2\otimes h_{J_2})}{h_{J_2'}}_2\otimes h_{J_2'}.
\]
In other words, $S^{i_1j_1i_2j_2}(\pair{f}{h_{J_2}}_2\otimes h_{J_2})$ only lives on the span of $\{h_{J_2'}:J_2'^{(j_2)}=J_2^{(i_2)}\}$. Hence, by linearity there holds
\[
\begin{split}
I&=\sum_{J_2}\sum_{J_2':J_2'^{(j_2)}=J_2^{(i_2)}}B_0\big(\ave{b}_{J_2^{(i_2)}},\pair{S^{i_1j_1i_2j_2}(\pair{f}{h_{J_2}}_2\otimes h_{J_2})}{h_{J_2'}}_2\big)\otimes h_{J_2'}\\
&=\sum_{J_2'}\left(\vphantom{\sum}\right.B_0\big(\ave{b}_{J_2'^{(j_2)}},\pair{S^{i_1j_1i_2j_2}(\sum_{J_2:J_2^{(i_2)}=J_2'^{(j_2)}}\pair{f}{h_{J_2}}_2\otimes h_{J_2})}{h_{J_2'}}_2\big)\left.\vphantom{\sum}\right)\otimes h_{J_2'}.
\end{split}
\]
Thus, orthogonality in the second variable implies that
\[
\begin{split}
&\|I\|_{L^2(\mathbb{R}^n\times\mathbb{R}^m)}^2\\
&=\sum_{J_2'}\|B_0\big(\ave{b}_{J_2'^{(j_2)}},\pair{S^{i_1j_1i_2j_2}(\sum_{J_2:J_2^{(i_2)}=J_2'^{(j_2)}}\pair{f}{h_{J_2}}_2\otimes h_{J_2})}{h_{J_2'}}_2\big)\|_{L^2(\mathbb{R}^n)}^2\\
&\lesssim\sum_{J_2'}\|\ave{b}_{J_2'^{(j_2)}}\|_{\text{BMO}(\mathbb{R}^n)}^2\|\pair{S^{i_1j_1i_2j_2}(\sum_{J_2:J_2^{(i_2)}=J_2'^{(j_2)}}\pair{f}{h_{J_2}}_2\otimes h_{J_2})}{h_{J_2'}}_2\|_{L^2(\mathbb{R}^n)}^2.
\end{split}
\]
Observing that $\|\ave{b}_{J_2'^{(j_2)}}\|_{\text{BMO}(\mathbb{R}^n)}\leq \ave{\|b\|_{\text{BMO}(\mathbb{R}^n)}}_{J_2'^{(j_2)}}\leq\|b\|_{\text{bmo}}$, one has
\[
\begin{split}
&\leq \|b\|_{\text{bmo}}^2\sum_{J_2'}\|\pair{S^{i_1j_1i_2j_2}(\sum_{J_2:J_2^{(i_2)}=J_2'^{(j_2)}}\pair{f}{h_{J_2}}_2\otimes h_{J_2})}{h_{J_2'}}_2\|_{L^2(\mathbb{R}^n)}^2\\
&=\|b\|_{\text{bmo}}^2\|\sum_{J_2'}\pair{S^{i_1j_1i_2j_2}(\sum_{J_2:J_2^{(i_2)}=J_2'^{(j_2)}}\pair{f}{h_{J_2}}_2\otimes h_{J_2})}{h_{J_2'}}_2\otimes h_{J_2'}\|_{L^2(\mathbb{R}^n\times\mathbb{R}^m)}^2.
\end{split}
\]
Note that the sum in the $L^2$ norm is in fact very simple:
\[
\begin{split}
&\sum_{J_2'}\pair{S^{i_1j_1i_2j_2}(\sum_{J_2:J_2^{(i_2)}=J_2'^{(j_2)}}\pair{f}{h_{J_2}}_2\otimes h_{J_2})}{h_{J_2'}}_2\otimes h_{J_2'}\\
&=\sum_{J_2}\sum_{J_2':J_2'^{(j_2)}=J_2^{(i_2)}}\pair{S^{i_1j_1i_2j_2}(\pair{f}{h_{J_2}}_2\otimes h_{J_2})}{h_{J_2'}}_2\otimes h_{J_2'}\\
&=\sum_{J_2}S^{i_1j_1i_2j_2}(\pair{f}{h_{J_2}}_2\otimes h_{J_2})=S^{i_1j_1i_2j_2}(f).
\end{split}
\]
Hence, the uniform boundedness of the $L^2\rightarrow L^2$ operator norm of dyadic shifts implies that
\[
\|I\|_{L^2(\mathbb{R}^n\times\mathbb{R}^m)}^2\lesssim\|b\|_{\text{bmo}}^2\|f\|_{L^2(\mathbb{R}^n\times\mathbb{R}^m)}^2.
\]
In order to handle II, we split it into a finite sum depending on the levels of $I_1$ upon $J_1'$, which leads to
\[
\begin{split}
II&=\sum_{k=0}^{j_1}\sum_{J_2}\sum_{J_1'}\pair{\ave{b}_{J_2^{(i_2)}}}{h_{J_1'^{(k)}}}_1h_{J_1'^{(k)}}h_{J_1'}\otimes\pair{S^{i_1j_1i_2j_2}(\pair{f}{h_{J_2}}_2\otimes h_{J_2})}{h_{J_1'}}_1\\
&=\sum_{k=0}^{j_1}\sum_{J_2}\sum_{J_1'}\beta_{J_1',k}|J_1'^{(k)}|^{-1/2}\pair{\ave{b}_{J_2^{(i_2)}}}{h_{J_1'^{(k)}}}_1h_{J_1'}\otimes\pair{S^{i_1j_1i_2j_2}(\pair{f}{h_{J_2}}_2\otimes h_{J_2})}{h_{J_1'}}_1\\
&=:\sum_{k=0}^{j_1}\sum_{J_2}B_k(\ave{b}_{J_2^{(i_2)}},S^{i_1j_1i_2j_2}(\pair{f}{h_{J_2}}_2\otimes h_{J_2})),
\end{split}
\]
where $B_k(b,f):=\sum_I\beta_{I,k}\pair{b}{h_{I^{(k)}}}\pair{f}{h_I}h_I|I^{(k)}|^{-1/2}$ is a generalized one-parameter paraproduct studied in \cite{DO}, whose $L^2$ norm is uniformly bounded by $\|b\|_{\text{BMO}}\|f\|_{L^2}$, independent of $k$ and the coefficients $\beta_{I,k}\in\{1,-1\}$.
Then one can proceed as in part I to conclude that
\[
\|II\|_{L^2(\mathbb{R}^n\times\mathbb{R}^m)}\lesssim (1+j_1)\|b\|_{\text{bmo}}\|f\|_{L^2(\mathbb{R}^n\times\mathbb{R}^m)},
\]
which together with the estimate for part I implies that
\[
\|\text{First term}\|_{L^2(\mathbb{R}^n\times\mathbb{R}^m)}\lesssim (1+j_1)\|b\|_{\text{bmo}}\|f\|_{L^2(\mathbb{R}^n\times\mathbb{R}^m)}.
\]

b) Second term.

As the second term by linearity is the same as
\[
S^{i_1j_1i_2j_2}\left(\vphantom{\sum}\right.\sum_{J_2}\sum_{I_1\subset J_1^{(i_1)}}\pair{\ave{b}_{J_2^{(i_2)}}}{h_{I_1}}_1\pair{f}{h_{J_1}\otimes h_{J_2}}h_{I_1}h_{J_1}\otimes h_{J_2}\left.\vphantom{\sum}\right),
\]
the $L^2\rightarrow L^2$ boundedness of the shift implies that it suffices to estimate the $L^2$ norm of the term inside the parentheses. Since $I_1\cap J_1\neq\emptyset$, one can further split the sum into two parts: 
\[
I:=\sum_{J_2}\sum_{I_1\subsetneq J_1},\quad II:=\sum_{J_2}\sum_{J_1\subset I_1\subset J_1^{(i_1)}}.
\]
Summing over $J_1$ first implies that
\[
\begin{split}
I&=\sum_{J_2}\sum_{I_1}\pair{\ave{b}_{J_2^{(i_2)}}}{h_{I_1}}_1\pair{f}{h_{I_1}^1\otimes h_{J_2}}h_{I_1}h_{I_1}^1\otimes h_{J_2}\\
&=:\sum_{J_2}B_0(\ave{b}_{J_2^{(i_2)}},\pair{f}{h_{J_2}}_2)\otimes h_{J_2},
\end{split}
\]
where $B_0(b,f):=\sum_I\pair{b}{h_I}\pair{f}{h_I^1}h_I|I|^{-1/2}$ is a classical one-parameter paraproduct in the first variable. Hence,
\[
\begin{split}
\|I\|_{L^2(\mathbb{R}^n\times\mathbb{R}^m)}^2&=\sum_{J_2}\|B_0(\ave{b}_{J_2^{(i_2)}},\pair{f}{h_{J_2}}_2)\|_{L^2(\mathbb{R}^n)}^2\\
&\lesssim\sum_{J_2}\|\ave{b}_{J_2^{(i_2)}}\|_{\text{BMO}(\mathbb{R}^n)}^2\|\pair{f}{h_{J_2}}_2\|_{L^2(\mathbb{R}^n)}^2\\
&\leq\|b\|_{\text{bmo}}^2\sum_{J_2}\|\pair{f}{h_{J_2}}_2\|_{L^2(\mathbb{R}^n)}^2=\|b\|_{\text{bmo}}^2\|f\|_{L^2(\mathbb{R}^n\times\mathbb{R}^m)}^2.
\end{split}
\]
For part II, note that it can be decomposed as
\[
\begin{split}
II&=\sum_{k=0}^{i_1}\sum_{J_2}\sum_{J_1}\pair{\ave{b}_{J_2^{(i_2)}}}{h_{J_1^{(k)}}}_1\pair{f}{h_{J_1}\otimes h_{J_2}}h_{J_1^{(k)}}h_{J_1}\otimes h_{J_2}\\
&=\sum_{k=0}^{i_1}\sum_{J_2}\sum_{J_1}\beta_{J_1,k}|J_1^{(k)}|^{-1/2}\pair{\ave{b}_{J_2^{(i_2)}}}{h_{J_1^{(k)}}}_1\pair{\pair{f}{h_{J_2}}_2}{h_{J_1}}_1h_{J_1}\otimes h_{J_2}\\
&=:\sum_{k=0}^{i_1}\sum_{J_2}B_k(\ave{b}_{J_2^{(i_2)}},\pair{f}{h_{J_2}}_2)\otimes h_{J_2},
\end{split}
\]
where coefficients $\beta_{J_1,k}\in\{1,-1\}$ and the $L^2$ norm of the generalized paraproduct $B_k$ is uniformly bounded as mentioned before. Therefore, the same argument as for part I shows that
\[
\|II\|_{L^2(\mathbb{R}^n\times\mathbb{R}^m)}\lesssim (1+i_1)\|b\|_{\text{bmo}}\|f\|_{L^2(\mathbb{R}^n\times\mathbb{R}^m)},
\]
which completes the discussion of the second term, and thus proves that the mixed case is bounded.
\end{proof}

The upper bound result we just proved can be extended to $\mathbb{R}^{\vec{d}}$, to arbitrarily many parameters and an arbitrary number of iterates in the commutator. To do this, consider multi-parameter singular integral operators studied in \cite{O2}, which satisfy a weak boundedness property and are paraproduct free, meaning that any partial adjoint of $T$ is zero if acting on some tensor product of functions with one of the components being $1$. And consider a little product BMO function $b\in {\text{BMO}}_{\mathcal{I}}(\mathbb{R}^{\vec{d}})$. One can then prove 

\begin{theorem}\label{upperbd_Journe}
Let us consider $\mathbb{R}^{\vec{d}}$, $\vec{d}=(d_1,\ldots ,d_t)$ with a partition $\mathcal{I}=(I_s)_{1\le s \le l}$ of $\{1,\ldots ,t\}$ as discussed before. Let $b\in {\text{BMO}}_{\mathcal{I}}(\mathbb{R}^{\vec{d}})$ and let $T_s$ denote a multi-parameter paraproduct free Journ\'e operator acting on functions defined on $\bigotimes_{k\in I_s}\mathbb{R}^{d_k}$. Then we have the estimate below
\[
\|[T_1,\ldots[T_l,b]\ldots]\|_{L^2(\mathbb{R}^{\vec{d}})\righttoleftarrow}\lesssim \|b\|_{{\text{BMO}}_{\mathcal{I}}(\mathbb{R}^{\vec{d}})}.
\]
\end{theorem}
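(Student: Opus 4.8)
(of Theorem \ref{upperbd_Journe})
The plan is to run, in parallel, the scheme of the bi-parameter estimate proved above and the dyadic bookkeeping of \cite{DO}, now with the variables grouped according to $\mathcal I$ and with an arbitrary number of iterates. Since each $T_s$ is a paraproduct free multi-parameter Journ\'e operator acting on $\bigotimes_{k\in I_s}\mathbb R^{d_k}$, the representation theorem of \cite{O2} writes it as an average over a parameter of dyadic grids of cancellative multi-parameter dyadic shifts $S_s$ of complexity $\vec i_s=(i_k)_{k\in I_s}$, $\vec j_s=(j_k)_{k\in I_s}$, carrying the coefficient $\prod_{k\in I_s}2^{-\max(i_k,j_k)}$ and with $\|S_s\|_{L^2\to L^2}\le 1$; paraproduct freeness of $T_s$ guarantees that no paraproduct terms occur and that all Haar functions in the $S_s$ are cancellative. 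By multilinearity of the iterated commutator in $(T_1,\dots,T_l)$ it then suffices to prove the uniform bound
\[
\|[S_1,\dots,[S_l,b]\dots]\|_{L^2(\mathbb R^{\vec d})\righttoleftarrow}\ \lesssim\ \Big(\prod_{s=1}^l\prod_{k\in I_s}(1+\max(i_k,j_k))\Big)\,\|b\|_{\text{BMO}_{\mathcal I}(\mathbb R^{\vec d})},
\]
as the polynomial loss is summable against the exponential decay $\prod_s\prod_{k\in I_s}2^{-\max(i_k,j_k)}$ coming from the representation.

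To prove this, expand $b$ and a test function $f$ in the multi-parameter Haar basis $h_{\vec I}=\bigotimes_k h_{I_k}$. Because the $S_s$ act on disjoint groups of variables, $[S_1,\dots,[S_l,b]\dots]f$ becomes a sum over $(\vec I,\vec J)$ of $\langle b,h_{\vec I}\rangle\langle f,h_{\vec J}\rangle$ times a signed tensor product, over the groups $s$, of one-group commutator expressions, to which the analysis of the bi-parameter proof applies group by group. In particular, within the group $I_s$ the corresponding factor vanishes unless for at least one coordinate $k\in I_s$ one has $I_k\subset J_k^{(i_k)}$; call such coordinates \emph{inside} and the remaining ones in $I_s$ \emph{outside}. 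We decompose the whole sum according to the finitely many admissible inside/outside patterns, and write $A_s\subset I_s$ for the nonempty set of inside coordinates of group $s$ and $A=\bigcup_s A_s$. For an outside coordinate $k$, $I_k$ is a strict dyadic ancestor of $J_k^{(i_k)}$, so $h_{I_k}$ is constant on $J_k$, the product $h_{I_k}h_{J_k}$ is again cancellative, and summing $\langle b,h_{\vec I}\rangle h_{I_k}$ over $I_k\supsetneq J_k^{(i_k)}$ collapses to the average $\langle b\rangle_{J_k^{(i_k)}}$ in the $k$-th variable, exactly as for the second variable in the mixed case of the bi-parameter proof. Carrying this out for every outside coordinate replaces $b$ by a function $\tilde b$ of the variables indexed by $A$.

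The crucial point is the norm control $\|\tilde b\|_{\text{BMO}_{\mathcal I|_A}(\mathbb R^{\vec d_A})}\le\|b\|_{\text{BMO}_{\mathcal I}(\mathbb R^{\vec d})}$, where $\mathcal I|_A=(A_s)_{s:A_s\ne\emptyset}$: averaging a function in one variable over a cube does not increase its product BMO norm in the remaining variables of any fixed coordinate group (Banach-space valued Minkowski inequality for the product BMO norm), and hence, taking the maximum over the admissible choices $\boldsymbol v$ with $v_s\in A_s\subset I_s$, does not increase the $\text{BMO}_{\mathcal I}$ norm either. Since every little product BMO space sits between little bmo and full product BMO, we get $\|\tilde b\|_{\text{BMO}(\mathbb R^{\vec d_A})}\lesssim\|b\|_{\text{BMO}_{\mathcal I}}$. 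Now, for each group $I_s$ the remaining structure is that of the shift $S_s$ composed with multiplication by the inside-coordinate Haar frequencies of $\tilde b$, and splitting in each inside coordinate according to the relative position of the $b$-cube, the $f$-cube and its ancestor (the subcases $I_k\subsetneq J_k$, $I_k=J_k$, $J_k\subsetneq I_k\subset J_k^{(i_k)}$ of the bi-parameter argument) expresses the contribution as a finite linear combination, of cardinality $\lesssim\prod_k(1+\max(i_k,j_k))$, of terms $S_s\circ B_{\vec k}(\tilde b,\cdot)$ or $B_{\vec k}(\tilde b,S_s(\cdot))$, where the $B_{\vec k}$ are the generalized multi-parameter dyadic paraproducts of \cite{DO}, whose $L^2\to L^2$ norm is bounded uniformly in $\vec k$ by $\|\tilde b\|_{\text{BMO}(\mathbb R^{\vec d_A})}$. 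Using that the $S_s$ are $L^2$ contractions, and handling the dependence on the outside variables by Fubini and orthogonality of the Haar basis there (reassembling $\sum S_s(\langle f,h_{J_k}\rangle\otimes h_{J_k})=S_s f$, as in the bi-parameter mixed case), one sums over the finitely many patterns and subcases and obtains the displayed estimate.

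The main difficulty is the combinatorial bookkeeping of the two previous paragraphs: the inside/outside patterns, and within them the relative-position subcases, proliferate with the number of coordinates and of iterates, and in every one of them one must verify that the operator produced is a genuine generalized paraproduct (or a contraction composed with one) and that the total number of pieces grows only polynomially in the shift complexities, so that summability against $\prod_s\prod_{k\in I_s}2^{-\max(i_k,j_k)}$ is preserved. The one genuinely new ingredient beyond \cite{DO} and the bi-parameter case above is the elementary averaging inequality for $\tilde b$ together with the embedding $\text{BMO}_{\mathcal I|_A}\subset\text{BMO}$, which is precisely what lets the mixed little product BMO norm of $b$ absorb every configuration that occurs.
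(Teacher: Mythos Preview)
Your proposal is correct and follows essentially the same approach as the paper: reduce to cancellative multi-parameter shifts via the representation theorem of \cite{O2}, run the regular/mixed case analysis of the bi-parameter proof coordinate by coordinate, and control the arising generalized paraproducts by the product BMO norm, which in turn is dominated by the little product BMO norm after averaging out the ``outside'' variables. The paper's own proof of Theorem \ref{upperbd_Journe} is only a one-paragraph sketch with the details omitted; your write-up supplies precisely the missing structure (the inside/outside dichotomy, the averaging inequality $\|\tilde b\|_{\text{BMO}_{\mathcal I|_A}}\le\|b\|_{\text{BMO}_{\mathcal I}}$, and the reassembly via orthogonality). One small organizational difference: the paper suggests handling higher iterates by first splitting the iterated commutator into single commutators ``as was done in \cite{DO}'', whereas you exploit directly the tensor factorization $[S_1,\dots,[S_l,h_{\vec I}]\dots]=\bigotimes_s[S_s,h_{I_{(s)}}]$ coming from the fact that the $S_s$ act on disjoint variable groups; these are equivalent bookkeeping choices.
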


The part of the proof that targets the Journ\'e operators proceeds exactly the same as the bi-parameter case with the multi-parameter version of the representation theorem proven in \cite{O2}. 
Certainly, as the number of parameters increases, more mixed cases will appear. However, if one follows the corresponding argument above for each variable in each case, it is not hard to check that eventually, the boundedness of the arising paraproducts is implied exactly by the little product BMO norm of the symbol. The difficulty of higher iterates is overcome in observing that the commutator splits into commutators with no iterates, as was done in \cite{DO}. We omit the details.

The assumption that the operators be paraproduct free is sufficient for our lower estimate. The general case is currently under investigation by one of the authors. 
Important to our arguments for lower bounds with Riesz transforms is the corollary below, which follows from the control on the norm of the estimate in Theorem \ref{upperbd_Journe} by an application of triangle inequality. It is a stability result for characterizing families of Journ\'e operators.

\begin{corollary}\label{lemma_perturbation}
Let for every $1\le s \le l$ be given a collection $\mathcal{T}_s=\{T_{s,j_s}\}$ of paraproduct free Journ\'e operators on $\bigotimes_{k\in I_s}\mathbb{R}^{d_k}$ that characterize $BMO_{\mathcal{I}}(\mathbb{R}^{\vec{d}})$ via a two-sided commutator estimate
$$ \|b\|_{{\text{BMO}}_{\mathcal{I}}(\mathbb{R}^{\vec{d}})}   \lesssim \sup_{\vec{j}} \|[T_{1,j_1},\ldots[T_{l,j_l},b]\ldots]\|_{L^2(\mathbb{R}^{\vec{d}})\righttoleftarrow}\lesssim \|b\|_{{\text{BMO}}_{\mathcal{I}}(\mathbb{R}^{\vec{d}})}.$$
Then there exists $\varepsilon>0$ such that for any family of paraproduct free Journ\'e operators $\mathcal{T'}_s=\{T'_{s,j_s}\}$ with characterizing constants $\|T'_{s,j_s}\|_{CZ}\le \varepsilon$, the family $\{T_{s,j_s}+T'_{s,j_s}\}$ still characterizes $BMO_{\mathcal{I}}(\mathbb{R}^{\vec{d}})$.
\end{corollary}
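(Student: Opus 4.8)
I will write $\mathcal{C}_{\vec{j}}:=[T_{1,j_1},\ldots,[T_{l,j_l},b]\ldots]$ for the unperturbed iterated commutator and $\widetilde{\mathcal{C}}_{\vec{j}}$ for the same expression with each $T_{s,j_s}$ replaced by $T_{s,j_s}+T'_{s,j_s}$. The plan is to show that the difference $\widetilde{\mathcal{C}}_{\vec{j}}-\mathcal{C}_{\vec{j}}$ has operator norm bounded by a small constant times $\|b\|_{\text{BMO}_{\mathcal{I}}(\mathbb{R}^{\vec{d}})}$, uniformly in $\vec{j}$; once this is in place, the triangle inequality converts the hypothesised lower bound for $\sup_{\vec{j}}\|\mathcal{C}_{\vec{j}}\|$ into one for $\sup_{\vec{j}}\|\widetilde{\mathcal{C}}_{\vec{j}}\|$. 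The key preliminary observation is that Theorem~\ref{upperbd_Journe} admits a homogeneous refinement: since the iterated commutator is linear in each operator slot separately, and the Calder\'on--Zygmund characterizing constants of a paraproduct-free Journ\'e operator scale linearly under $T\mapsto\lambda T$, applying Theorem~\ref{upperbd_Journe} to the normalised operators $\|T_s\|_{CZ}^{-1}T_s$ gives
\[
\|[T_1,\ldots,[T_l,b]\ldots]\|_{L^2(\mathbb{R}^{\vec{d}})\righttoleftarrow}\le C_0\,\|b\|_{\text{BMO}_{\mathcal{I}}(\mathbb{R}^{\vec{d}})}\prod_{s=1}^{l}\|T_s\|_{CZ},
\]
with $C_0$ depending only on $\vec{d}$ and the structural (regularity) parameters, for any paraproduct-free Journ\'e operators $T_s$.

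With this in hand I would expand $\widetilde{\mathcal{C}}_{\vec{j}}$ by multilinearity: setting $T^0_{s,j_s}:=T_{s,j_s}$ and $T^1_{s,j_s}:=T'_{s,j_s}$,
\[
\widetilde{\mathcal{C}}_{\vec{j}}=\sum_{\sigma\in\{0,1\}^{l}}[T^{\sigma_1}_{1,j_1},\ldots,[T^{\sigma_l}_{l,j_l},b]\ldots],
\]
the term $\sigma=(0,\ldots,0)$ being exactly $\mathcal{C}_{\vec{j}}$. Let $M\ge 1$ be a uniform bound for the $CZ$-norms of the operators in $\bigcup_{s}\mathcal{T}_s$ (finite in our applications) and suppose $\varepsilon\le 1$. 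Each of the $2^l-1$ remaining terms has at least one slot occupied by some $T'_{s,j_s}$ with $\|T'_{s,j_s}\|_{CZ}\le\varepsilon$ and all other slots of $CZ$-norm at most $M$, so the homogeneous estimate bounds its operator norm by $C_0 M^{l-1}\varepsilon\,\|b\|_{\text{BMO}_{\mathcal{I}}}$. Hence $\|\widetilde{\mathcal{C}}_{\vec{j}}-\mathcal{C}_{\vec{j}}\|_{L^2(\mathbb{R}^{\vec{d}})\righttoleftarrow}\le (2^l-1)C_0 M^{l-1}\varepsilon\,\|b\|_{\text{BMO}_{\mathcal{I}}}$, uniformly in $\vec{j}$.

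Now the triangle inequality gives $\|\mathcal{C}_{\vec{j}}\|\le\|\widetilde{\mathcal{C}}_{\vec{j}}\|+(2^l-1)C_0 M^{l-1}\varepsilon\,\|b\|_{\text{BMO}_{\mathcal{I}}}$, and taking the supremum over $\vec{j}$ together with the hypothesised lower bound $c\,\|b\|_{\text{BMO}_{\mathcal{I}}}\le\sup_{\vec{j}}\|\mathcal{C}_{\vec{j}}\|$ yields
\[
c\,\|b\|_{\text{BMO}_{\mathcal{I}}}\le\sup_{\vec{j}}\|\widetilde{\mathcal{C}}_{\vec{j}}\|+(2^l-1)C_0 M^{l-1}\varepsilon\,\|b\|_{\text{BMO}_{\mathcal{I}}}.
\]
Choosing $\varepsilon>0$ small enough that $(2^l-1)C_0 M^{l-1}\varepsilon\le c/2$ lets me absorb the last term into the left side and conclude $\tfrac{c}{2}\,\|b\|_{\text{BMO}_{\mathcal{I}}}\le\sup_{\vec{j}}\|\widetilde{\mathcal{C}}_{\vec{j}}\|$, the desired lower bound for the perturbed family. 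The companion upper bound needs nothing new: $T_{s,j_s}+T'_{s,j_s}$ is again a paraproduct-free Journ\'e operator with $CZ$-norm at most $M+1$, so Theorem~\ref{upperbd_Journe} applies verbatim to give $\sup_{\vec{j}}\|\widetilde{\mathcal{C}}_{\vec{j}}\|\lesssim\|b\|_{\text{BMO}_{\mathcal{I}}}$.

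The only step with real content is the homogeneous refinement of Theorem~\ref{upperbd_Journe} used above, and it is essentially bookkeeping: the commutator is genuinely multilinear in the operators, the $CZ$-norm is homogeneous of degree one, and one must check directly from the definition that dilates and finite sums of paraproduct-free Journ\'e operators stay in the class with the expected kernel, weak-boundedness, and cancellation constants. Granting that, the remainder is exactly the triangle-inequality/absorption scheme above and presents no obstacle of substance.
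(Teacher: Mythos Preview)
Your proposal is correct and is precisely the argument the paper has in mind: the paper states only that the corollary ``follows from the control on the norm of the estimate in Theorem~\ref{upperbd_Journe} by an application of triangle inequality,'' and your multilinear expansion of $\widetilde{\mathcal{C}}_{\vec{j}}$ together with the homogeneous refinement of Theorem~\ref{upperbd_Journe} (which the paper alludes to earlier when it notes that the upper-bound proof ``gives a control on the norm of the commutators which depends on the Calder\'on--Zygmund norm of the operators themselves'') is exactly how that sentence is meant to be unpacked. The one implicit assumption you flag---a uniform bound $M$ on the $CZ$-norms of the unperturbed operators---is not stated in the corollary but is satisfied in every application in the paper, where the collections $\mathcal{T}_s$ are finite.
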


\section{Weak Factorization}

It is well known, that theorems of this form have an equivalent formulation
in the language of weak factorization of Hardy spaces. We treat the model case $\mathbb{R}^{\vec{d}}=\mathbb{R}^{(d_1,d_2,d_3)}$ and $\text{BMO}_{(13)(2)}(\mathbb{R}^{\vec{d}})$ only for sake of simplicity. The other statements are an obvious generalization. For the corresponding collections of Riesz transforms $\mathcal{R}_{k,j_k}$  and $b\in \text{BMO}_{(13)(2)}(\mathbb{R}^{\vec{d}})$, $1\leq s\leq 3$, by unwinding the commutator one can define the operator $\Pi_{\vec{j}}$ such that
\[
\langle[R_{2,j_2},[R_{1,j_1}R_{3,j_3},b]]f,g\rangle_{L^2}=\langle b,\Pi_{\vec{j}}(f,g)\rangle_{L^2}.
\]

Consider the Banach space $L^2\ast L^2$ of all functions in $L^1(\mathbb{R}^{\vec{d}})$ of the form $f=\sum_{\vec{j}}\sum_i\Pi_{\vec{j}}(\phi_i^{\vec{j}},\psi_i^{\vec{j}})$ normed by
\[
\|f\|_{L^2\ast L^2}=\inf\{\sum_{\vec{j}}\sum_i\|\phi_i^{\vec{j}}\|_2\|\psi_i^{\vec{j}}\|_2\}
\]
with the infimum running over all possible decompositions of $f$. Applying a duality argument and the two-sided estimate in Corollary \ref{corollary_riesz} we are going to prove the following weak factorization theorem.

\begin{theorem}
$H^1_{\text{Re}}(\mathbb{R}^{(d_1,d_2)})\otimes L^1(\mathbb{R}^{d_3})+L^1(\mathbb{R}^{d_1})\otimes H^1_{\text{Re}}(\mathbb{R}^{(d_2,d_3)})$ coincides with the space $L^2\ast L^2$. In other words, for any $f\in H^1_{\text{Re}}(\mathbb{R}^{(d_1,d_2)})\otimes L^1(\mathbb{R}^{d_3})+L^1(\mathbb{R}^{d_1})\otimes H^1_{\text{Re}}(\mathbb{R}^{(d_2,d_3)})$ there exist sequences $\phi_i^{\vec{j}},\psi_i^{\vec{j}}\in L^2$ such that $f=\sum_{\vec{j}}\sum_i\Pi_{\vec{j}}(\phi_i^{\vec{j}},\psi_i^{\vec{j}})$ and $\|f\|\sim \sum_{\vec{j}}\sum_i\|\phi_i^{\vec{j}}\|_2\|\psi_i^{\vec{j}}\|_2$.
\end{theorem}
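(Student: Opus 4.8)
\textit{Proof plan.} The plan is to run the standard weak--factorization duality argument (in the spirit of Coifman--Rochberg--Weiss), with Theorem~\ref{predual} in its $\mathbb{R}^{\vec{d}}$ form supplying the predual and the two--sided bound of Corollary~\ref{corollary_riesz} supplying the only real analytic input. Write $\mathcal{H}:=H^1_{\text{Re}}(\mathbb{R}^{(d_1,d_2)})\otimes L^1(\mathbb{R}^{d_3})+L^1(\mathbb{R}^{d_1})\otimes H^1_{\text{Re}}(\mathbb{R}^{(d_2,d_3)})$, so that $\mathcal{H}^{\ast}=\text{BMO}_{(13)(2)}(\mathbb{R}^{\vec{d}})$, and recall the defining relation $\langle\Pi_{\vec{j}}(\phi,\psi),b\rangle=\langle[R_{2,j_2},[R_{1,j_1}R_{3,j_3},b]]\phi,\psi\rangle$; note also that $R_{2,j_2}$ and $R_{1,j_1}R_{3,j_3}$ commute, so this iterated commutator is exactly the one bounded in Corollary~\ref{corollary_riesz} for $\mathcal{I}=(13)(2)$.

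First I would record the easy inclusion $L^2\ast L^2\subseteq\mathcal{H}$ with $\|f\|_{\mathcal{H}}\lesssim\|f\|_{L^2\ast L^2}$. For fixed $\vec{j}$ and $\phi,\psi\in L^2$, the upper half of Corollary~\ref{corollary_riesz} says the functional $b\mapsto\langle\Pi_{\vec{j}}(\phi,\psi),b\rangle$ on $\text{BMO}_{(13)(2)}$ is bounded by $C\|\phi\|_2\|\psi\|_2\|b\|_{\text{BMO}_{(13)(2)}}$; since $\Pi_{\vec{j}}(\phi,\psi)\in L^1$ and $\mathcal{H}$ is the predual of $\text{BMO}_{(13)(2)}$, this forces $\Pi_{\vec{j}}(\phi,\psi)\in\mathcal{H}$ with $\|\Pi_{\vec{j}}(\phi,\psi)\|_{\mathcal{H}}\le C\|\phi\|_2\|\psi\|_2$. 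Summing over a decomposition of $f$ and passing to the infimum gives the norm bound; in particular $\|\cdot\|_{L^2\ast L^2}$ dominates $\|\cdot\|_{L^1}$, hence is a genuine norm, and a routine absolutely--convergent--series argument (concatenating near--optimal decompositions) shows $L^2\ast L^2$ is complete, so it is a Banach space continuously embedded in $\mathcal{H}$ via the inclusion map $\iota$.

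For the reverse inclusion and the norm equivalence I would invoke one functional--analytic fact. The adjoint $\iota^{\ast}\colon\text{BMO}_{(13)(2)}=\mathcal{H}^{\ast}\to(L^2\ast L^2)^{\ast}$ sends $b$ to $f\mapsto\langle f,b\rangle$, and unravelling the definition of the $L^2\ast L^2$ norm gives
\[
\|\iota^{\ast}b\|_{(L^2\ast L^2)^{\ast}}=\sup_{\vec{j}}\ \sup_{\|\phi\|_2=\|\psi\|_2=1}|\langle\Pi_{\vec{j}}(\phi,\psi),b\rangle|=\sup_{\vec{j}}\|[R_{2,j_2},[R_{1,j_1}R_{3,j_3},b]]\|_{L^2\to L^2}.
\]
By the lower half of Corollary~\ref{corollary_riesz} the right--hand side is $\gtrsim\|b\|_{\text{BMO}_{(13)(2)}}$, so $\iota^{\ast}$ is bounded below; by the closed range theorem $\iota$ is then surjective. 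Being a continuous bijection between Banach spaces, $\iota$ is an isomorphism by the open mapping theorem, which is exactly the assertion $\mathcal{H}=L^2\ast L^2$ with equivalent norms, i.e.\ the factorization together with $\|f\|\sim\sum_{\vec{j}}\sum_i\|\phi_i^{\vec{j}}\|_2\|\psi_i^{\vec{j}}\|_2$.

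The analytic substance is entirely in Corollary~\ref{corollary_riesz}; the step needing genuine care is the easy inclusion, namely that $\Pi_{\vec{j}}(\phi,\psi)$ lies in the Hardy space $\mathcal{H}$ and not merely in $L^1$ — equivalently, that the bounded functional it defines on $\text{BMO}_{(13)(2)}$ is weak--$\ast$ continuous. This is handled exactly as in one--parameter weak factorization: approximate $\phi,\psi$ by Schwartz functions, so that $\Pi_{\vec{j}}(\phi,\psi)$ becomes a finite combination of pointwise products of Schwartz functions and Riesz transforms thereof, and use the explicit description of $\mathcal{H}$ from the proof of Theorem~\ref{predual} to split it into a summand in $H^1_{\text{Re}}(\mathbb{R}^{(d_1,d_2)})\otimes L^1(\mathbb{R}^{d_3})$ and one in $L^1(\mathbb{R}^{d_1})\otimes H^1_{\text{Re}}(\mathbb{R}^{(d_2,d_3)})$, the norm control then coming from the $\text{BMO}$ bound above. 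If one prefers to avoid the closed range theorem, the same conclusion follows constructively: the lower bound plus Hahn--Banach gives density of $L^2\ast L^2$ in $\mathcal{H}$, and iterating a single approximation step $\|g-\Pi_{\vec{j}}(\phi,\psi)\|_{\mathcal{H}}\le\tfrac12\|g\|_{\mathcal{H}}$ with $\|\phi\|_2\|\psi\|_2\lesssim\|g\|_{\mathcal{H}}$ yields the factorization with norm control.
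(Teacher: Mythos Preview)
Your proposal is correct and follows essentially the same duality route as the paper: both use the upper half of Corollary~\ref{corollary_riesz} for the inclusion $L^2\ast L^2\hookrightarrow\mathcal{H}$ and the lower half to show that pairing with $b$ recovers the $\text{BMO}_{(13)(2)}$ norm, whence the spaces coincide. Your write-up is more careful than the paper's about the functional-analytic glue (completeness of $L^2\ast L^2$, the weak-$\ast$ continuity issue for $\Pi_{\vec j}(\phi,\psi)$, and the explicit appeal to the closed range/open mapping theorems), whereas the paper simply asserts that the dual-norm equivalence ``gives the equivalence of \dots\ norm and the $L^2\ast L^2$ norm, thus showing that the two spaces are the same.''
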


\begin{proof}
Let's first show that $L^2\ast L^2$ is a subspace of $H^1_{\text{Re}}(\mathbb{R}^{(d_1,d_2)})\otimes L^1(\mathbb{R}^{d_3})+L^1(\mathbb{R}^{d_1})\otimes H^1_{\text{Re}}(\mathbb{R}^{(d_2,d_3)})$. Recalling the remark after Theorem \ref{predual}, this is the same as to show $\forall f\in L^2\ast L^2$, $f$ is a bounded linear functional on $\text{BMO}_{(13)(2)}(\mathbb{R}^{\vec{d}})$. This follows from the upper bound on the commutators since 
\[
\langle b,\sum_{\vec{j}}\sum_i\Pi_{\vec{j}}(\phi_i^{\vec{j}},\psi_i^{\vec{j}})\rangle=\sum_{\vec{j}}\sum_i\langle[R_{2,j_2},[R_{1,j_1}R_{3,j_3},b]]\phi_i^{\vec{j}},\psi_i^{\vec{j}}\rangle.
\]
Now we are going to show
\[
\sup_{f\in L^2\ast L^2}\Big\{|\int fb|:\,\|f\|_{L^2\ast L^2}\leq1\Big\}\sim\|b\|_{\text{BMO}_{(13)(2)}}
\]
which gives the equivalence of $H^1_{\text{Re}}(\mathbb{R}^{(d_1,d_2)})\otimes L^1(\mathbb{R}^{d_3})+L^1(\mathbb{R}^{d_1})\otimes H^1_{\text{Re}}(\mathbb{R}^{(d_2,d_3)})$ norm and the $L^2\ast L^2$ norm, thus showing that the two spaces are the same.

To see this, note that the direction $\lesssim$ is trivial, and the direction $\gtrsim$ is implied by the lower bound of commutators. For any $b\in \text{BMO}_{(13)(2)}$, there exists $\vec{j}$ such that $\|b\|_{\text{BMO}_{(13)(2)}}\lesssim \|[R_{2,j_2},[R_{1,j_1}R_{3,j_3},b]]\|$. Hence, there exist $\phi,\psi\in L^2$ with norm $1$ such that
\[
\|b\|_{\text{BMO}_{(13)(2)}}\lesssim|\langle[R_{2,j_2},[R_{1,j_1}R_{3,j_3},b]]\phi,\psi\rangle|=|\langle b,\Pi_{\vec{j}}(\phi,\psi)\rangle|\leq LHS,
\]
which completes the proof.
\end{proof}

\section{Remarks about our results in $L^p$}\label{generalcase}

As mentioned before, the two-sided estimates stated in section \ref{section_riesz} and in particular Theorem \ref{theorem_riesz_choice_number} hold for all $1<p<\infty$. 
      The fact that upper estimates hold for the Riesz commutator in $L^p$ in the case where no tensor products are present is proved in \cite{LPPW} as well as \cite{LPPW2}. It stems from the fact that endpoint estimates for multi-parameter paraproducts hold for all $1<p<\infty$ \cite{MPTT1}, \cite{MPTT2}. This estimate carries over easily to tensor products of Riesz transforms or any other tensor products of operators for which we have $L^p$ estimates on the commutator: one uses $[T_{1}T_{2},b]=T_{1}[T_{2},b]+[T_{1},b]T_{2}$ to handle arising tensor products, followed by a correct use of the little product BMO norm. The argument is left as an exercise.

      The lower estimate or the necessity of the BMO condition can be derived from interpolation. In fact, suppose we have uniform boundedness of our commutators with operators running through all choices of Riesz transforms and some symbol $b$ in $L^p.$  Then  by duality, we have boundedness in $L^q$ where $1/p+1/q=1$. In fact, $[T,b]^*f=-[T^*,\bar{b}]f=-\overline{[\overline{T^*},b]\bar{f}}$ shows that the boundedness of adjoints is inherited. The same reasoning holds for iterated 
 commutators of tensor products. Thus by interpolation, the boundedness holds in $L^2$ and the symbol function $b$  necessarily belongs to the required BMO class.

%
%
%
%
%
%
%
  
   \begin{bibsection} 
 \begin{biblist}

\bib{BH}{article}{
          author={Brown, Arlen},
          author={Halmos, Paul},
          title={Algebraic properties of Toeplitz operators},
          journal={J. Reine Angew. Math.},
          volume={213},
          date={1964},
          pages={89\ndash 102},
          }

\bib{C}{article}{
	author={Carleson, Lennart},
	title={A counterexample for measures bounded on $H^p$ spaces for the bidisk},
	journal={Mittag-Leffler Rep. No. 7, Inst. Mittag-Leffler},
	year={1974}
	}

\bib{CLMS}{article}{
author={Coifman, Ronald},
author={Lions, Pierre-Louis},
author={Meyer, Yves},
author={Semmes, Stephen},
title={Compensated compactness and Hardy space},
journal={J. Math. Pures Appl.},
volume={72},
number={9},
date={1993},
pages={247 \ndash 286}
}

\bib{CF1}{article}{
    author={Chang, Sun-Yung},
    author={Fefferman, Robert},
     title={Some recent developments in Fourier analysis and $H\sp p$-theory
            on product domains},
   journal={Bull. Amer. Math. Soc. (N.S.)},
    volume={12},
      date={1985},
    number={1},
     pages={1\ndash 43},
      issn={0273-0979},
    review={MR 86g:42038},
}

\bib{CF2}{article}{
    author={Chang, Sun-Yung},
    author={Fefferman, Robert},
     title={A continuous version of duality of $H\sp{1}$ with BMO on the
            bidisc},
   journal={Ann. of Math. (2)},
    volume={112},
      date={1980},
    number={1},
     pages={179\ndash 201},
     }

\bib{CRW}{article}{
    author={Coifman, Ronald},
    author={Rochberg, Richard},
    author={Weiss, Guido},
     title={Factorization theorems for Hardy spaces in several variables},
   journal={Ann. of Math. (2)},
    volume={103},
      date={1976},
    number={3},
     pages={611\ndash 635}
    
}

\bib{CS}{article}{
author={Cotlar, Misha},
author={Sadosky, Cora},
title={The Helson-Szeg\"o theorem in $L^p$ of the bidimensional torus},
journal={Contemp. Math.},
volume={107},
date={1990},
pages={19\ndash 37}
}

\bib{DO}{article}{
    author={Dalenc, Laurent},
    author={Ou, Yumeng},
     title={Upper bound for multi-parameter iterated commutators},
     journal={arXiv:1401.5994 },
     pages={1\ndash 18}
     }

\bib{DP}{article}{
    author={Dalenc, Laurent},
    author={Petermichl, Stefanie},
     title={A lower bound criterion for iterated commutators},
     journal={J. Funct. Anal.},
     volume={266},
     number={8},
     date={2014},
     pages={5300\ndash 5320}
}

\bib{F1}{article}{
    author={Fefferman, Robert},
     title={Bounded mean oscillation on the polydisk},
   journal={Ann. of Math. (2)},
    volume={110},
      date={1979},
    number={2},
     pages={395\ndash 406},
   
}

\bib{F2}{article}{
    AUTHOR = {Fefferman, Robert},
     TITLE = {Singular integrals on product domains},
   JOURNAL = {Bull. Amer. Math. Soc. (N.S.)},
    VOLUME = {4},
      YEAR = {1981},
    NUMBER = {2},
     PAGES = {195\ndash201},
}

\bib{F3}{article}{
    author={Fefferman, Robert},
     title={Harmonic analysis on product spaces},
   journal={Ann. of Math. (2)},
    volume={126},
      date={1987},
    number={1},
     pages={109\ndash 130},
   
}

\bib{FL}{article}{
    author={Ferguson, Sarah},
    author={Lacey, Michael},
     title={A characterization of product BMO by commutators},
   journal={Acta Math.},
    volume={189},
      date={2002},
    number={2},
     pages={143\ndash 160}
}

\bib{FS}{article}{
    author={Ferguson, Sarah},
    author={Sadosky, Cora},
     title={Characterizations of bounded mean oscillation on the polydisk in
            terms of Hankel operators and Carleson measures},
   journal={J. Anal. Math.},
    volume={81},
      date={2000},
     pages={239\ndash 267}
      }

\bib{H}{article}{
author={Hyt\"onen, Tuomas},
title={The sharp weighted bound for general Calder\'on-Zygmund operators},
journal={Ann. of Math.},
volume={175},
date={2012},
number={3},
pages={1473\ndash 1506}

}

\bib{J2}{article}{
    AUTHOR = {Journ{\'e}, Jean-Lin},
     TITLE = {Calder\'on-{Z}ygmund operators on product spaces},
   JOURNAL = {Rev. Mat. Iberoamericana},
    VOLUME = {1},
      YEAR = {1985},
    NUMBER = {3},
     PAGES = {55\ndash 91}
}

\bib{J}{article}{
    author={Journ{\'e}, Jean-Lin},
     title={A covering lemma for product spaces},
   journal={Proc. Amer. Math. Soc.},
    volume={96},
      date={1986},
    number={4},
     pages={593\ndash 598}
     
}

\bib{LPPW}{article}{
    AUTHOR = {Lacey, Michael}, 
     AUTHOR = {Petermichl, Stefanie}, 
      AUTHOR = {Pipher, Jill},
         AUTHOR = {Wick, Brett},
     TITLE = {Multiparameter {R}iesz commutators},
   JOURNAL = {Amer. J. Math.},
    VOLUME = {131},
      YEAR = {2009},
    NUMBER = {3},
     PAGES = {731\ndash769}
      }

\bib{LPPW2}{article}{
    AUTHOR = {Lacey, Michael}, 
     AUTHOR = {Petermichl, Stefanie}, 
      AUTHOR = {Pipher, Jill},
         AUTHOR = {Wick, Brett},
     TITLE = {Iterated Riesz commutators: a simple proof of boundedness.},
   JOURNAL = {Contemp. Math.},
    VOLUME = {505},
      YEAR = {2010},
     PAGES = {171\ndash178}
      }

\bib{LPPW3}{article}{
    AUTHOR = {Lacey, Michael}, 
     AUTHOR = {Petermichl, Stefanie}, 
      AUTHOR = {Pipher, Jill},
         AUTHOR = {Wick, Brett},
     TITLE = {Multi-parameter Div-Curl Lemmas},
     journal={Bull. London Math. Soc.},     
     volume={10},
     date={2012},
     pages={1123 \ndash 1131}
     }

\bib{LT}{article} {
    AUTHOR = {Lacey, Michael}, 
    AUTHOR = {Terwilleger, Erin},
     TITLE = {Hankel operators in several complex variables and product
              BMO},
   JOURNAL = {Houston J. Math.},
    VOLUME = {35},
      YEAR = {2009},
    NUMBER = {1},
     PAGES = {159\ndash183}        
}

\bib{M}{article}{
     author={Martikainen, Henri},
     title={Representation of bi-parameter singular integrals by dyadic operators},
     journal={Adv. Math.},
     volume={229},
     date={2012},
     number={3},
     pages={1734\ndash 1761}
     }
     
\bib{M2}{article}{
     author={Martikainen, Henri},
     author={Orponen, Tuomas},
     title={Some obstacles in characterizing the boundedness of bi-parameter singular integrals},
     journal={arXiv:1404.2216},
pages={1\ndash 10}
}

\bib{MPTT1}{article}{
     author={ Muscalu, Camil},
     author={ Pipher, Jill},
     author={ Tao, Terence},
     author={ Thiele, Christoph },
     title={Bi-parameter paraproducts},
     journal={Acta Math.},
     volume={193},
     date={2004},
     number={2},
     pages={269\ndash 296}
     }

\bib{MPTT2}{article}{
     author={ Muscalu, Camil},
     author={ Pipher, Jill},
     author={ Tao, Terence},
     author={ Thiele, Christoph },
     title={Multi-parameter paraproducts},
 journal={Rev. Mat. Iberoam.},
     volume={22},
     date={2006},
     number={3},
     pages={963\ndash 976}
     }

\bib{Ne}{article}{
    author={Nehari, Zeev},
     title={On bounded bilinear forms},
   journal={Ann. of Math. (2)},
    volume={65},
      date={1957},
     pages={153\ndash 162}
}

\bib{O1}{article}{
author={Ou, Yumeng},
title={A $T(b)$ theorem on product spaces},
journal={arXiv:1305.1691, to appear Trans. Amer. Math. Soc.},
pages={1\ndash 45}
}

\bib{O2}{article}{
author={Ou, Yumeng},
title={Multi-parameter singular integral operators and representation theorem},
journal={arXiv:1410.8055},
pages={1\ndash 28}
}

\bib{P}{article}{
author={Petermichl, Stefanie},
title={Dyadic Shifts and a Logarithmic Estimate for Hankel Operators with Matrix Symbol},
journal={Comptes Rendus Acad. Sci. Paris},
volume={1},
number={1},
date={2000},
pages={455\ndash 460}

}

\bib{PTV}{article}{
author={Petermichl, Stefanie},
author={Treil, Sergei},
author={Volberg, Alexander},
title={Why the Riesz transforms are averages of the dyadic shifts?},
journal={Publicacions matematiques, ISSN 0214-1493},
volume={46},
number={2},
date={2002},
pages={209\ndash 228}

}

%

%
%

 \bib{U}{article}{
 author={Uchiyama, Akihito},
 title={A constructive proof of the Fefferman Stein decomposition of $BMO(\mathbb{R}^n)$.},
 journal={Acta Math.},
 volume={148},
 date={1982},
 pages={215\ndash 241}

 }

  \end{biblist} 
 \end{bibsection}

\end{document}